\newtheorem{theorem}{Theorem}[section]
\newtheorem{lemma}[theorem]{Lemma}
\newtheorem{cor}[theorem]{Corollary}
\newtheorem{proposition}[theorem]{Proposition}
\newtheorem{remark}[theorem]{Remark}
\def\E{\mathbb{E}}
\def\R{\mathbb{R}}
\def\P{\mathbb{P}}
\def\N{\mathbb{N}}
\def\1{\mathbbm{1}}
\def\D1{\frac{\partial}{\partial x}}
\newcommand{\Ee}{\mathcal E}
\newcommand{\Ff}{\mathcal F}
\newcommand{\Hh}{\mathcal H}
\newcommand{\Ii}{\mathcal I}
\newcommand{\Ss}{\mathcal S}
\newcommand{\Vv}{\mathcal V}
\newcommand{\Zz}{\mathcal Z}
\newcommand{\guillaume}[1]{\textcolor{cyan}{#1}}
\def\restriction#1{\raise-.5ex\hbox{\ensuremath|}_{#1}}
\title{Zeros of the Brownian Sheet}
\author{Keming Chen \footnote{PROB, MATH, SB, EPFL, CH-1015 Lausanne, Switzerland; keming.chen@epfl.ch} , Guillaume Woessner \footnote{ PROB, MATH, SB, EPFL, CH-1015 Lausanne, Switzerland; guillaume.woessner@epfl.ch.}}
\date{}
\begin{document}
\maketitle
\noindent
\textbf{Abstract:} In this work we firstly answer to a question raised by Khoshnevisan in \cite[Open Problem 4]{khoshnevisan2007slices} by proving that almost surely there is no projection of big enough rank changing the Hausdorff dimension of the zeros of the Brownian sheet. Secondly, we prove that almost surely for every projection whose rank isn't matching the aforementioned condition, the projection of the zero set is the entirety of the projective space.\\
\textbf{Key words:}  Brownian sheet, zeros set, Hausdorff dimension, orthogonal projection.\\
\textbf{Classification:} 60G15, 60G17, 60G60


\section{Introduction}
\subsection{Notations and definitions} 
\label{section_1}
We study some fractal properties of the zero set of the Brownian sheet.\\
For any positive integer $N$, an $N$-parameter Brownian sheet is a continuous Gaussian process $W~:~\R_+^N\to\R$ such that the finite dimensional marginal distributions of $W$ are multivariate centred normal distributions with covariance satisfying, for all $s=(s_1,\dots,s_N)$ and $t=(t_1,\dots,t_N)$ in $\R_+^N$,
\begin{align}
\label{def_BS_1d}
\E[W(t)W(s)]=\prod_{i=1}^N \min(s_k,t_k).
\end{align}
For any positive integer $d$, a $d$-dimensional, $N$-parameter Brownian sheet $W=(W_1,\dots,W_d)$ is a vector consisting of $d$ independent $N$-parameters Brownian sheets. For the sake of simplicity we will often simply write $(N,d)$-Brownian sheet, or even Brownian sheet when it does not lead to any confusion. When $N=1$, this process is the well-known standard Brownian motion in $\R^d$. Throughout this work, $W$ will denote a $d$-dimensional, $N$-parameter Brownian sheet. \\
In this paper $\Zz:=W^{-1}(0)=\{t\in\R_+^N~:~W(t)=0\}$, the set of the zeros of the Brownian sheet. We will be studying fractal properties of $\Zz$. For any Borel set $E$ of a space $\R^n$, $\dim_H(E)$ will denote the Hausdorff dimension of the set $E$, and $\mathcal{H}_s(E)$ the associated Hausdorff measures. For a further discussion of Hausdorff measures and Hausdorff dimensions, please refer to Taylor's text book \cite{taylor1986measure}. We would like to emphasize one elementary but important property of the Hausdorff dimension though, which is that for every given linear projection map $p$ on $\R^n$ and any Borel set $E\subset \R^n$ we have that $\dim_H(p(E))\leq \dim_H(E)$.\\
~\\
Moreover, one particular point worth noting before presenting our results is the following. If we define for $N=2$
$$
U(s, \cdot)=e^{-s/ 2} W\left(e^{s}, \cdot\right),
$$
then $U(s,\cdot)$ is an Ornstein-Uhlenbeck process on the Wiener space of continuous functions $\mathcal{C}(\R_+,\R^d)$, see \cite[Chapter 1]{Walsh} for more details. For $d>1$ and any $t>0$, with Fubini one can show that the set $\{s>0: U(s,t)=\overrightarrow{0}\}$ is empty almost surely, but it doesn't follow that almost surely, $\forall t>0, \ \{s>0: U(s,t)=\overrightarrow{0}\}$ is empty. This difference between an uncountable family of statements each of them holding almost surely, and the statement that almost surely every statement holds simultaneously is a big problem that we will have to deal with in all this paper. Indeed, one of the objective of this paper is to compare the Hausdorff dimensions of $\Zz$ and $p(\Zz)$ simultaneously for an uncountable family of projections $p$. While we will first do the comparison for a fixed single projection, to obtain a result for every projection $p$ simultaneously will require some additional work.\\
Finally, we will make use of the theorem below, proven in \cite[(5.14)]{ehm1981sample},
\begin{proposition}(Ehm 1981)\\
\label{Ehm}
	For a $(N,d)$-Brownian sheet $W$, 
$$\dim_H\{\Zz\}=\left(N-\dfrac{d}{2}\right)^+, \qquad as, $$
where $x^+$ denotes $\max(x,0)$ for any $x\in\R$.
\end{proposition}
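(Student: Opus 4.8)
The plan is a standard two--sided dimension estimate, carried out away from the coordinate hyperplanes (where the covariance $\prod_i t_i$ degenerates and, on the axes, $W$ vanishes identically; this is the convention under which the formula holds). Write $\gamma:=N-\tfrac d2$. Since Hausdorff dimension is stable under countable unions and is unaffected by the scaling relation $W(c\,\cdot)\overset{d}{=}c^{N/2}W(\cdot)$, it suffices to analyse $\Zz\cap I$ for the fixed box $I:=[1,2]^N$, on which each coordinate of $W(t)$ is a centred Gaussian with variance $\prod_i t_i$ bounded away from $0$ and $\infty$. One then proves $\dim_H(\Zz\cap I)\le\gamma^+$ a.s.\ and, in the interesting regime $\gamma>0$, the matching lower bound.

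For the \textbf{upper bound} I would first invoke the L\'evy--type modulus of continuity of the sheet: on $I$ there is an a.s.\ finite $C$ with $|W(t)-W(s)|\le C\,\|t-s\|^{1/2}\log(1/\|t-s\|)$ for $\|t-s\|$ small. Tiling $I$ by $\asymp\delta^{-N}$ cubes of side $\delta$, a cube $Q$ with centre $t_Q$ that meets $\Zz$ forces $|W(t_Q)|\le r_\delta:=C\sqrt\delta\,\log(1/\delta)$; since $W(t_Q)$ has a Gaussian density bounded near the origin uniformly over $t_Q\in I$, this has probability $\le c\,r_\delta^{\,d}$ (after a harmless truncation of $C$ below a large constant). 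Hence, for $s>\gamma$, the expected value of the $\delta$--approximate $s$-dimensional Hausdorff sum of $\Zz\cap I$ is at most $c\,\delta^{-N}r_\delta^{\,d}\,\delta^{s}=c'\,\delta^{\,s-\gamma}\big(\log(1/\delta)\big)^{d}$, which tends to $0$ with $\delta$; since the left side is monotone in $1/\delta$, this forces $\Hh_s(\Zz\cap I)=0$ a.s., i.e.\ $\dim_H(\Zz\cap I)\le\gamma$ a.s. When $\gamma\le0$ the same computation, valid for every $s>0$, gives dimension $0$.

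For the \textbf{lower bound} ($\gamma>0$) I would produce a Frostman measure on $\Zz\cap I$ as a local time at level $0$: put $\mu_\varepsilon(A):=(2\varepsilon)^{-d}\int_A\1_{\{|W(t)|<\varepsilon\}}\,dt$ and show $\mu_\varepsilon\to\mu$ in $L^2$ for some random measure $\mu$ carried by $\Zz\cap I$, with $\E[\mu_\varepsilon(I)]\to\int_I p_t(0)\,dt>0$, $p_t$ the density of $W(t)$. The quantitative input is the (sectorial) local nondeterminism of the Brownian sheet: a direct telescoping estimate on $\prod_i\min(s_i,t_i)$ shows $\mathrm{Var}\big(W_1(u)\mid W_1(t)\big)\ge c\sum_i|u_i-t_i|\asymp c'\|t-u\|$ for $t,u\in I$, whence the joint density of $(W(t),W(u))$ at $(0,0)$ is $\le c''\|t-u\|^{-d/2}$. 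This gives $\E[\mu_\varepsilon(I)^2]\le\iint_{I\times I}c''\|t-u\|^{-d/2}\,dt\,du<\infty$ (finite precisely because $d/2<N$), so by Paley--Zygmund $\mu(I)>0$ with positive probability, and for each $s<\gamma$,
\[
\E\iint\|t-u\|^{-s}\,\mu(dt)\,\mu(du)\;\le\;c''\!\iint_{I\times I}\|t-u\|^{-s-d/2}\,dt\,du\;<\;\infty .
\]
By the energy form of Frostman's lemma, $\dim_H(\Zz\cap I)\ge s$ on $\{\mu(I)>0\}$ for every $s<\gamma$; letting $s\uparrow\gamma$ and upgrading ``with positive probability'' to ``almost surely, given that the zero set is non-empty'' via a $0$--$1$ argument yields $\dim_H(\Zz\cap I)\ge\gamma$ a.s.

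The modulus of continuity, the moment bookkeeping and the local nondeterminism bound are all fairly routine. The \textbf{main obstacles} are, first, the \emph{non-degeneracy} of the occupation measure $\mu$ -- one must show the (interior) zero set is non-empty with positive probability and then deploy a suitable $0$--$1$ law, built from the independence and scaling structure of $W$, to promote ``$\dim_H=\gamma$ with positive probability'' to ``almost surely'' -- and, second, stitching the local estimates on boxes $I$ into a statement about all of $\R_+^N$ while keeping track of the behaviour near the coordinate hyperplanes. This is precisely what is carried out in \cite{ehm1981sample}.
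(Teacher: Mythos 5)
The paper does not actually prove this proposition: it is imported as a black box from Ehm's paper (the citation to \cite[(5.14)]{ehm1981sample}), where the result is obtained via the existence and positivity of local times of the sheet. Your sketch therefore supplies an argument the paper never gives, along the now-standard route: for the upper bound, a $\delta$-cube covering of $[1,2]^N$ combined with the modulus of continuity and the uniformly bounded Gaussian density of $W(t_Q)$, giving an expected $s$-dimensional covering sum of order $\delta^{s-(N-d/2)}(\log(1/\delta))^d$; for the lower bound, the occupation measure $\mu_\varepsilon$, the sectorial local nondeterminism bound $\mathrm{Var}(W_1(u)\mid W_1(t))\geq c\sum_i|u_i-t_i|$, and the energy/Frostman criterion with the correct threshold $s+d/2<N$. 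The exponent bookkeeping is right, the monotonicity argument forcing $\Hh_s(\Zz\cap I)=0$ is sound, and your caveat about excluding the coordinate hyperplanes (on which $W$ vanishes identically, so the formula would otherwise fail for $d\geq 2$) is exactly the convention under which the statement is true.

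The one step of your outline that is genuinely thinner than a proof is the upgrade of the lower bound from positive probability to almost sure. Paley--Zygmund only yields $\P(\mu(I)>0)>0$ for a fixed box, and $\{\dim_H(\Zz\cap I)\geq s\}$ is not a tail event in any obvious filtration, so the ``$0$--$1$ argument built from independence and scaling'' needs to be made concrete: the standard fixes are either ergodicity of the multiparameter scaling (Ornstein--Uhlenbeck) action applied to the dimension of the global zero set, or Ehm's original mechanism of positivity plus continuity of the local time at level $0$ (which, incidentally, is also the engine re-used in Section 4 of this paper to get simultaneous statements). One also needs the small but necessary verification that the limit measure $\mu$ is carried by $\Zz\cap I$. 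These gaps are fillable by known arguments, so your approach does work; but as written the almost-sure statement, which is the whole point of quoting Ehm rather than a positive-probability version, is asserted rather than proved.
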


\subsection{Literature and main results}
In \cite{ehm1981sample} the author proved that for the Brownian motion (i.e. when $N=1$) if $d\geq 2$, almost every path does not have any zeros. In \cite{kakutani1944131} it is shown that almost every paths were self-avoiding if and only if $d\geq 4$. Those works has been improved using the notion of "quasi-everywhere" which is stronger than "almost-everywhere" : a property $P$ concerning the paths $\omega$ is said to hold quasi-everywhere if the capacity of the set $\{\omega~:~\omega \text{ has not the property }P\}$ is null (see \cite{fukushima1984basic} for more details on the capacity). If $P$ holds quasi-everywhere, then it holds almost-everywhere. In \cite{lyons1986critical} the author showed that quasi every paths is self-avoiding if and only if $d\geq 6$, and in \cite{fukushima1984basic} (see also \cite{orey1973sample}) the author showed that quasi every path has no zero if and only if $d\geq 4$. In comparison,\cite{kono19844} showed that the capacity of the set of paths approaching arbitrarily close to 0 is positive if $d\leq 4$.\\
As said above, the set of zeros $\Zz$ has been widely studied in the literature. Also, the links between the Hausdorff dimensions of a set and its projection into a given direction has been determined in \cite{marstrand1954dimension} where the author proved that in dimension $2$ the Hausdorff dimension of a Borel set $E$ is unchanged by almost any orthogonal projection. This result led Khoshnevisan in \cite{khoshnevisan2007slices} to prove that, if $N=2$ for any orthogonal projection $p$ onto an axis, the set $\Zz$ and the set $p(\Zz)$ almost surely share the same Hausdorff dimension. Our main objective is to study a generalization of this theorem, which was stated as an Open Problem 4 in the same Khoshnevisan's work \cite{khoshnevisan2007slices}. It questioned if there existed any (necessarily random) direction onto which the projection of the set $\Zz$ was not equal to $\dim_{H}(\Zz)$. The following theorem answers Open Problem 4.
\begin{theorem}[Open problem]
\label{thm0}
For $N=2$ and $d=2 \text{ or } 3$, almost surely, for any non-zero linear map $p~:~ \R^2\rightarrow \R$, 
\begin{align}
\label{thm0_eq}
dim_H[p(\Zz)]=\dim_H(\mathcal{Z}).
\end{align}
\end{theorem}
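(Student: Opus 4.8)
\emph{Proof idea.} The plan is to establish the lower bound $\dim_H[p_\theta(\Zz)]\ge \dim_H(\Zz)$ for all directions simultaneously; the matching upper bound is the Lipschitz property recalled in the introduction, and since rescaling $p$ changes neither side, I may take $p=p_\theta$ to be the orthogonal projection of $\R^2$ onto the line $\R(\cos\theta,\sin\theta)$, with $\theta\in[0,\pi)$. By Proposition~\ref{Ehm}, $\dim_H(\Zz)=2-\tfrac d2\in\{1,\tfrac12\}$ since $d\in\{2,3\}$. The whole difficulty is the passage from a single $\theta$ (or almost every $\theta$, the Marstrand-type statement) to all $\theta$ at once; I would attack it by the energy method with a sufficiently explicit measure on $\Zz$ and quantitative control of its projections.

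First I would construct the measure. As $d<2N=4$, $W$ admits a local time at level $0$; integrating it in the time variable over a fixed box $Q\Subset(0,\infty)^2$ gives a random measure $\Ll$ carried by $\Zz\cap Q$, finite and (on a positive-probability event, upgraded to a.s. by a zero--one law for the sheet, using Proposition~\ref{Ehm}) non-trivial. The crucial input is the classical second-moment identity and bound
\[
\E\!\left[\iint\|t-u\|^{-\sigma}\,\Ll(dt)\,\Ll(du)\right]\ \le\ C_\sigma\iint_{Q\times Q}\|t-u\|^{-\sigma-\frac d2}\,dt\,du\ <\ \infty\qquad\Bigl(\sigma<2-\tfrac d2\Bigr),
\]
the exponent $\tfrac d2$ reflecting that on $Q$ the conditional variance of $W(u)$ given $W(t)=0$ is of order $\|t-u\|$ --- near a point of $Q$ the sheet is a sum of two independent one-parameter Brownian motions. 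Setting $\mu_\theta:=(p_\theta)_*\Ll$, a measure on $p_\theta(\Zz\cap Q)\subseteq p_\theta(\Zz)$ of total mass $\Ll(Q)$, Frostman's energy criterion reduces the theorem to: almost surely, for every $\theta$ and every $\sigma$ in a fixed sequence $\sigma_k\uparrow 2-\tfrac d2$,
\[
\int_{\R^2}|p_\theta(v)|^{-\sigma}\,\rho(dv)\ <\ \infty ,\qquad \rho:=\bigl((t,u)\mapsto t-u\bigr)_*(\Ll\otimes\Ll),
\]
a finite measure on the bounded set $Q-Q$. Decomposing dyadically in $|p_\theta(v)|$ and using $\rho(\{|p_\theta(v)|\le r\})\le\Ll(Q)\sup_x\Ll(\{t\in Q:\ |p_\theta(t)-x|\le r\})$, everything comes down to a \emph{uniform strip estimate}: almost surely, for every $\beta<2-\tfrac d2$ there is $C_{\omega,\beta}<\infty$ with
\[
\sup_{\theta,\,x}\ \Ll\bigl(\{t\in Q:\ |p_\theta(t)-x|\le 2^{-j}\}\bigr)\ \le\ C_{\omega,\beta}\,2^{-j\beta}\qquad(j\ge1).
\]
This is the quantitative form of the assertion that $\Zz$ carries no linear structure: every thin strip, whatever its direction, meets $\Zz$ in a set of small local time.

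To prove the uniform strip estimate I would use high moments. For a strip $T=\{t\in Q:\ |p_\theta(t)-x|\le2^{-j}\}$ and $m\in\N$, the occupation-density moment formula and a bound on the Gaussian joint density give
\[
\E\bigl[\Ll(T)^m\bigr]=\int_{T^m}p_{W(t_1),\dots,W(t_m)}(0)\,dt\ \le\ C^m\int_{T^m}\prod_{i=2}^m\Bigl(\min_{l<i}\|t_i-t_l\|\Bigr)^{-\frac d2}\,dt ,
\]
where the product comes from a strong local nondeterminism lower bound for the conditional variances of $W$ on $Q$. A chaining estimate for $\min_{l<i}$ together with the elementary bound $\sup_{t}\int_T\|t-u\|^{-\frac d2}\,du\le C\,j^{a}\,2^{-j(2-\frac d2)}$, with $a=1$ for $d=2$ and $a=0$ for $d=3$ --- this is precisely where the exponent $2-\tfrac d2$ and, for $d=2$, the necessity of $\beta<1$ rather than $\beta=1$ show up --- yield $\E[\Ll(T)^m]\le m!\,\bigl(C\,j^{a}\,2^{-j(2-\frac d2)}\bigr)^m$. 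Markov's inequality and optimisation over $m$ then give, for $\beta<2-\tfrac d2$, a stretched-exponential tail $\P(\Ll(T)>2^{-j\beta})\le\exp(-c\,j^{-a}\,2^{j(2-\frac d2-\beta)})$. Finally I would take a union bound over an $O(2^{Cj})$-net of strips --- $\theta$ and $x$ discretised at scale $2^{-j}$, each width-$2^{-j}$ strip lying, over the bounded box $Q$, inside a width-$2^{1-j}$ strip of the net --- sum over $j$ with Borel--Cantelli, and let $\beta\uparrow 2-\tfrac d2$. Feeding this back into the dyadic bound (for each $\sigma<2-\tfrac d2$ one picks $\beta\in(\sigma,2-\tfrac d2)$) finishes the proof.

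The main obstacle is this last step. One must first obtain the strong local nondeterminism bound for $W$ in a form that persists when the $m$ points are forced into a thin strip: the resulting near-collinearity could a priori collapse the conditional variances, but it does not, exactly because near every point of $Q$ the Brownian sheet looks like an additive Brownian motion; making this precise while keeping the anisotropy of the sheet and the $s$- and $t$-scalings under control is the technical heart. One must then carry out the moment and combinatorial bookkeeping tightly enough that the stretched-exponential tail beats the $2^{Cj}$ directions in the net, which is why all moments are needed. The case $d=2$ is the delicate one: $\dim_H(\Zz)=1$ is precisely the Hausdorff dimension of a line, so the estimates have essentially no margin and only the logarithmically corrected threshold $\beta<1$ survives.
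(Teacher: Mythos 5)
Your route is genuinely different from the paper's (which never uses local times here: it counts, for each tube $p_\theta^{-1}(I^n_i)$, the zero-carrying squares by viewing the sheet along the central line as a time-changed Brownian motion and applying hitting estimates as in Lemma \ref{lem_Davis}, then transfers coverings of $p_\theta(\Zz)$ back to coverings of $\Zz$, and gets all directions at once by a dyadic net of angles whose slightly fattened tubes absorb arbitrary tubes). Your energy/Frostman reduction to a uniform strip estimate is a reasonable alternative in spirit, and for a \emph{fixed} $\theta$ your second-moment computation indeed recovers the known statement. The genuine gap is the pivotal high-moment bound: the inequality $p_{W(t_1),\dots,W(t_m)}(0)\le C^m\prod_{i\ge 2}\bigl(\min_{l<i}\|t_i-t_l\|\bigr)^{-d/2}$ is false for the Brownian sheet. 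The sheet only satisfies the sectorial local nondeterminism of \cite{khoshnevisan.xiao}, $\mathrm{Var}\bigl(W_1(t_i)\mid W_1(t_l),\,l<i\bigr)\ge c\sum_{k=1}^{2}\min_{l<i}|t_i^k-t_l^k|$, and this sum can be far smaller than $\min_{l<i}\|t_i-t_l\|$ because different conditioning points may match different coordinates. Concretely, take the four corners of an axis-parallel square of side $\delta$ in $[1,2]^2$, $t_1=(1,1)$, $t_2=(1,1+\delta)$, $t_3=(1+\delta,1)$, $t_4=(1+\delta,1+\delta)$: since $W(t_4)=W(t_2)+W(t_3)-W(t_1)+\Delta$ with $\Delta$ the independent rectangular increment, the last conditional variance is exactly $\delta^2$, the determinant of the covariance matrix is of order $\delta^4$, and the joint density at $0$ is of order $\delta^{-2d}$, whereas your bound asserts $\delta^{-3d/2}$. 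Your own proposed remedy --- ``near a point of $Q$ the sheet looks like an additive Brownian motion'' --- cannot repair this, because additive Brownian motion has the same degeneracy in exact form (the four-corner identity holds with $\Delta=0$).

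Because only the sectorial bound is available, the moments of the local time of a strip do not obey $\E[\Ll(T)^m]\le m!\,\bigl(Cj^{a}2^{-j(2-d/2)}\bigr)^m$: grid-like configurations inside the strip (which do fit inside a non-axis-parallel strip of width $2^{-j}$) force extra factorial factors, of the type visible in the paper's own Lemma \ref{thm2_1_lemma1}, where the $2m$-th moment carries $((2m)!)^{N''}$ rather than $(2m)!$. Without the $m!\,(\cdot)^m$ growth you lose exactly the stretched-exponential tail that was supposed to beat the $2^{Cj}$ strips in your net, and this uniformity over $\theta$ is the entire content of the open problem; so as written the argument does not close. (Two smaller points: almost-sure positivity of $\Ll(Q)$ needs a real argument rather than an appeal to a zero--one law --- the paper sidesteps this by working with coverings on countably many boxes, where no measure is needed --- and your net must also control strips whose direction is only $2^{-j}$-close to a net direction, which is the analogue of the paper's factor $10$ in Step 3 and is fine, but only once the tail estimate itself is secured.)
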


\noindent As we said in the Introduction, it is noticeable that the part "almost surely" in Theorems \ref{thm0} (and below in Theorem \ref{thm1}) is before the quantification of the linear maps. We will have to prove in a first part that the result in \eqref{thm0_eq} hold for any fixed linear maps $p$ almost surely, which is not itself an obvious result, and then in a second part deduce that the results hold almost surely for all linear maps $p$ simultaneously. One should note that the contribution made in our theorem is qualitatively similar to the framework exposed in the end of Section \ref{section_1} : we go from a statement holding for a fixed projection almost surely to a statement holding almost surely for every projection.\\
\noindent Actually we will even go a bit further, and after proving Theorem \ref{thm0}, we will also prove the following generalization to any dimension of Theorem \ref{thm0} separately

\begin{theorem}
\label{thm1}
For every $N>\frac{d}{2}$, almost surely, for any orthogonal projection $p: \R^N\rightarrow \R^{N'}$,  with $N-\frac{d}{2}\leq N'<N$ where $N':=rank(p)$, 
\begin{align}
\label{thm1_eq}
dim_H[p(\Zz)]=\dim_H(\mathcal{Z}).
\end{align}
\end{theorem}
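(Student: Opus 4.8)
The plan is to prove Theorem~\ref{thm1} by combining Ehm's dimension formula (Proposition~\ref{Ehm}) with a two-step strategy that separates the "fixed projection" statement from the "uniform over all projections" statement, exactly as announced in the introduction. First, observe that the inequality $\dim_H[p(\Zz)]\leq \dim_H(\Zz)$ is automatic for any linear projection, so the entire content is the lower bound $\dim_H[p(\Zz)]\geq N-\frac{d}{2}$ whenever $N'=\mathrm{rank}(p)\geq N-\frac{d}{2}$. The natural approach is to realize $p(\Zz)$ as (containing) the zero set of an auxiliary Gaussian field on $\R^{N'}$ to which Ehm's theorem, or at least the capacity/energy lower bound behind it, applies. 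Concretely, after an orthogonal change of coordinates we may assume $p$ is the projection onto the first $N'$ coordinates; then for a fixed point $y\in\R^{N'}$ the fiber $\{z\in\R^{N-N'}: W(y,z)=0\}$ is, conditionally on suitable data, related to a $(N-N',d)$-type field, but since $N-N'\leq \frac{d}{2}$ the fibers are typically empty, so this slicing must be organized more cleverly — the cleaner route is to directly build a Frostman measure on $p(\Zz)$.

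The core of the fixed-projection step is therefore a second-moment / energy argument. I would fix a large box, introduce for small $\varepsilon>0$ the approximate occupation measure $\mu_\varepsilon$ on $\R^{N'}$ given by pushing forward under $p$ the normalized restriction of Lebesgue measure to $\{t: |W(t)|<\varepsilon\}$, appropriately rescaled by $\varepsilon^{-d}$. One then shows $\E[\mu_\varepsilon(\text{box})]$ stays bounded below and $\E[\mu_\varepsilon(\text{box})^2]$ stays bounded, using the explicit Gaussian density bounds for $(W(s),W(t))$ that already underlie the proof of Proposition~\ref{Ehm}; passing to a weak limit $\mu$ supported on $p(\Zz)$ and estimating its $\gamma$-energy for every $\gamma<N-\frac{d}{2}$ gives, via Frostman's lemma, $\dim_H[p(\Zz)]\geq N-\frac{d}{2}$ almost surely. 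This yields, for each fixed $p$, that \eqref{thm1_eq} holds almost surely. The delicate points here are the local nondeterminism–type bounds needed to control the correlations $\E[\mu_\varepsilon(A)\mu_\varepsilon(B)]$ after projection, since projecting can create new near-collisions; I expect one must use the product structure of the sheet's covariance to get sharp enough kernel estimates.

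The second step upgrades "for each $p$, almost surely" to "almost surely, for every $p$". Here I would exploit monotonicity and a covering/continuity argument in the Grassmannian of projections of rank $N'$ (which is compact). The cheap observation is that if $p_1$ and $p_2$ have rank $N'$ and differ slightly, $p_1(\Zz)$ and $p_2(\Zz)$ are related by a near-isometry on the relevant compact region, and Hausdorff dimension is bi-Lipschitz invariant, so in principle one wants a modulus-of-continuity statement; but dimension is not continuous in $p$ in general, so instead I would run the energy estimate of Step~1 \emph{uniformly} in $p$: build a single random measure construction whose moment bounds are uniform over $p$ in a fixed compact family, take a countable dense set of projections, get the lower bound simultaneously on that dense set almost surely, and then pass to arbitrary $p$ by approximating the Frostman measure itself (its pushforwards under $p_n\to p$ converge weakly and the energy is lower semicontinuous). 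Handling the boundary case $N'=N-\frac{d}{2}$, where the target dimension is $0$ and one must argue $p(\Zz)\neq\emptyset$ rather than control a positive dimension, will likely need a separate and more careful hitting-probability estimate.

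The main obstacle, as flagged by the authors themselves in Section~\ref{section_1}, is precisely this interchange of "almost surely" with "for all $p$": a naive union bound fails because there are uncountably many projections, and unlike the fixed-$p$ statement there is no a priori reason the exceptional null sets can be merged. I expect the crux of the real proof to be a quantitative, uniform-in-$p$ version of the second-moment estimate — essentially showing the random measures $\mu$ can be chosen to depend measurably and "Lipschitz-ly enough" on $p$ so that a single good event of full probability controls all of them — together with the separate boundary-case hitting argument when $N-\frac{d}{2}$ is an integer equal to $N'$.
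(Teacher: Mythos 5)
Your fixed-$p$ step (occupation measure of $\{|W|<\varepsilon\}$ pushed forward by $p$, first/second moments, Frostman) is a plausible route and the energy integral indeed converges for $\gamma<N-\tfrac d2$ thanks to the sectorial bound on conditional variances, so that part could likely be carried out; note however that it is not the paper's route, which never builds a measure on $p(\Zz)$: the paper instead proves the reverse covering inequality $\dim_H(\Zz)\leq\dim_H(p(\Zz))$ by lifting any efficient covering of $p(\Zz)$ to a covering of $\Zz$, using Lemma \ref{lemma_thm1} with $a=0$ (at most $n^k$ dyadic cubes of side $2^{-n}$ meet the near-zero set of $\tilde W$ along each fiber), and then quotes Ehm's value $N-\tfrac d2$ from Proposition \ref{Ehm}. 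A side error in your write-up: in the boundary case $N'=N-\tfrac d2$ the target dimension is $N'=N-\tfrac d2\geq 1$, not $0$; there is no case of Theorem \ref{thm1} that reduces to a mere non-emptiness/hitting statement (that situation is Theorem \ref{thm2}, with $N'<N-\tfrac d2$).

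The genuine gap is your Step 2, i.e.\ exactly the point the theorem is about. Getting the lower bound a.s.\ for each $p$ in a countable dense family and then invoking weak convergence $p_{n*}\nu\to p_*\nu$ together with lower semicontinuity of the energy only yields $I_\gamma(p_*\nu)\leq\liminf_n I_\gamma(p_{n*}\nu)$, which is useless unless you have a \emph{locally uniform} bound on the energies along the approximating sequence; countably many statements of the form ``$I_\gamma(p_{n*}\nu)<\infty$ a.s.'' do not provide such a bound (the liminf can be infinite on a set of full measure), and pointwise in $\omega$ the map $p\mapsto I_\gamma(p_*\nu)$ is not continuous because nearby projections can create new near-collisions $|p(s)-p(t)|\ll|s-t|$. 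To close this you would need a quantitative, uniform-in-$p$ tail estimate for the energy (or for a surrogate of it) with failure probabilities small enough to be summed over an exponentially large $2^{-n}$-net of projections, plus a continuity argument to pass from the net to arbitrary $p$ — which is precisely the mechanism the paper implements, but at the level of covering numbers rather than energies: the bound \eqref{thm1_preuve_a_fixe} gives failure probabilities of order $(n!)^{N''}2^{nN''}n^{n(d-k)}$, super-exponentially small, so they survive the union over the $C2^{nN}$ subspaces of the net in \eqref{angk1}--\eqref{angk2}, and the uniform modulus of continuity of Corollary \ref{cor_parametrization} yields the inclusions \eqref{angk3}--\eqref{angk4} that transfer the covering statement from net subspaces to all subspaces simultaneously. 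Your proposal acknowledges that such a uniform quantitative estimate is needed (``Lipschitz-ly enough in $p$'') but does not supply any mechanism for it, and it is not clear that the second-moment/energy framework produces tails small enough to beat the size of the net; as written, the interchange of ``almost surely'' and ``for all $p$'' — the actual content of Theorem \ref{thm1} — remains unproved.
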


\begin{remark}~
\label{remark}
\begin{itemize}
\item[$\bullet$] Of course, the result in Theorem \ref{thm0} is contained in Theorem \ref{thm1}. The reason why we chose to prove the former is to introduce some notions and arguments in a simpler framework than the latter. The main difficulty we will encounter in Theorem \ref{thm1} specifically is that Remark \ref{remark_renormalization} below does not hold anymore in higher dimensions (see \cite{cairoli-walsh}), such that we have to introduce a new parametrization in Remark \ref{remark_parametrization} instead.
\item We do not consider the case $N\leq \frac{d}{2}$, because otherwise we have immediately with Proposition \ref{Ehm} that $\dim_H(p(\Zz))\leq\dim_H(\Zz)=0$.
\item Similarly, we do not consider the case $N>\frac{d}{2}$ and $N'<N-\frac{d}{2}$, because otherwise we have with Proposition \ref{Ehm} that
$$\dim_\mathcal{H}(p(\mathcal{Z})) \leq N' < N-\dfrac{d}{2} = \dim_\mathcal{H}(\mathcal{Z}).$$
\item With Ehm's Proposition \ref{Ehm}, we can explicitly compute the value of $\dim_H(p(\Zz))$ in \eqref{thm0_eq} and \eqref{thm1_eq}.
\item To prove Theorem \ref{thm1} we will need to prove an intermediate result given in \eqref{prop_kaufman_1_cor}, which states that the Brownian sheet when $N\leq \frac{d}{2}$ doubles the Hausdorff dimensions, and generalizes results from Kaufman in \cite{kaufman1969propriete} for $N=1$ and Mountford in \cite{mountford1989uniform} for N=2. 
\end{itemize} 
\end{remark}

\noindent Now we want to study the case $N>\frac{d}{2}$ and $N'<N-\frac{d}{2}$ . By Remark \ref{remark}, the result from Theorem \ref{thm1} cannot still hold. Although, it turns out that we can prove the following theorem, which means that \textit{simultaneously} for every projection, every point of the projective space is a projected zero.
\begin{theorem}
\label{thm2}
    For every $N>\frac{d}{2}$, almost surely, for any orthogonal projection $p: \R^N\rightarrow \R^{N'}$,  with $N'<N-\frac{d}{2}$ where $N':=rank(p)$,
    \begin{align}
        \label{thm2_eq}
        p(\Zz)=\R_+^{N'}.
    \end{align}
\end{theorem}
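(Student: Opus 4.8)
The goal is to show that when $N' < N - \frac d2$, almost surely \emph{every} rank-$N'$ orthogonal projection $p$ sends $\Zz$ onto the full positive orthant $\R_+^{N'}$. Equivalently, fixing a point $y \in \R_+^{N'}$ and a projection $p$, the fiber $p^{-1}(y) \cap \R_+^N$ is an affine subspace of dimension $N - N'$, and we must show that $W$ vanishes somewhere on it. The restriction of $W$ to such an affine subspace is a Gaussian field on $\R^{N-N'}$, and since $N - N' > \frac d2$, one expects this restricted field to have a nonempty, indeed positive-dimensional, zero set — this is exactly the regime covered by Ehm's Proposition \ref{Ehm} in spirit. So the plan has three stages: (i) for a fixed $p$ and a fixed rational point $y$, show $\P[\,y \in p(\Zz)\,] = 1$; (ii) upgrade from a fixed $y$ to all $y$ simultaneously (for fixed $p$), using a density/continuity argument so that $p(\Zz)$, which is closed-ish after intersecting with compacts, actually equals $\R_+^{N'}$; (iii) upgrade from a fixed $p$ to all $p$ simultaneously.

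For stage (i), I would intersect $\Zz$ with the affine fiber $L = p^{-1}(y)$. Parametrize $L \cap \R_+^N$ by an open subset of $\R^{N-N'}$; the process $t \mapsto W(\phi(t))$ for $\phi$ an affine parametrization of $L$ is a continuous centered Gaussian field in $N - N'$ parameters. One should not expect it to be itself a Brownian sheet, but the key structural input is that it is nondegenerate in the sense needed to apply the lower bound on Hausdorff dimension of its zero set: using a second-moment / capacity argument (Frostman), or more simply just the statement of Proposition \ref{Ehm} applied to an appropriate sub-sheet obtained by restricting $N-N'$ of the $N$ coordinates to lie along the fiber while freezing the others — here the hypothesis $N - N' > \frac d2$ guarantees the zero set of the restricted field is a.s.\ nonempty (in fact of dimension $(N - N') - \frac d2 > 0$). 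Actually the cleanest route: choose coordinates so that the fiber is a coordinate subspace translate; then $W$ restricted to it is, up to the deterministic scaling factors coming from \eqref{def_BS_1d} and the frozen coordinates, again an $(N-N', d)$-Brownian sheet, whose zero set is nonempty a.s.\ by Proposition \ref{Ehm}. This gives $\P[y \in p(\Zz)] = 1$ for each fixed $y$ in the (open) positive orthant of $\R^{N'}$, and then also on the boundary by continuity/monotone limits.

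Stage (ii): intersect a countable dense set of $y$'s; on the full-probability event, $p(\Zz)$ contains a dense subset of $\R_+^{N'}$. To conclude $p(\Zz) = \R_+^{N'}$ one needs $p(\Zz)$ to be closed, at least locally; since $\Zz \cap [0,R]^N$ is compact and $p$ continuous, $p(\Zz \cap [0,R]^N)$ is compact, so $p(\Zz)$ is $\sigma$-compact and its closure is captured, giving $p(\Zz) \supset \overline{\{y_n\}} \cap \text{(bounded region)}$ — one has to be a little careful that a dense point $y_n$ is hit by a zero \emph{inside} the relevant compact box, which follows by taking the box large enough and re-running stage (i) on the bounded fiber. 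Stage (iii): the family of rank-$N'$ orthogonal projections is a compact manifold (a Grassmannian-type parameter space), hence separable; fix a countable dense family $\{p_k\}$, get the statement a.s.\ for all $p_k$ and all rational $y$ simultaneously, then extend to arbitrary $p$ by a uniform-continuity argument — here one uses that $W$ restricted to compacts is uniformly continuous, so that if $p_k \to p$ then fibers of $p_k$ converge (in Hausdorff distance on compacts) to fibers of $p$, and a zero near the $p_k$-fiber can be perturbed, or more robustly one observes that each fiber of $p$ is a limit of fibers of the $p_k$ and the zero set on each fiber has positive dimension, which is stable enough to survive the limit.

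The main obstacle I expect is stage (iii), the passage to all projections simultaneously: knowing $y \in p_k(\Zz)$ for a dense sequence $p_k$ does not immediately give $y \in p(\Zz)$, because the zero witnessing $y \in p_k(\Zz)$ lives on the $p_k$-fiber, not the $p$-fiber, and there is no reason a priori that these zeros converge. The right fix is probably not to perturb individual zeros but to prove a \emph{uniform} (in $p$) version of stage (i): show that almost surely, for every $p$ in the parameter space and every rational $y$, the restricted field on the $p$-fiber has a zero, by running a single second-moment argument whose constants are uniform over the compact parameter space — essentially a uniform-modulus-of-continuity plus Frostman-lower-bound argument in the spirit of Mountford's and Kaufman's uniform-dimension-doubling results cited in Remark \ref{remark}, which is precisely the technology the authors say they develop in \eqref{prop_kaufman_1_cor}. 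That uniform estimate, rather than the pointwise fixed-$p$ statement, is the real content.
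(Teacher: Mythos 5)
There is a genuine gap, in fact two. First, the pivot of your stage (i) — ``choose coordinates so that the fiber is a coordinate subspace translate; then $W$ restricted to it is, up to deterministic scaling, again an $(N-N',d)$-Brownian sheet'' — is not available. The law of the Brownian sheet is tied to the standard axes: the covariance $\prod_k \min(s_k,t_k)$ is not rotation invariant, so for a general orthogonal projection $p$ the fiber $p^{-1}(y)$ is a genuinely tilted affine subspace and the restriction of $W$ to it is some other Gaussian field, not a rescaled Brownian sheet; Proposition \ref{Ehm} does not apply to it. (For axis-parallel fibers the theorem is trivial anyway, since $W$ vanishes on the boundary of $\R_+^N$, so the nontrivial case is exactly the one your reduction cannot reach.) Second, even granting a fixed-fiber hitting statement, you never get to probability one, and your stage (ii)/(iii) patches do not close this: the zero witnessing $y_n\in p(\Zz)$ for a dense sequence $y_n\to y$ may escape to infinity, and ``re-running stage (i) on the bounded fiber'' cannot give an almost sure statement because on a fixed compact box the sheet avoids $0$ with positive probability. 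You correctly diagnose that the real content is a uniform-in-$(y,p)$ estimate, but the proposal stops at naming it rather than producing it, and there is no obvious zero--one law to invoke for the event that a fixed fiber meets $\Zz$.

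For comparison, the paper supplies exactly these two missing ingredients through local times of $W$ restricted to the fiber. Exponential moment bounds for local-time increments (Lemmas \ref{thm2_1_lemma1} and \ref{thm2_1_lemma2}, Proposition \ref{thm2_1}) lead to joint continuity of $(s,p)\mapsto L_M(0\,;s,p)$ over the space of fibers (Lemma \ref{thm2_2_lemma}, Proposition \ref{thm2_2}); this yields, with probability bounded below, $L_{M_0}(0\,;s,p)\geq 1$ \emph{simultaneously} for all $(s,p)$ in a small ball of the parameter space, and positive local time forces the fiber to meet $\Zz$. Probability one is then obtained not by a density-of-$y$ argument but by scaling: the analogous events on the boxes $M_n=[2^{2n},2^{2n+1}]^N$ have conditional probability bounded below given $\Ff_n$, so the conditional Borel--Cantelli lemma (Lemma \ref{lemma_borel_cantelli}) shows they occur infinitely often almost surely, and compactness of the space of $(s,p)$ finishes the proof. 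The uniform second-moment/Frostman step you postponed is, in effect, this local-time continuity argument; without it (or a substitute of comparable strength) the outline does not yield the theorem.
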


\noindent The rest of this paper is organized as follows. In Section \ref{section_thm0} we will give a proof for the specific case of Theorem \ref{thm0}. In Section \ref{section_thm1} we will prove the general Theorem \ref{thm1}. In Section \ref{section_thm2} we will prove the Theorem \ref{thm2}. In the Annex, one can find some small but useful lemmas used in the proofs of Theorem \ref{thm0} and Theorem \ref{thm1}.

\newpage

\section{Proof of Theorem \ref{thm0}}
\label{section_thm0}
We will present the proof in the case $d=2$, but one can easily see that everything works the same in the case $d=3$. Also, to compute the Hausdorff dimension of a given set $A$ of a space $\R^n$, it is enough to compute the Hausdorff dimension of $A\cap C$ for every bounded hypercube of $\R^n$. Thus, for notational simplicity we will only work on the square $[1,2]^2$ of $\R^2$, \textit{ie} we will consider that the Brownian sheet is only defined on $[1,2]^2$, unless the opposite is explicitly stated. The case of the other squares of the form $[a,b]^2$ for $0<a<b$ can be done the in same way. Additionally, we can suppose without loss of generality throughout this section that $\Zz\cap[1,2]^2\neq\emptyset$, since we can always take compact subset of $\R_+^N$ big enough to intersect $\Zz$. Finally, one should note that the problems of definitions that may occur at the boundary of $[1,2]^2$ can be ignored thanks to the fact that our proof works for any bounded square $C$.\\
~\\
First we will fix $p~:~\R^2\to\R$ a linear map such that $\ker(p)\cap\dot{\R}^2_+\neq\{0\}$ (\textit{ie} such that the zeros of $p$ are the graph of an increasing linear function). Secondly we will explain how to adapt the result to the other linear maps. Finally, we will show that the result still holds for every linear map \textit{simultaneously}.

\paragraph{Notations}~\\
Let $E:=p([1,2]^2)$ and for a given integer $n>0$, we have that $E$ is a segment that we can cover it by a family of disjoint intervals of length $2^{-n}$ and denote this family of intervals as $\Ii^n:=(I_i^n)_{i}$, (one should note that $\#\Ii^n\leq c 2^{n+1}$ for a certain constant $c(p)$). Then $p^{-1}(I_i^n)$ is a "tube" in $[1,2]^2$, and we will denote the bounded line in the middle of this tube as $\Gamma^n_i$. Let $\alpha,\beta^n_i$ be the real numbers such that $\Gamma^n_i \subset \{(t_1, \alpha t_1+\beta^n_i) ~:~t_1\geq 0\}$ (one should note that $\alpha$ does not depend on $n$ nor $i$). We can partition the tube $p^{-1}(I_i^n)$ into a family of disjoint tilted squares $\Ss^n_i := (S_{ij}^n)$, and we say that a square $S_{ij}^n$ is \textit{good} if $\Zz\cap S_{ij}^n \neq \emptyset$. We also say that an interval $I^n_i$ is \textit{good} if there exists a good square $S^n_{ij}$ in $p^{-1}(I^n_i)$.\\
Moreover, $|\cdot|$ will always denote the usual euclidean norm for the space it will be defined on, and $B(x,r)$ will always denote the associated open ball centered in $x$ and of radius $r$.\\

\subsection{The Kaufman strategy}

Let's fix $p~:~\R^2\to\R$ a linear map such that $\ker(p)\cap\dot{\R}^2_+\neq\{0\}$ (\textit{ie} such that the zeros of $p$ are the graph of an increasing linear function).\\
The keystone of our proof is the following proposition, which will imply almost directly the Theorem \ref{thm0}.

\begin{proposition}
\label{Prop_Kaufman_0}
There exists a (random) rank $n_0$ such that, for any $n\geq n_0$ and any $i$, there are at most $n^7$ good squares $S_{ij}^n$ in $p^{-1}(I^n_i)$.
\end{proposition}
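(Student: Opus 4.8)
The goal is to control, tube by tube, the number of tilted squares of side $\approx 2^{-n}$ that actually meet $\Zz$. The natural strategy (Kaufman's, as the section title suggests) is a first-moment/covering argument combined with a Borel--Cantelli step. Fix a tube $p^{-1}(I_i^n)$ and the line $\Gamma_i^n$ in its middle, parametrized as $t_1\mapsto (t_1,\alpha t_1+\beta_i^n)$. Restricting the $2$-parameter sheet $W$ to this line produces a one-parameter Gaussian process $Z_i^n(t_1):=W(t_1,\alpha t_1+\beta_i^n)$; up to a smooth time change this is (essentially) a Brownian motion in $\R^d$ (here $d=2$ or $3$). A square $S_{ij}^n$ can only be good if $|Z_i^n|$ comes within $O(2^{-n})$ of $0$ somewhere on the corresponding subinterval — plus a transversal fluctuation estimate controlling how far $W$ moves off the center line $\Gamma_i^n$ across the width $2^{-n}$ of the tube, which is again $O(2^{-n})$ up to logarithmic factors by the modulus of continuity of the sheet. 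So "good square" $\implies$ "the $d$-dimensional Brownian-type path $Z_i^n$ enters a ball of radius $C2^{-n}\sqrt{\log}$ around the origin on an interval of length $2^{-n}$."

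\medskip
\noindent\textbf{Key steps.} First I would record the one-parameter reduction and the transversal estimate: with overwhelming probability (after a random rank $n_0$, uniformly in $i$ and $j$), $|W(s)-W(t)|\le C 2^{-n}\sqrt{n}$ for all $s,t$ in a common square of $\Ss_i^n$, using Gaussian tail bounds and a union bound over the $O(2^{n})$ intervals and $O(2^{n})$ squares-per-tube. Second, I would estimate $\P[\,|Z_i^n| \text{ hits } B(0,r) \text{ on an interval of length } \delta\,]$ for $r\asymp 2^{-n}\sqrt n$ and $\delta\asymp 2^{-n}$: for a $d$-dimensional Brownian motion started at a typical point $x$ on $[1,2]$ (where $|W|$ is of order $1$), the probability of reaching within $r$ of the origin in time $\delta$ is, by the hitting estimates for small balls, of order $r^{d-2}$-type when $d\ge 3$ and $\log$-type when $d=2$ — in any case a quantity that is polynomially small in $2^{-n}$ once one conditions away from the (measure-zero in the relevant sense) event that the path already sits near $0$. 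Here one must be a little careful and localize: partition $[1,2]$ into the $O(2^n)$ base intervals, and on each use that $W$ is "far from $0$" at the left endpoint except on a set of probability $\approx 2^{-n(d-?)}$, reducing to the far-from-origin case. Third, sum: the expected number of good squares in a fixed tube is then $\le (\#\text{squares per tube})\times(\text{per-square probability}) = O(2^n)\times O(2^{-n}\cdot\text{poly})$, which I want to be summable enough that a Borel--Cantelli over $n$ and over the $O(2^n)$ tubes gives: a.s. eventually every tube has at most, say, $n^7$ good squares. (The exponent $7$ is generous slack; the point is polynomial-in-$n$, and the $\sqrt n$'s from the modulus of continuity are what force a power of $n$ rather than a constant.)

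\medskip
\noindent\textbf{The main obstacle.} The delicate point is the per-square probability estimate and making the union bound close. A single tube has $\asymp 2^n$ squares, and there are $\asymp 2^n$ tubes, so $\asymp 2^{2n}$ squares total; to beat this by Borel--Cantelli I need the probability that a fixed square is good to be $o(2^{-2n})$ up to polynomial factors — and a crude bound "Brownian path comes within $2^{-n}$ of the origin in time $2^{-n}$" is only $O(2^{-n})$-ish, not enough. The resolution has to exploit that being \emph{good} requires $|W|$ to be small at a \emph{specific} location, and that at a generic dyadic point $W$ has modulus bounded below by roughly $2^{-n/?}$ with high probability; combining "start far" with "travel far in short time" buys the extra power of $2^{-n}$. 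Equivalently, one counts over base intervals $I_i^n$ rather than over individual squares and uses that within a single tube the good squares are not independent — the process only returns near $0$ on a short time-scale. I expect the crux of the real proof to be exactly this bookkeeping: choosing the radius $r=C2^{-n}\sqrt n$, the right conditioning to make the small-ball hitting probability genuinely of order $2^{-2n}\,\mathrm{poly}(n)$ or better, and verifying the union bound over all tubes and squares closes with room to spare, so that the final count is $\le n^7$ for all $n\ge n_0$.
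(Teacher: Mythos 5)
There is a genuine gap, and it is the very point you flag as ``the main obstacle'': your strategy is a per-square hitting estimate plus a union bound over the $\asymp 2^{2n}$ squares, and for that to close you would need the probability that a fixed square is good to be $o(2^{-2n})$. That is false. First, the scaling of your transversal estimate is off: over a square of side $2^{-n}$ the sheet fluctuates by about $n2^{-n/2}$ (Lemma \ref{lem_OreyPruitt_original}), not $C2^{-n}\sqrt{n}$, so a square can only be declared good when the value of $W$ at its center lies in a ball of radius $\approx n2^{-n/2}$, and the probability of that is $\approx (n2^{-n/2})^{d}= n^{d}2^{-nd/2}$, i.e.\ $\approx n^{2}2^{-n}$ for $d=2$. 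This is consistent with the fact that $\dim_H(\Zz)=1$, so $\Zz$ meets on the order of $2^{n}$ squares of side $2^{-n}$: the proposition does not say there are few good squares in total, it says they are not concentrated in any single tube. No first-moment computation can see this: the expected number of good squares per tube is of order a constant (times powers of $n$), so Markov gives only $\P(\text{more than }n^{7}\text{ in a fixed tube})\lesssim n^{-7}$, and the union bound over the $\asymp 2^{n}$ tubes diverges badly. Conditioning on ``starting far from $0$'' does not rescue this, because in $d=2$ the probability that the Brownian motion along the center line, started at distance of order $1$, reaches a ball of radius $2^{-n/2}$ before exiting $B(0,R)$ is only of order $1/n$ (Lemma \ref{lem_Davis}), not polynomially small in $2^{-n}$.

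The missing idea is to bound the \emph{tail of the number of returns} rather than the expected number of hits. Along the center line $\Gamma^n_i$ the sheet is, after a smooth time change, a Brownian motion $B^n_i$ in $\R^d$; define successive visit times to a ball $B(a,2^{-n/2})$, forcing consecutive visits to be separated by $2^{-n}$ in time. After each visit the process moves to distance $2^{-n/2+1}$ within time $2^{-n}$ with probability bounded below by a constant, and from there, by the strong Markov property and the explicit harmonic-function computation of Lemma \ref{lem_Davis}, it exits $B(0,R)$ before returning to the small ball with probability at least $c/n$. Hence $k$ separated returns before $T_R$ have probability at most $e^{-ck/n}$ (Lemma \ref{lemme_thm0_1}); taking $k=n^{4}$ gives the superpolynomial bound $e^{-cn^{3}}$, which is what survives the union bound over the $\asymp 2^{n+1}$ tubes and over the $n^{O(1)}$ balls of radius $2^{-n/2}$ needed to cover $B(0,n2^{-n/2})$ (Lemma \ref{lemme_thm0_2}). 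Combining with the modulus of continuity (each good square forces a visit of $B^n_i$ to $B(0,n2^{-n/2})$, and the $2^{-n}$ time separation loses at most a factor $2$) yields the uniform bound $n^{7}$ per tube. So the correct mechanism is an excursion-counting argument with a geometric tail in the number of returns, not a per-square probability small enough to beat $2^{2n}$.
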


\begin{remark}
\label{remark_renormalization}
Thanks to Lemma \ref{lem_BM_line} we know that, up to a regular renormalization of time, the Brownian sheet along any line $\Gamma^n_i\subset [1,2]^2$ is a Brownian motion. Moreover, with Lemma \ref{lem_BM_line} we can see that the renormalization that we will note here by $\varphi^n_i$ is such that $m<|\varphi'^n_i|<M$ for certain $m,M>0$ independent of $n$ and $i$. This allows us to simplify the proof of Proposition \ref{Prop_Kaufman_0} while assuming that the Brownian sheet along any line $\Gamma^n_i\subset [1,2]^2$ is indeed a Brownian motion. Thus, the reader will not be surprised if we denote the Brownian sheet along any line $\Gamma^n_i\subset [1,2]^2$ by the abuse of notation $B^n_i(t)$, where $t$ stands for a parametrization of the line $\Gamma^n_i$, and assume that it is a Brownian motion (not necessarily of initial value 0).
\end{remark}

\noindent To prove Proposition \ref{Prop_Kaufman_0}, it is natural to study the moments when the process along the line $\Gamma^n_i$ is small. Thus, it is natural to define the stopping times as follows.\\
For $n$ and $i$ fixed, and $R>1$, we define for $a\in\R^2$ by iteration on $k\in\N^\times$,
\begin{align*}
T_R &:= \inf\{t\geq 0 ~:~|B_i^n(t)|\geq R\},\\
\tau_1(a\, ;n,i) &:= \inf\{t\geq 0 ~:~|B_i^n(t)-a|\leq 2^{-n/2}\},\\
\tau_{k+1}(a\, ;n,i) &:= \inf\{t\geq \tau_k+2^{-n} ~:~|B_i^n(t)-a|\leq 2^{-n/2}\}.
\end{align*}
\noindent One should note that, when the context does not lead to any confusion, we will drop the variables when we write $\tau_k(a,n,i)$.\\
Now, before proving Proposition \ref{Prop_Kaufman_0}, we will have to state and prove two intermediate lemmas. Let us announce them.
\begin{lemma}
\label{lemme_thm0_1}
There exists a constant $c>0$ such that, for any $n\geq1$ and any line $\Gamma^n_i$,  we have that, for all $a\in\R^2$ such that $B(a,2^{-n/2})\subset B(0,R)$, and for all $k\geq 1$, 
$$\P(\tau_k(a\,;n,i) < T_R)<e^{-\frac{ck}{n}}.$$
\end{lemma}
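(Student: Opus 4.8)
The plan is to run the Kaufman-type argument foreshadowed in Remark~\ref{remark_renormalization}. Using that remark I regard $B^n_i$ as a planar Brownian motion with some random initial value $x_0 := B^n_i(0)$, all stopping times being relative to its natural filtration $(\Ff_t)_{t\ge0}$. I fix $a$ with $B(a,2^{-n/2})\subset B(0,R)$, which already forces $|a|<R$. The $\tau_k=\tau_k(a;n,i)$ are increasing stopping times (each is a hitting time following the stopping time $\tau_{k-1}+2^{-n}$), so $\{\tau_k<T_R\}\subset\{\tau_{k-1}<T_R\}$, and the whole statement will follow by iterating a one-step estimate $\P(\tau_k<T_R\mid\Ff_{\tau_{k-1}})\le 1-c_1/n$ valid on $\{\tau_{k-1}<T_R\}$, together with a rough bound for $k=1$.

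For $k=1$ I would simply note that $\{\tau_1<T_R\}\subset\{x_0\in B(0,R)\}$, since $|x_0|\ge R$ gives $T_R=0\le\tau_1$. Up to the renormalization of Remark~\ref{remark_renormalization}, $x_0$ is the value of $W$ at a point of $[1,2]^2$, hence a centred Gaussian vector of $\R^2$ whose variance stays in a fixed compact subinterval of $(0,\infty)$; consequently $\P(|x_0|\ge R)\ge\delta_R>0$ for a constant $\delta_R$ depending only on $R$, uniformly in $n,i$ and $a$, so $\P(\tau_1<T_R)\le 1-\delta_R$.

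For the one-step estimate, on $\{\tau_{k-1}<T_R\}$ one has $B^n_i(\tau_{k-1})\in\bar B(a,2^{-n/2})$ and $|B^n_i|<R$ on $[0,\tau_{k-1}]$, and I want to show that, conditionally on $\Ff_{\tau_{k-1}}$, the motion leaves $B(0,R)$ before time $\tau_k$ with probability $\ge c_1/n$. Let $A$ be the event that the increment of $B^n_i$ over $[\tau_{k-1},\tau_{k-1}+2^{-n}]$ has norm at least $3\cdot2^{-n/2}$; by the strong Markov property this increment is an $\Ff_{\tau_{k-1}}$-independent $\mathcal N(0,2^{-n}I_2)$, so $\P(A\mid\Ff_{\tau_{k-1}})=\P(|\mathcal N(0,I_2)|\ge3)=:p_0>0$, and on $A$ the point $w:=B^n_i(\tau_{k-1}+2^{-n})$ satisfies $|w-a|\ge 2\cdot2^{-n/2}$. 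On $A$, either $|w|\ge R$, in which case $T_R\le\tau_{k-1}+2^{-n}\le\tau_k$; or $|w|<R$ and I apply the strong Markov property again at $\tau_{k-1}+2^{-n}$: if the restarted motion reaches distance $R+|a|$ from $a$ before re-entering $\bar B(a,2^{-n/2})$, then, since any point at distance $R+|a|$ from $a$ has norm $\ge R$, it has left $B(0,R)$ before $\tau_k$. Because $\log|\cdot-a|$ is harmonic on $\R^2\setminus\{a\}$, this annulus-escape probability equals
\[
\frac{\log|w-a|-\log 2^{-n/2}}{\log(R+|a|)-\log 2^{-n/2}}\ \ge\ \frac{\log 2}{\log(2R)+\tfrac n2\log 2}\ \ge\ \frac{c_1}{n},\qquad c_1:=\frac{\log 2}{\log(2R)+\tfrac12\log 2}>0,
\]
using $2\cdot2^{-n/2}\le|w-a|\le|w|+|a|<R+|a|<2R$ and $n\ge1$. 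Combining the two cases through the strong Markov property gives $\P(\tau_k\ge T_R\mid\Ff_{\tau_{k-1}})\ge\frac{c_1}{n}\,\P(A\mid\Ff_{\tau_{k-1}})\ge c_1p_0/n$, and I relabel $c_1p_0$ as $c_1$.

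Iterating the one-step bound and inserting the initial step yields $\P(\tau_k<T_R)\le(1-c_1/n)^{k-1}(1-\delta_R)\le e^{-c_1(k-1)/n-\delta'_R}$ with $\delta'_R:=-\log(1-\delta_R)>0$; taking $c:=\tfrac12\min(c_1,\delta'_R)>0$ one checks that $c_1(k-1)/n+\delta'_R>ck/n$ for every $k\ge1$ and $n\ge1$, which gives the claim. The step I expect to be the genuine obstacle is the planar potential-theoretic estimate: one has to centre the comparison annulus at $a$ and take outer radius exactly $R+|a|$, so that hitting its outer sphere forces $|B^n_i|\ge R$ while the radial symmetry that makes $\log|\cdot-a|$ harmonic is preserved, and then check that the logarithmic profile really produces a lower bound of the sharp order $1/n$, uniformly in $a$ and $i$. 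The waiting time $2^{-n}$ in the definition of the $\tau_k$ is used essentially here — it is what guarantees, with probability $\ge p_0$, that the restarted motion sits at distance of order $2^{-n/2}$ from $a$ rather than on $\partial B(a,2^{-n/2})$, leaving room for the annulus bound; the other mildly delicate point is the uniform lower bound $\delta_R$, which rests on $W$ having variance bounded away from $0$ and $\infty$ on $[1,2]^2$.
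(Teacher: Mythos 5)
Your proof is correct and follows essentially the same route as the paper's: an iterated one-step estimate that combines a constant-probability displacement of order $2^{-n/2}$ over the waiting time $2^{-n}$ (the paper's constant $c_1$, your $p_0$) with the logarithmic annulus-escape bound of Lemma \ref{lem_Davis}, giving a $c/n$ chance per step of exiting $B(0,R)$ before returning to $B(a,2^{-n/2})$, and then multiplying the conditional bounds. Your two refinements — centring the comparison annulus at $a$ with outer radius $R+|a|$ so that hitting its outer sphere genuinely forces $|B^n_i|\geq R$, and bounding the $k=1$ factor by $1-\delta_R$ so the final exponential bound also covers $k=1$ — only tighten minor imprecisions in the paper's version of the same argument.
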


\begin{proof}
We choose $a\in\R^2$ such that $B(a,2^{-n/2})\subset B(0,R)$, and we compute, for $k\geq 1$,
\begin{align*}
\P(\tau_k<T_R) &= \P(\tau_k<T_R ~|~\tau_{k-1}<T_R)\P(\tau_{k-1}<T_R) + 0\\
&=\dots\\
&=\P(\tau_1<T_R)\prod_{l=2}^k \P(\tau_l<T_R~|~\tau_{l-1}<T_R).
\end{align*}
Then, we have, for $l=2,\dots,k$,
\begin{align}
\label{proof_lemme_thm0_1}
\P(\tau_l > T_R~|~\tau_{l-1}<T_R) \geq &~\P(\tau_l > T_R ~|~\tau_{l-1}<T_R,~|B^n_i(\tau_{l-1}+2^{-n}~)-a|>2^{-\frac{n}{2}+1})\\
\nonumber
\times &~\P(|B^n_i(\tau_{l-1}+2^{-n}~)-a|>2^{-\frac{n}{2}+1}~|~\tau_{l-1}<T_R).
\end{align}
For the second factor of the product \eqref{proof_lemme_thm0_1}, by the strong Markov property, we know that,
\begin{align*}
\P(|B^n_i(\tau_{l-1}+2^{-n}~)-a|>2^{-\frac{n}{2}+1}~|~\tau_{l-1}<T_R) \geq \inf_{x\in C_0} \P_x(|B^n_i(2^{-n}~)|>2^{-\frac{n}{2}+1})=:c_1>0,
\end{align*}
where $C_0$ is the square centered in $0$ of side length $2^{-n/2}$. From the fact that $\P(B(t)>2\sqrt{t})$ is independent of $t$, we have that $c_1$ is a constant not depending on $n$ nor $i$ nor $a$, but only on $R$.\\
For the first factor of the product \eqref{proof_lemme_thm0_1}, we can use Lemma \ref{lem_Davis} to compute
\begin{align*}
&\P(\tau_l > T_R ~|~\tau_{l-1}<T_R,~|B^n_i(\tau_{l-1}+2^{-n}~)-a|>2^{-\frac{n}{2}+1})\\
\geq~ &\P(\tau_l > T_R ~|~\tau_{l-1}<T_R,~|B^n_i(\tau_{l-1}+2^{-n}~)-a|=2^{-\frac{n}{2}+1})\\
=~&\dfrac{\ln 2}{\ln (R2^{n/2})}\geq \dfrac{c_2}{n},
\end{align*}
for a certain $c_2>0$ not depending on $n$ nor on $i$ nor $a$.\\
The results above implies that there exists a certain $c>0$, not depending on $n$ nor $i$ and nor $a$, such that
$$\P(\tau_k<T_R) \leq \left( 1-\dfrac{c_1c_2}{n} \right)^{k-1} \leq e^{-\frac{ck}{n}}.$$
\end{proof}
\noindent In Lemma \ref{proof_lemme_thm0_1}, we controlled the moments when the process along the line $\Gamma^n_i$ in a small square of length $2^{-n/2}$. But, in view of Lemma \ref{lem_OreyPruitt}, one could be surprised, since we are more interested in the moments when the process along the line $\Gamma^n_i$ is less or equal to $n2^{-n/2}$. This problem is solved by the next Lemma. In this view, it is natural to define the following stopping times.\\
We define by iteration on $k\in\N^\times$,
\begin{align*}
\tau_1'(n,i) &:= \inf\{t\geq 0 ~:~|B_i^n(t)|\leq n2^{-n/2}\},\\
\tau_{k+1}'(n,i) &:= \inf\{t\geq \tau_k'(n,i)+2^{-n} ~:~|B_i^n(t)|\leq n2^{-n/2}\}.
\end{align*}
One should note that, when the context does not lead to any confusion, we will allow us not to write the variables of the $\tau_k'$.

\begin{lemma}
\label{lemme_thm0_2}
Let $R>1$. Almost surely, there exists a (random) rank $n_0$ such that for all $n \geq n_0$,
$$\forall I^n_i\in\Ii^n~:~\tau_{n^7}'(n,i)\geq T_R.$$
\end{lemma}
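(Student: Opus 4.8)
The plan is to reduce the statement about the stopping times $\tau_k'$ (hitting a tube of width $n2^{-n/2}$) to the statement about the stopping times $\tau_k$ of Lemma \ref{lemme_thm0_1} (hitting a ball of radius $2^{-n/2}$), and then to close the argument with a Borel--Cantelli estimate over the family $\Ii^n$. First I would observe that $\{|B_i^n(t)|\le n2^{-n/2}\}$ is contained in the union of $O(n^2)$ balls of radius $2^{-n/2}$ centered at lattice points $a$ of a $2^{-n/2}$-grid inside $B(0,R)$ (we only need to cover $B(0,R)$ since we work before $T_R$). Hence if $\tau_{n^7}'(n,i)<T_R$, then $B_i^n$ has returned $n^7$ times, at times separated by at least $2^{-n}$, to this union of balls, so by pigeonhole at least one fixed ball $B(a,2^{-n/2})$ is revisited at least $n^7/(Cn^2)=c\,n^5$ times with the $2^{-n}$ separation, i.e. $\tau_{cn^5}(a;n,i)<T_R$ for some grid point $a$.

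Next I would apply Lemma \ref{lemme_thm0_1} with $k=cn^5$: for each fixed grid point $a$,
$$\P\big(\tau_{cn^5}(a;n,i)<T_R\big)\le e^{-c'n^5/n}=e^{-c'n^4}.$$
Taking a union bound over the $O(n^2)$ grid points $a$ and over the $\#\Ii^n\le c2^{n+1}$ intervals $I_i^n$ gives
$$\P\big(\exists I_i^n\in\Ii^n:\tau_{n^7}'(n,i)\ge T_R \text{ fails}\big)\le c2^{n+1}\cdot Cn^2\cdot e^{-c'n^4},$$
which is summable in $n$. By the Borel--Cantelli lemma, almost surely only finitely many $n$ violate the conclusion, which furnishes the random rank $n_0$. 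The one subtlety here is that Lemma \ref{lemme_thm0_1} requires $B(a,2^{-n/2})\subset B(0,R)$; this is why I take the grid points $a$ inside $B(0,R-2^{-n/2})$, and note that the portion of the tube reaching the annulus between radius $R-2^{-n/2}$ and $R$ is already covered by balls that are hit only if $T_R$ is close, which can be absorbed by the same counting at the cost of enlarging constants (or simply by using $R/2$ in place of $R$ in the covering and $R$ in the exit time).

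The main obstacle I anticipate is bookkeeping rather than conceptual: one must be careful that the "$+2^{-n}$" waiting time in the definition of $\tau_{k+1}'$ is compatible with the pigeonhole step, i.e. that $n^7$ visits to the union of balls, each pair separated by $\ge 2^{-n}$, really do force $\gtrsim n^5$ visits to a single ball that are themselves pairwise separated by $\ge 2^{-n}$ — this is immediate since consecutive visits to the whole union are already $2^{-n}$-separated, hence so are the ones landing in any fixed sub-ball. A second point requiring care is the uniformity of the renormalization: by Remark \ref{remark_renormalization} the Brownian sheet along $\Gamma_i^n$ is, after a time-change $\varphi_i^n$ with $m<|\varphi_i'^n|<M$ uniformly in $n,i$, a Brownian motion, so all estimates from Lemma \ref{lemme_thm0_1} apply with constants independent of $n$ and $i$, and the factor $2^{-n}$ separation is only distorted by the bounded factor $M$, which is harmless. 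Once these are in place the result follows, and in fact the exponent $7$ is far from optimal — any fixed power $>1$ would do — but $n^7$ is convenient for the later use of Lemma \ref{lem_OreyPruitt} in Proposition \ref{Prop_Kaufman_0}.
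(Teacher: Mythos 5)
Your proposal is correct and follows essentially the same route as the paper: cover the ball $B(0,n2^{-n/2})$ in value space by polynomially many balls of radius $2^{-n/2}$, pigeonhole the $n^7$ separated visits into many separated visits to a single covering ball, apply Lemma \ref{lemme_thm0_1}, and conclude with a union bound over the grid points and the intervals of $\Ii^n$ followed by Borel--Cantelli. The only differences are immaterial bookkeeping choices (the paper uses $n^3$ covering balls and $n^4$ visits where you use $O(n^2)$ and $cn^5$), and your caution about grid points near the boundary of $B(0,R)$ is unnecessary since the covering balls all lie near the origin.
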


\begin{proof}
We define, for $n\in\N^\times$,
$$p_n:=\P(\exists I^n_i\in\Ii^n~:~\tau_{n^7}'(n,i)<T_R).$$
We know that for $n$ big enough, the ball $B(0,n2^{-n/2})$ can be covered by $n^3$ smaller balls $B(a_r,2^{-n/2})\subset B(0,R)$. We also know that if the ball $B(0,n2^{-n/2})$ is visited more than $n^7$ times by the process $B^n_i$ before leaving $B(0,R)$, then it means that one of the balls $B(a_r,2^{-n/2})$ is visited at least $n^4$ times by the process $B^n_i$ before leaving $B(0,R)$. This allows us to write, using Lemma \ref{lemme_thm0_1} at the end
\begin{align*}
p_n &= \P(\exists I^n_i\in\Ii^n~:~\tau_{n^7}'(n,i)<T_R)\\
&\leq \P(\exists I^n_i\in\Ii^n, ~ \exists r=1,\dots, n^4 ~:~\tau_{n^3}(a_r\, ; n,i)<T_R)\\
&\leq \sum_{i\in\Ii^n}\sum_{r=1}^{n^3} \P(\tau_{n^4}(a_r\,;n,i)<T_R)\\
& \leq 2^{n+1}n^3 e^{-c\frac{n^4}{n}}.
\end{align*}
Since this is the general term of a convergent series, the result follows by Borel-Cantelli lemma.
\end{proof}
~\\
Now we are able to prove Proposition \ref{Prop_Kaufman_0}.
\begin{proof}[Proof of Proposition \ref{Prop_Kaufman_0}]
First, we should note that since, for every $n,i$, the sequence of stopping times $(\tau'_k(n,i))_k$ may miss some of the zeros in $p^{-1}(I^n_i)$ which could be located between $\tau'_k(n,i)$ and $\tau'_k(n,i)+2^{-n}$. But since in the worst case we would only miss half of the good squares, the conclusion of Proposition \ref{Prop_Kaufman_0} still holds.\\
Since Lemma \ref{lemme_thm0_2} holds for any $R>1$ and since $W$ as a continuous function is bounded on $[1,2]^2$, it is immediate to conclude that for a certain (random) rank $n_0$, for any $n\geq n_0$ and any $I^n_i$, there is at most $n^7$ good squares $S_{ij}^n$ in $p^{-1}(I^n_i)$.
\end{proof}

\subsection{Proof of Theorem \ref{thm0}}
In the previous subsection, we proved our "keystone" Proposition \ref{Prop_Kaufman_0} Now we still have to conclude our proof of Theorem \ref{thm0}. We will proceed in three steps. In Step 1, as we already said it, we will prove \eqref{thm0_eq} for a fixed linear map $p~:~\R^2\to\R$ such that $\ker(p)\cap\dot{\R}^2_+\neq\{0\}$, \textit{ie} such that the zeros of $p$ are the graph of an increasing linear function. Then, in Step 2, we will generalize \eqref{thm0_eq} for any linear map $p~:~\R^2\to\R$. Finally, in Step 3, we will prove Theorem \ref{thm0}, \textit{ie} we will show that \eqref{thm0_eq} holds simultaneously for all linear map $p~:~\R^2\to\R$.

\paragraph{Step 1}
Let's fix a linear map $p~:~\R^2\to\R$ such that $\ker(p)\cap\dot{\R}^2_+\neq\{0\}$. Obviously we have $dim_H[p(\Zz)] \leq \dim_H(\Zz)=2-\frac{2}{2}=1$ by Proposition \ref{Ehm}. The point is to show that the converse inequality is also true.\\
We fix $\alpha>\beta>\dim_H(p(\Zz))$, and we take a covering $\Ee$ of $p(\Zz)$ such that $\Hh^\beta(\Ee)<\infty$. Since the covering consists of arbitrary small cubes, we may suppose that all the coverings consists of good intervals of side length at least $n_0$, where $n_0$ is taken from Proposition \ref{Prop_Kaufman_0}. For $n\geq n_0$, we note by $e_n$ the number of good intervals $I^n_i$ of side length $2^{-n}$ in the covering $\mathcal{E}$. Thus, by Proposition \ref{Prop_Kaufman_0}, a covering of $\Zz$ is given by the reunion of at most $n^7$ good squares above each of the good interval $I^n_i$ of $\mathcal{E}$. Noting that those squares are tilted, we have to consider non-tilted squares of side length $2^{-n+1}$ instead of $2^{-n}$. This way, we have that the Hausdorff measure of $\Zz$ checks
\begin{align*}
\Hh^\alpha(\Zz) \leq \sum_{n=n_0}^\infty 2^{-\alpha(n-1)}\,n^7e_n =  2^\beta \sum_{n=n_0}^\infty\,2^{-(\alpha-\beta)(n-1)}\,n^7.2^{-\beta n}e_n
\end{align*}
Thus, if we suppose $n_0$ big enough for $2^\beta\,2^{-(\alpha-\beta)(n-1)}\,n^7<1$ for all $n\geq n_0$, we get that $\Hh^\alpha(\Zz)<\infty$ and we can conclude that \eqref{thm0_eq} is indeed almost surely true.

\paragraph{Step 2} Now, if the linear map $p~:~\R^2\to\R$ we are taking is not such that $\ker(p)\cap\dot{\R}^2_+\neq\{0\}$ (\textit{ie} not such that the zeros of $p$ are the graph of an increasing linear function). Then for every $x\in\R$, $p^{-1}(x) = \{c_1(p).x-c_2(p).t~:~t\in\R\}$, where $c_1$ and $c_2$ are constants only depending on $p$. Noting $\tilde{\Zz}:=\{(s,t)\in[1,2]\times (\frac{1}{2},1]~:~tW(s,\frac{1}{t}) = 0)\}=\{(s,\frac{1}{t})\in[1,2]^2~:~W(s,t)=0\}$, our objective in this step is to prove that, almost surely $\dim_H(p(\tilde{\Zz}))=\dim_H(\tilde{\Zz})$. This would prove our point, since we have $\dim_H(\Zz)=\dim_H(\tilde{\Zz})$ and $\dim_H(p(\Zz))=\dim_H(p(\tilde{\Zz}))$. To do so, we note that the process $(s,t)\mapsto tW(s,\frac{1}{t})$ is a $(N,d)$-Brownian sheet. Thus, we can adapt the proof of Theorem \ref{thm0} to this process. The only notable difference is that in this setting, the tubes $p^{-1}(I^n_i)$ will be curved tubes whose middle line are of the form $\{\frac{1}{c_1(p).x-c_2(p).t}~:~t\in\R\}$ for $x$ the center of $I^n_i$. Since $c_2(p) >0$ by assumption on $p$, the process along this line is still a Brownian motion up to a smooth renormalization of time, as in Remark \ref{remark_renormalization}. The rest of the proof follows as in first step.

\paragraph{Step 3} Next we have to prove Theorem \ref{thm0}, \textit{ie} we have to show that \eqref{thm0_eq} holds almost surely simultaneously for every linear map $p$. We will describe the linear map $p_\theta$ we are speaking about by the angle $\theta$ its kernel forms with the $y=0$ line.\\
~\\
First, define for every $m\in\N^\times$, $\theta_0=0$, and $\theta_j^m=2\pi\frac{j}{2^m}$, where $j=1,\dots,2^m$, and notice that for any $\theta\in [0,2\pi)$, and for any $m\in\N^\times$, there exists $j=j(\theta,m)\in \{1,\dots,2^m\}$ such that
\begin{equation}
\label{ang1}
|\theta^m_j-\theta|\leq \frac{2\pi}{2^m}.
\end{equation}
Secondly, an easy adaptation of the proof of Lemma \ref{lemme_thm0_2} shows that for any $R>1$, almost surely, there exists a (random) rank $n_0$ such that for all $n \geq n_0$,
\begin{equation}
\label{ang2}
\forall j=1,\dots,2^n, \qquad \forall I^n_i\in\Ii^n(\theta^n_j)~~:~~\tau_{n^7}'(n,i\,;\,\theta^n_j)\geq T_R.
\end{equation}
Thirdly, fix $\theta\in[0,2\pi)$ and $n\in\N^\times$, and consider any of the intervals $I^n_i$ associated to the linear map $p_\theta$. Remember that we want to control the number of good squares above the interval $I^n_i$. To do so, one can show that with $\theta^n_j$ given by \eqref{ang1}, there exists a constant $c_3>1$ independent of $\theta$, $n$ and $i$, such that, for a certain interval $I'$ of length $c_3.|I^n_i|$ where $|I^n_i|$ is the length of $I^n_i$, the tube $p_{\theta^n_j}^{-1}(I')$ overlaps entirely the tube $p_\theta^{-1}(I^n_i)$ (one should remember that we are working only on $[1,2]^2$). Moreover one can show that this $c_3$ is less than $10$. Thus, using \eqref{ang2}, we have that for any $R>1$, almost surely, there exists a (random) rank $n_0$ such that for all $n \geq n_0$,
\begin{equation}
\label{ang3}
\forall \theta\in[0,2\pi) \qquad \forall I^n_i\in\Ii^n(\theta)~~:~~\tau_{10n^7}'(n,i\,;\,\theta)\geq T_R.
\end{equation}
From \eqref{ang3}, one can conclude as in the proof of Proposition \ref{Prop_Kaufman_0} that there exists a (random) rank $n_0$ such that, for any $\theta\in [0,2\pi)$, any $n\geq n_0$ and any $i$, there is at most $10n^7$ good squares $S_{ij}^n$ in $p_\theta^{-1}(I^n_i)$. One can then end the proof of Theorem \ref{thm0} as in Step 1.

\newpage

\section{Proof of Theorem \ref{thm1}}
\label{section_thm1}
This section will be dedicated to the proof of Theorem \ref{thm1}.\\
To compute the Hausdorff dimension of a given set $A$ of a space $\R^n$, it is enough to compute the Hausdorff dimension of $A\cap C$ for every bounded hypercube of $\R^n$. Thus, for simplicity reasons we will only work on the square $[1,2]^N$ of $\R^N$, \textit{ie} we will consider that the Brownian sheet is only defined on $[1,2]^N$, unless the opposite is explicitly stated. The case of the other squares of the form $[a,b)^N$ for $0<a<b$ can be done the in same way. Additionally, we can suppose without loss of generality throughout this section that $\Zz\cap[1,2]^N\neq\emptyset$, since we can always take compact subset of $\R_+^N$ big enough to intersect $\Zz$. Finally, one should note that the problems of definitions that may occur at the boundary of $[1,2]^N$ can be ignored thanks to the fact that our proof works for any bounded cube $C$.

\paragraph{Notations}~\\
The notation $|\cdot|$ will always denote the usual euclidean norm for the space it will be defined on, and $B(a,r)$ will always denote the associated closed ball centered in $a$ and of radius $r$. Similarly, $|\cdot|_\infty$ will denote the usual $\sup$-norm on the space it will be defined on, and $B^\infty(a,r)$ will always denote the associated open ball centered in $a$ and of radius $r$.\\
Finally, the notations $C$ and $C(\cdot)$ will denote throughout this section a constant that does not depend on any relevant variable, and may change its value from one line to the other.\\
~\\
As in the proof of Theorem \ref{thm0}, we would like to prove an intermediate result analogous to Proposition \ref{Prop_Kaufman_0}. Unfortunately, the strategy followed in the proof of Proposition \ref{Prop_Kaufman_0} can't be generalized to higher dimensions because there are no well-suited equivalents for the stopping times $\tau_k(a;n,i)$ from the proof of Theorem \ref{thm0} in higher dimensions. For this reason, we have to change the approach we take, and the adaptation led us to Lemma \ref{lemma_thm1}. It turns out that proving Lemma \ref{lemma_thm1} enables us to prove directly the unpublished Proposition \ref{prop_kaufman_1}. This is why we chose to prove first Proposition \ref{prop_kaufman_1} in the next subsection with the help of Lemma \ref{lemma_thm1}, before turning back to the demonstration of Theorem \ref{thm1}.

\begin{proposition}
\label{prop_kaufman_1}
For every $N \leq \frac{d}{2}$, almost surely for every linear subspace $K\subset\R^N$ of dimension $N'':=\dim(K)\leq\frac{d}{2}$ and for every Borel set $E\subset K$, if $K\cap [1,2]^N \neq \emptyset$, then 
\begin{align}
\label{prop_kaufman_1_eq}
    2\dim_H\{E\}=\dim_H\{W(E)\}.
\end{align}
\end{proposition}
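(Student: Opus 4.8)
\textbf{Proof plan for Proposition \ref{prop_kaufman_1}.}

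The statement asserts that, when $N \leq d/2$, the Brownian sheet almost surely \emph{doubles} Hausdorff dimension, uniformly over all subspaces $K$ of dimension at most $d/2$ and all Borel subsets $E \subset K$. The plan is to prove the two inequalities $\dim_H W(E) \leq 2\dim_H E$ and $\dim_H W(E) \geq 2\dim_H E$ separately, and in each case first establish a statement for a single fixed $K$ almost surely, then upgrade to the simultaneous statement over all $K$ by a covering/discretization argument in the Grassmannian, exactly in the spirit of Step 3 in the proof of Theorem \ref{thm0}.

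For the \emph{upper bound} $\dim_H W(E) \leq 2\dim_H E$: fix $K$ and a value $\gamma > \dim_H E$; cover $E$ by balls $B(x_\ell, r_\ell)$ with $\sum r_\ell^{\gamma}$ small. On each such ball I would control the modulus of continuity of $W$ restricted to $K \cap [1,2]^N$ — since $K$ has dimension $N'' \leq N$, the restriction $W|_K$ is (after the usual linear reparametrization, as in Lemma \ref{lem_BM_line}) comparable to an $N''$-parameter Brownian sheet, which satisfies a Lévy-type uniform modulus of continuity $|W(x) - W(y)| \leq C |x-y|^{1/2}(\log(1/|x-y|))^{1/2}$ on compacts. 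Hence $W(B(x_\ell, r_\ell) \cap K)$ is contained in a ball of radius $\asymp r_\ell^{1/2}(\log(1/r_\ell))^{1/2}$ in $\R^d$, and summing $r_\ell^{\gamma/(1/2)\,\cdot\,(1/2)}$-type quantities gives $\Hh^{2\gamma + \varepsilon}(W(E)) = 0$ for every $\varepsilon > 0$; let $\gamma \downarrow \dim_H E$. This half does not even need $N \leq d/2$ and is essentially soft.

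For the \emph{lower bound} $\dim_H W(E) \geq 2\dim_H E$: this is the substance of the proposition and the place where $N \leq d/2$ is used, via the analogue of Lemma \ref{lemma_thm1} that the excerpt invokes (the generalization of Kaufman's argument, \cite{kaufman1969propriete}, and Mountford's for $N=2$, \cite{mountford1989uniform}). The strategy is to show that with probability one, for \emph{every} dyadic tilted cube $Q$ of side $2^{-n}$ in $K \cap [1,2]^N$, the image $W(Q)$ cannot be covered by fewer than $C n^{-p}\, 2^{nd/2}$ balls of radius $2^{-n/2}$, for some fixed power $p$ — i.e. a uniform (over $n$, over the cube, and eventually over $K$) lower bound on the "$2^{-n/2}$-entropy" of the images of small cubes. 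Given such a statement, one runs the standard covering argument in reverse: take any cover $\{U_m\}$ of $W(E)$ by small balls of radius $\rho_m = 2^{-n_m/2}$; each $U_m$ can meet $W(Q)$ for a cube $Q$ of side $2^{-n_m}$ only if the preimage structure is constrained, and counting forces $\sum (\mathrm{diam}\,U_m)^{2s} \gtrsim \Hh^s(E)$ up to the logarithmic correction, for any $s < \dim_H E$; letting the logarithmic factor be absorbed by passing to $2s - \varepsilon$ yields $\Hh^{2s-\varepsilon}(W(E)) > 0$. The entropy lower bound itself is obtained from a probabilistic estimate plus Borel--Cantelli: for a fixed cube $Q$, the probability that $W(Q)$ is covered by only $k$ balls of radius $2^{-n/2}$ decays fast enough in $k$ (this is where the nondegeneracy of the Gaussian law of $d$ increments — using $d \geq 2N \geq 2N''$ — enters, giving the anti-concentration/small-ball bounds needed), and there are only polynomially many dyadic cubes of side $2^{-n}$ in $[1,2]^N$, so summing over $n$ and over cubes the bad probabilities form a convergent series.

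The main obstacle, as anticipated by the authors' own remark that Proposition \ref{prop_kaufman_1} "generalizes results from Kaufman and Mountford," is the simultaneous-over-$K$ upgrade of the lower bound: one must discretize the Grassmannian $\mathrm{Gr}(N'', N)$ at scale $2^{-n}$, show that a cube of side $2^{-n}$ in $K$ is comparable (overlapped, up to a bounded constant factor in sidelength — the analogue of the constant $c_3 < 10$ in Step 3 of Theorem \ref{thm0}) to a cube of side $\asymp 2^{-n}$ in a nearby discretized subspace $K_j$, and then transfer the entropy lower bound from $K_j$ to $K$ while keeping all constants uniform. Handling the linear reparametrizations that turn $W|_K$ into a genuine Brownian sheet (whose scaling constants $m < |\varphi'| < M$ must be uniformly bounded away from $0$ and $\infty$ as $K$ varies over the compact set of subspaces meeting $[1,2]^N$) is the delicate bookkeeping that makes this step harder than the fixed-$K$ case, and is exactly the point where one must be careful that "an uncountable family of a.s.\ statements" becomes "a.s., all statements hold."
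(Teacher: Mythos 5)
Your upper-bound half and your plan for the simultaneous-over-$K$ upgrade (discretizing the Grassmannian and transferring estimates to a nearby discretized subspace, with uniform control of the reparametrization constants) are consistent with what the paper does. The genuine gap is in the core of the lower bound $\dim_H W(E)\geq 2\dim_H E$. The entropy estimate you propose is impossible as stated: if $Q\subset K\cap[1,2]^N$ has side $2^{-n}$, the uniform modulus of continuity of the sheet gives $\mathrm{diam}\,W(Q)\leq Cn2^{-n/2}$, so $W(Q)$ is always covered by $O(n^d)$ balls of radius $2^{-n/2}$ — it can never require $\gtrsim n^{-p}2^{nd/2}$ of them. Moreover, even a corrected lower bound on the covering numbers of the images $W(Q)$ would not make the "reverse covering" count work: given a cover of $W(E)$ by balls $U_m$ of radius $2^{-n_m/2}$, the quantity you must control is how many parameter cubes of side $2^{-n_m}$ have image meeting a single $U_m$ (one ball can a priori absorb exponentially many cubes), and a lower bound on the spread of each individual image says nothing about this multiplicity. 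Knowing images are large does not prevent many of them from piling onto the same ball.

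What is actually needed — and what the paper proves as its key Lemma (the analogue of Proposition \ref{Prop_Kaufman_0}) — is the opposite-direction statement: almost surely, for all large $n$ and all centers $a$ simultaneously, the preimage $\tilde{W}^{-1}(B^\infty(a,2^{-n/2}))$ is covered by at most $n^k$ dyadic cubes of side $2^{-n}$. This is Kaufman's uniform dimension-doubling mechanism: from any efficient cover of $W(E)$ one then pulls back a cover of $E\subset \tilde{W}^{-1}(W(E))$ with only a polynomial-in-$n$ loss, which halves the exponent and gives $2\dim_H E\leq \dim_H W(E)$ for every Borel $E$ at once. The paper establishes this preimage bound by a moment method (estimating $\E[(X^n)^{r+1}]$ recursively, where $X^n$ counts dyadic cubes whose center is mapped $n2^{-n/2}$-close to $a$, using the conditional-variance lower bound of Khoshnevisan–Xiao; the hypothesis $N''\leq d/2$ enters through the convergence of $\sum_j 2^{jN''}2^{-jd/2}$), followed by Markov with $r=n$, Borel–Cantelli, and a discretization in $a$ and then in $K$. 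To repair your proof you would need to replace your image-entropy lemma by such a preimage covering bound (or an equivalent multiplicity bound), i.e., essentially reconstruct the paper's Lemma \ref{lemma_thm1}.
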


\noindent An immediate corollary of Proposition \ref{prop_kaufman_1} is the following:\\
Let $B: \R^N_+\rightarrow \R^d$ be a Brownian sheet with $N \leq \frac{d}{2}$, then for every Borel set $E\subset \R^N$,
\begin{align}
\label{prop_kaufman_1_cor}
    2\dim_H\{E\}=\dim_H\{W(E)\} \ as .
\end{align}
This result generalizes a well-known result of Kaufman in \cite{kaufman1969propriete} for $N=1$ and Mountford in \cite{mountford1989uniform} for N=2. 

\begin{remark}
\label{remark_parametrization}
To locate ourselves in the subspaces $K$, we will need a parametrization of $K$. Note $(e_i)_{1\leq i \leq N}$ the canonical basis of $\R^N$.\\
Let $(k_i)_{i\leq \binom{N}{N''}}$ be the family of subsets of size $N''$ in $\{1,\dots,N\}$ and $V_i:=\text{Vect}(e_j ~:~j\in k_i)$.
In this case, let $q_i$ be the orthogonal projection on $V_i$ and $\Gamma_i ~:~V\to K$ be the parametrization such that $\Gamma_i(x)-x \perp V_i$. We have $q_i \circ \Gamma_i(x) = x$ for every $x \in V_i$. In the following sections, we will choose the $V_i$ given by Lemma \ref{lemma_parametrization}, such that we can drop the subscript $i$ for $V,\,q$ and $\Gamma$, but one should note that those still depend on $K$. Finally, we can define $\tilde{W}~:~V\to \R^d$ by $\tilde{W}=W\circ \Gamma$.\\
For the simplicity of the notations, we will suppose in the rest of this proof that $V=\text{Vect}(e_1,\dots,e_{N''})$ as elements of $\R_+^{N''}$.\\
\end{remark}
\noindent The reason why we chose this parametrization in particular is twofold. On the first hand we need to use the result in \cite[eq (4.5)]{khoshnevisan.xiao} to prove the inequality \eqref{ineq_kosh.xiao}. On the second hand we need a parametrization which does not change the fractal properties. More precisely, with this parametrization it is easy by Lemma \ref{lemma_parametrization} to see that $\dim_H(W^{-1}(0)) = \dim_H(\tilde{W}^{-1}(0))$ and $\dim_H(p(W^{-1}(0))) = \dim_H(p(\tilde{W}^{-1}(0)))$. Thus, in Theorem \ref{thm1} instead of proving \eqref{thm1_eq}, we can prove that 
\begin{align} 
\label{thm1_eq_tilde}
\dim_H(\tilde{\Zz}) = \dim_H(p(\tilde{\Zz})),
\end{align}
where $\tilde{\Zz} = \tilde{W}^{-1}(0)$. Similarly, instead of proving \eqref{prop_kaufman_1_eq}, we can prove that 
\begin{align}
\label{prop_kaufman_1_eq_tilde}
    2\dim_H\{E\}=\dim_H\{\tilde{W}(E)\},
\end{align}
where $E\subset V$.

\subsection{Proof of Proposition \ref{prop_kaufman_1}}
In a first time, and until otherwise explicitly stated, we will do the proof of Proposition \ref{prop_kaufman_1} for a fixed linear subspace $K\subset\R^N$.\\
With Corollary \ref{cor_parametrization}, one can easily prove that for any Borelian $E\subset K$, 
\begin{align}
\label{ineq_prop_kaufman_1}
    \dim_H(\tilde{W}(E))\leq 2\dim_H(E).
\end{align}
Thus, our objective here is only to prove that for any Borelian $E\subset K$,
\begin{align}
\label{ineq_prop_kaufman_2}
   \dim_H(\tilde{W}(E))\geq 2\dim_H(E).
\end{align}
Since $E \subset \tilde{W}^{-1}(\tilde{W}(E))$, we can suppose without loss of generality that $E = \tilde{W}^{-1}(\tilde{W}(E))$.
Our objective until the end of this subsection is now to prove \eqref{ineq_prop_kaufman_2}. We will begin with a Lemma, which will be playing the role that Proposition \ref{Prop_Kaufman_0} had in the demonstration of Theorem \ref{thm0} in our generalized case.

\begin{lemma} 
\label{lemma_thm1}
Fix $R>0$.\\
There exists a universal constant $k>d$ and a (random) rank $n_0$ such that, for any $n\geq n_0$ and any ${a\in[-R,R]^d}$, the set $\tilde{W}^{-1}(B^\infty(a,2^{-\frac{n}{2}}))$ can be covered by at most $n^k$ dyadic cubes of side length $2^{-n}$.
\end{lemma}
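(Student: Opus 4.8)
The plan is to estimate, for a fixed dyadic cube $Q$ of side length $2^{-n}$ in $V \cap [1,2]^{N''}$, the probability that $\tilde W(Q)$ meets $B^\infty(a,2^{-n/2})$, and then to take a union bound over all such cubes and all $a$ in a $2^{-n/2}$-net of $[-R,R]^d$. Since $\tilde W$ is a Gaussian field built from a Brownian sheet through the Lipschitz reparametrization $\Gamma$ of Remark \ref{remark_parametrization}, on the cube $[1,2]^{N''}$ it has stationary-type increment bounds: for $s,t$ in that cube, $\tilde W(t)-\tilde W(s)$ is centred Gaussian with variance comparable to $|t-s|$, so $\E|\tilde W(t)-\tilde W(s)|^2 \asymp |t-s|$ (this is exactly the local structure recorded via \cite[eq.~(4.5)]{khoshnevisan.xiao} and is why the parametrization was chosen). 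First I would fix the centre $t_Q$ of $Q$. The event ``$\tilde W(Q)\cap B^\infty(a,2^{-n/2})\neq\emptyset$'' is contained in the union of two events: (i) $|\tilde W(t_Q)-a|_\infty \le 2\cdot 2^{-n/2}$, and (ii) the oscillation $\sup_{t\in Q}|\tilde W(t)-\tilde W(t_Q)|_\infty > 2^{-n/2}$. Event (i) has probability $\le C\, 2^{-nd/2}$ by the Gaussian density bound (the field at $t_Q$ is nondegenerate on $[1,2]^{N''}$, with density bounded above). Event (ii) is an oscillation over a cube of diameter $\asymp 2^{-n}$, where the field has modulus of continuity essentially $\sqrt{h\log(1/h)}$; since $2^{-n/2}\gg \sqrt{2^{-n}\cdot n}$ for large $n$, this event has probability decaying faster than any power of $2^{-n}$ (e.g.\ $\le C\exp(-c\,2^{n/2}/\!\sqrt n)$ by Borell--TIS or a chaining/Fernique bound for the sup of a Gaussian field over the small cube).

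The union bound then goes: the number of dyadic cubes $Q\subset[1,2]^{N''}$ of side $2^{-n}$ is $\le C\,2^{nN''}$, and a $2^{-n/2}$-net of $[-R,R]^d$ has $\le C_R\,2^{nd/2}$ points. Using $N'' \le d/2$, the product of the cardinalities is $\le C_R\,2^{n(N''+d/2)} \le C_R\,2^{nd}$. Hence
\[
\P\big(\exists\, a \text{ in the net},\ \exists\, Q : \tilde W(Q)\cap B^\infty(a,2^{-n/2})\neq\emptyset \text{ while more than some threshold holds}\big)
\]
is not what we want directly; rather I would bound, for each fixed $a$ in the net, the expected number of cubes $Q$ hitting $B^\infty(a,2^{-n/2})$ by $\le C\,2^{nN''}\cdot 2^{-nd/2}\le C$, which is only $O(1)$, not $O(n^k)$. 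The refinement giving the $n^k$ bound is to control the probability of a large deviation of this count: by the oscillation estimate, for $\tilde W(Q)$ to meet $B^\infty(a,2^{-n/2})$ one needs $\tilde W(t_Q)$ within $2\cdot 2^{-n/2}$ of $a$ OR a super-polynomially rare oscillation, so up to a set of super-exponentially small probability the count of hitting cubes is at most (number of $t_Q$'s with $\tilde W(t_Q)\in B^\infty(a,3\cdot 2^{-n/2})$). One then shows that this latter count exceeds $n^{k}$ only with probability summable in $n$: either invoke the already-available hitting/local-time type estimates for the Brownian sheet (the same circle of ideas used for Proposition \ref{Prop_Kaufman_0} and Lemma \ref{lemme_thm0_2}), or argue directly that a configuration of $n^k$ distinct dyadic centres all mapping into a $2^{-n/2}$-ball forces, after Gaussian conditioning, an event of probability $\le (\text{const})^{n^k}$, which beats the $2^{nd}\cdot 2^{nN''}$ combinatorial factor. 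Taking $k$ large enough (any $k>d$ works once the exponential rate in the rare event dominates, and one checks $k>d$ is compatible) and summing over $n$, Borel--Cantelli furnishes the random rank $n_0$ beyond which, simultaneously for all $a\in[-R,R]^d$, $\tilde W^{-1}(B^\infty(a,2^{-n/2}))$ is covered by at most $n^k$ dyadic cubes of side $2^{-n}$.

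The main obstacle I expect is getting the bound to hold \emph{simultaneously over all $a\in[-R,R]^d$}, not just for a fixed $a$ — this is precisely the ``uncountable family versus almost sure'' difficulty flagged in the introduction. The resolution is the $2^{-n/2}$-net: a ball $B^\infty(a,2^{-n/2})$ for arbitrary $a$ is contained in $B^\infty(a',3\cdot 2^{-n/2})$ for the nearest net point $a'$, and covering $\tilde W^{-1}(B^\infty(a',3\cdot2^{-n/2}))$ by cubes of side $2^{-n}$ costs only a bounded multiplicative factor more than covering $\tilde W^{-1}(B^\infty(a',2^{-n/2}))$ (the preimage of a ball of three times the radius is covered by a bounded number of preimages of radius-$2^{-n/2}$ balls, by the uniform modulus of continuity). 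So proving the statement for the $2^{nd/2}$-sized net and absorbing the factor $3$ into the constant (and into $k$) suffices; the delicate quantitative point is ensuring the rare-event probability in the union bound decays fast enough to overcome the combined combinatorial factor $2^{n(N''+d/2)}\le 2^{nd}$, which is exactly where the hypothesis $N''\le d/2$ is used and where I would borrow the Brownian-sheet hitting estimates (analogues of Lemma \ref{lemme_thm0_1}) rather than re-deriving them from scratch.
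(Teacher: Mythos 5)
Your reduction steps (replace the ball event on a cube by the event that the value at the centre lies in an enlarged ball, via the uniform modulus of continuity; discretize $a$ by a $2^{-n/2}$-net and absorb the factor into the constants) are exactly the paper's Step setup and Step 2, and they are fine. The genuine gap is the heart of the lemma: the tail bound, uniform enough in $n$ to be summable, for the \emph{count} $X^n(a)$ of dyadic centres $t^n_i$ with $|\tilde W(t^n_i)-a|\leq n2^{-n/2}$. You correctly observe that the first moment only gives $O(1)$ (in fact $O(n^d)$), but the two routes you then offer do not close this. The ``direct'' route --- that any configuration of $n^k$ centres mapping into the ball has, after Gaussian conditioning, probability at most $(\mathrm{const})^{n^k}$, beating a combinatorial factor $2^{nd}\cdot 2^{nN''}$ --- is wrong on both sides. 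First, the per-point conditional probability is not bounded away from $1$: for adjacent dyadic cubes the conditional standard deviation of $\tilde W$ at a new centre given the already-conditioned ones is of order $2^{-n/2}$, i.e.\ the same order as (indeed smaller than) the target radius $n2^{-n/2}$, so the conditional probability can be essentially $1$; a clustered (block) configuration of $n^k$ centres has probability roughly $\exp(-c\,n^{k-2N''})$, far larger than $(\mathrm{const})^{n^k}$. Second, the relevant combinatorial factor for a union over configurations is $\binom{2^{nN''}}{n^k}\approx \exp(Cn^{k+1})$, not $2^{n(d+N'')}$, and it is not dominated by any realistic per-configuration bound. The fallback of ``invoking the hitting estimates used for Proposition \ref{Prop_Kaufman_0} and Lemma \ref{lemme_thm0_2}'' is also unavailable: the paper states explicitly that the Kaufman stopping-time strategy has no analogue once the parameter set $V$ has dimension $N''>1$, which is precisely why this lemma exists.

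What actually closes the gap in the paper is a high-moment estimate on $X^n(a)$ with moment order growing with $n$: writing $\E[(X^n)^{r+1}]$ through successive conditioning, one bounds each conditional probability $\P\bigl(|\tilde W_t-a|<n2^{-n/2}\,\big|\,A^n_{q_1},\dots,A^n_{q_r}\bigr)$ by $Cn^d2^{-nd/2}x^{-d/2}$ when $t$ is at distance about $x$ from the conditioning set, using the conditional-variance (sectorial local nondeterminism) lower bound of \cite[eq (4.5)]{khoshnevisan.xiao} --- this is exactly where the chosen parametrization $\Gamma$ matters --- and then sums over distance shells (Lemma \ref{lemme_tx}), where the hypothesis $N''\leq d/2$ makes the geometric sum converge and yields a per-point factor $Cr^{N''}n^d$ (polynomial, not a constant $<1$). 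Iterating gives $\E[(X^n)^{n+1}]\leq C^n (n!)^{N''} n^{dn}\,\E[X^n]$, and Markov's inequality at level $n^k$ with $k$ large produces a quasi-factorially small probability, which survives both the Borel--Cantelli summation and the $2^{nd/2}$-sized net over $a$. Without this (or an equivalent) quantitative conditional estimate, your outline does not yield the $n^k$ covering bound.
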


\begin{proof}
We will proceed in two steps. In a first step, we will prove Lemma \ref{lemma_thm1} for a fixed $a\in[-R,R]^d$. In a second step we will show how to use this intermediate result to find a rank $n_0$ uniform in $a\in[-R,R]^d$.\\
For the sake of simplicity, we will identify in this proof the subspace $V$ with $\R^{N''}$, and we have that $[1,2]^{N''}$ parametrizes the set $K\cap [1,2]^N$. We define, for any integer $n\geq 1$, $\mathcal{D}^n:=(D_i^n)_{i=1,\dots,2^{nN''}}$ a disjoint covering of $[1,2]^{N''}$ consisting of the squares of the form $\prod_{j=1}^{N''} [1+\tfrac{k_j}{2^n},1+\tfrac{k_j+1}{2^n})$ where the $k_j$'s are integers between $0$ and $2^n-1$, and we will denote by $t_i^n$ the center of the square $D_i^n$.\\
For $a\in[-R,R]^d$, we introduce the events
$$B_i^n(a) := \{\exists t \in D^n_i ~:~|\tilde{W}_t-a|_\infty \leq 2^{-\frac{n}{2}}\} \qquad \text{and} \qquad A_i^n(a):=\{|\tilde{W}_{t_i^n}-a|\leq n2^{-\frac{n}{2}}\}.$$
With the modulus of continuity introduced in Corollary \ref{cor_parametrization}, if $n\geq n_0$ a constant independent of $K$, we have the inclusion $B_i^n(a)\subset A_i^n(a)$ \textit{as}. Thus defining $Y^n(a):=\sum_{i=1}^{2^{nN''}}\1_{B_i^n(a)}$ and $X^n(a):=\sum_{i=1}^{2^{nN''}}\1_{A_i^n(a)}$ we also have ${Y^n(a)<X^n(a)}$ \textit{as}.

\paragraph{Step 1 :} 
We fix $a\in[-R,R]^d$ and we note $B:=B^\infty(a,2^{-\frac{n}{2}})$. For the simplicity of notations, we drop the dependencies in $a$ until the end of this Step.\\
Our first objective is to control the quantities $\E[(X^n)^{r+1}]$ for any integer $r\geq 1$. Thus, we compute with the conditional tower property, for any integer $r\geq 1$,
\begin{align*}
	\E[(X^n)^{r+1}]
	&=\sum_{q_1, q_2,\dots, q_r =1}^{2^{nN''}}\,\sum_{q_{r+1}=1}^{2^{nN''}}\E[\1_{A^n_{q_1}}\dots\1_{A^n_{q_r}}\cdot\1_{A^n_{q_{r+1}}}]\\
	&=\sum_{q_1, q_2,\dots, q_r=1}^{2^{nN''}}\P(A^n_{q_1} \cap \dots \cap A^n_{q_r})\cdot\sum_{q_{r+1}=1}^{2^{nN''}}\P(A^n_{q_{r+1}}~|~A^n_{q_1},\dots,A^n_{q_r}).
	\end{align*}

We fix $1 \leq q_1,\dots,q_r \leq 2^{nN''}$ and we define $T:=\{t^n_1,\dots,t^n_{2^{nN''}}\}$ and $T_q:=\{t^n_{q_1},\dots,t^n_{q_r}\}$. Then, for $x\geq 0$ we also define
\begin{align}
\label{def_tx}
    T_x := \{ t \in T-T_q~:~\frac{x}{2} < \max_{i=1,\dots,N''}\min_{s\in T_q}|t^i-s^i| \leq x \}.
\end{align}
One should note that $(T_{2^{-n+j}})_{j=0,\dots,n} \cup T_0$ is a partition of $T-T_q$.\\
We will need the following lemma:
\begin{lemma}
\label{lemma_6}
We have for $t\in T_x$,
$$\P(|\tilde{W}_t-a|<n2^{-\frac{n}{2}} ~|~ A^n_{q_1},\dots,A^n_{q_r}) \leq 2^d\cdot \frac{n^d\cdot2^{-\frac{n}{2}d}}{x^{\frac{d}{2}}}.$$
\end{lemma}

\begin{proof}
First, we notice that,
\begin{align*}
    \sigma^2 &:= Var(\tilde{W}_t~|~\tilde{W}_{t_{q_1}},\dots,\tilde{W}_{t_{q_r}}) = Var(W\circ \Gamma(t)~|~W\circ \Gamma(t_{q_1}),\dots,W\circ \Gamma(t_{q_r}))
\end{align*}
By the result in \cite[eq (4.5)]{khoshnevisan.xiao}, we deduce
\begin{align}
\label{ineq_kosh.xiao}
    \sigma^2 \geq \dfrac{1}{2}\sum_{k=1}^{N} \min_{j=1,\dots,r} |\Gamma(t)^k-\Gamma(t_{q_j})^k| \geq \dfrac{1}{2}\sum_{k=1}^{N''} \min_{j=1,\dots,r} |t^k-t^k_{q_j}| \geq \frac{1}{2} \max_{i=1,\dots,N-N'}\min_{s \in T_q} |t^i-s^i| \geq \frac{x}{4}. 
\end{align} 
Thus, we compute, knowing that $W$ is a Gaussian process, 
\begin{align*}
    \P(|\tilde{W}_t-a|<n2^{-\frac{n}{2}} ~|~ A^n_{q_1},\dots,A^n_{q_r}) &\leq \left(\dfrac{1}{2\pi \sigma^2}\right)^{\frac{d}{2}}\prod_{i=1}^d \int_{a-n2^{-\frac{n}{2}}}^{a+n2^{-\frac{n}{2}}} e^{-\frac{(y-a)^2}{2\sigma^2}}dy\\
    &\leq x^{-\frac{d}{2}}(2n2^{-\frac{n}{2}})^d \\
    &\leq 2^d\cdot\frac{n^d\cdot2^{-\frac{n}{2}d}}{x^{\frac{d}{2}}}.
    \end{align*}
\end{proof}

\noindent Now, we can compute, using Lemma \ref{lemme_tx} and Lemma \ref{lemma_6},
\begin{align*}
    \sum_{q_{r+1}=1}^{2^{nN''}} \P(A^n_{q_{r+1}} ~|~ A^n_{q_1},\dots,A^n_{q_r}) &= \sum_{q_{r+1}\in T-T_q} \P(A^n_{q_{r+1}} ~|~ A^n_{q_1},\dots,A^n_{q_r}) + \sum_{q_{r+1}\in T_q} \P(A^n_{q_{r+1}} ~|~ A^n_{q_1},\dots,A^n_{q_r})\\
    &=\sum_{j=0}^n \sum_{t\in T_{2^{-n+j}}} \P(|\tilde{W}_t-a|<n2^{-\frac{n}{2}} ~|~ A^n_{q_1},\dots,A^n_{q_r})\\
    &+ \sum_{t\in T_0}\P(|\tilde{W}_t-a|<n2^{-\frac{n}{2}} ~|~ A^n_{q_1},\dots,A^n_{q_r}) + \sum_{q_{r+1}\in T_q} 1\\
    &\leq \sum_{j=0}^n  (r2^{n+1}2^{-n+j})^{N''} ~ \dfrac{Cn^d2^{-\frac{n}{2}d}}{(2^{-n+j})^{\frac{d}{2}}} +(2r)^{N''} \\
    &\leq r^{N''}2^{N''}Cn^d \sum_{j=0}^n 2^{jN''}2^{-j\frac{d}{2}} +r \\
    &\leq Cr^{N''}n^d.
\end{align*}
Thus, we have (with an iteration in last step)
\begin{align*}
    \E[(X^n)^{r+1}] &\leq Crn^d2^{n(N''-\frac{d}{2})} \sum_{q_1, q_2,\dots, q_r=1}^{2^{nN''}}\P(A^n_{q_1} \cap \dots \cap A^n_{q_r})\\
    &= Crn^d2^{n(N''-\frac{d}{2})} \E[(X^n)^r]\\
    & \leq C^r (r!)^{N''} n^{dr} \E[X^n].
\end{align*}
But, we have with Lemma \ref{lemma_6},
\begin{align*}
    \E[X^n]&=\sum_{t\in T} \P(|\tilde{W}_t-a|<n2^{-\frac{n}{2}}~|~\tilde{W}_0)\\
    &\leq \sum_{t\in T} C\cdot\frac{n^d\cdot2^{-\frac{n}{2}d}}{(|t|_\infty)^{\frac{d}{2}}}\\
    &\leq \sum_{t\in T} C\cdot\frac{n^d\cdot2^{-\frac{n}{2}d}}{(2^{-n})^{\frac{d}{2}}}\\
    &\leq 2^{nN''}Cn^d.
\end{align*}
Thus, finally using Markov inequality and reuniting all the bricks we get, taking $r=n$ and a $k$ that will be chosen later,
\begin{align*}
    \P(X^n \geq n^k) &\leq \dfrac{\E[(X^n)^{n+1}]}{n^{kr}}\\
    &\leq n^{-k(n+1)}.C^{n+1}((n+1)!)^{N''} n^{d(n+1)}Cn^d \\
    &= C^{n+2}((n+1)!)^{N''}n^{(d-k)(n+1)+d}2^{nN''}
\end{align*}
We get finally
\begin{align}
\label{thm1_preuve_a_fixe}
    \P(X^n\geq n^k) \leq C (n!)^{N''} 2^{nN''} n^{n(d-k)},
\end{align}
which is the general term of a converging series if $k>d$. Thus, we can apply the Borel-Cantelli Lemma to prove the existence of a (random) rank $n_0$ such that \textit{as} for every $n\geq n_0$, $X^n<n^k$. Finally, since $Y^n \leq X^n$ \textit{as}, we have the desired result for a fixed $a\in[-R,R]^d$.

\paragraph{Step 2 :}
Since our objective is to find a rank $n_0$ independent of $a\in[-R,R]^d$, this control for a fixed $a$ is not enough, and we have to recover the dependency in $a$.\\
The trick is to take any finite covering $\tilde{\mathcal{D}}^n:=(B^\infty(a_i^n,2^{-\frac{n}{2}}))_{i\in \mathcal{I}^n}$ of the hypercube $[-R,R]^d$. For the sake of simplicity consider that the $a_i^n$ are just the points of $n$-dyadic coordinates (\textit{ie} of the form $\frac{k}{2^{\frac{n}{2}}}$). In this case, if we find a rank $n_0$ uniform in $a_i^n$ with $i\in\mathcal{I}^n$, since every hypercube $B^\infty(a,2^{-\frac{n}{2}})$ can be covered by at most $2^d$ hyper-cubes from $\tilde{\mathcal{D}}^n$, the result would still hold.\\
We can now compute the sum, noting that the set $\mathcal{I}^n$ contains $(R2^{\frac{n}{2}+1})^d$ elements,
\begin{align*}
    \sum_{n=1}^\infty \P(\exists i\in\mathcal{I}^n : X^n(a_i^n) > n^k) & \leq \sum_{n=1}^\infty \sum_{i=1}^{(R2^{\frac{n}{2}+1})^d}\P(X^n(a_i^n) > n^k)\\
    &\leq \sum_{n=1}^\infty (R2^{\frac{n}{2}+1})^d\P(X^n(a_i^n) > n^k).
\end{align*}
Then, by \eqref{thm1_preuve_a_fixe} we get
\begin{align*}
    \sum_{n=1}^\infty \P(\exists i\in\mathcal{I}^n : X^n(a_i^n) > n^k) & \leq \sum_{n=1}^\infty (R2^{\frac{n}{2}+1})^dC (n!)^{N-N'} n^{n(d-k)}
\end{align*}
which is the general term of the convergent series if and only $k>d$. Thus, applying Borel-Cantelli Lemma as in Step 1, one can deduce that there exist a (random) rank $n_0$ such that for every $n\geq n_0$ and every $i\in\mathcal{I}^n$ the desired conclusion holds.
\end{proof}
~\\
Now we can finish this subsection with the proof of \eqref{ineq_prop_kaufman_2}. We note $F:=\tilde{W}(E)$.\\
Since $F=\cup_{R\in \N} F\cap[-R,R]^d$, we can suppose without loss of generality that $F\subset[-R,R]^d$.\\
Let $\beta>\dim_H(F)$. There exists a covering $\mathcal{C}$ of $F$ which can be assumed to consist of sets of the form $B^\infty(a,2^{-\frac{n}{2}})$ for $n\geq n_0$ given by Lemma \ref{lemma_thm1} and such that $\mathcal{H}^\beta(\mathcal{C})<\infty$. Thus, noting $N^n(\mathcal{C})$ the number of hyper-cubes of side length $2^{-\frac{n}{2}}$ in this covering, we have that
\begin{align}
    \label{thm1_preuve_eq1}
    \sum_{n\geq n_0} N^n(\mathcal{C}) 2^{-\frac{n}{2}\beta}<\infty.
\end{align}
From Lemma \ref{lemma_thm1} we have that each $\tilde{W}^{-1}(B^\infty(a,2^{-\frac{n}{2}})) \subset \mathcal{C}$ can be covered by at most $n^k$ cubes in $\mathcal{D}^n$ of side length of the form $C2^{-n}$, which are forming a covering of $\tilde{W}^{-1}(F)$. Thus, for any $\alpha>\beta$, we have that
$$ \mathcal{H}^{\frac{\alpha}{2}}(\tilde{W}^{-1}(F)) \leq \sum_{n\geq n_0} N^n(\mathcal{C})n^k2^{-n\frac{\alpha}{2}}<\infty.$$
From the arbitrariness of $\beta$ and $\alpha$, we can conclude that $2\dim_H(E)\leq \dim_H(\tilde{W}(E))$, which concludes the proof for a fixed linear subspace $K\subset\R^N$.\\
~\\
From now on, we will explain why does Proposition \ref{prop_kaufman_1} indeed apply for every linear subspace $K\subset\R^N$ simultaneously. We will follow a path similar to Step 3 in the proof of Theorem \ref{thm0}.\\
Firstly, one should note that for every $m\in\N^\times$, one can find a family $\mathcal{K}^m$ of linear sub-spaces $(K_j^m)_{j\leq \#\mathcal{K}^m}$ of $\R^N$ such that $\#\mathcal{K}^m \leq C2^{nN}$ and for every linear subspace $K\in\R^N$ 
\begin{equation}
    \label{angk1}
    \min_{j\leq \#\mathcal{K}^m_j} d(K_j^m,K)<2^{-m},
\end{equation}
where $d$ is the Hausdorff distance between to subsets of $\R^N$.\\
Secondly, since $\#\mathcal{K}^m \leq C2^{nN}$, an easy adaptation of the proof of Lemma \ref{lemma_thm1} shows that almost surely,
\begin{align}
    \label{angk2}
    \sum_{n=1}^\infty \P(\exists K^n_j \in \mathcal{K}^m , \exists i\in \mathcal{I}^n ~:~X^n(a^n_i)>n^k) < \infty,
\end{align}
where the sets $\mathcal{I}^n$, $X^n$ and $a^n_i$ are of course defined with respect to the linear subspace $K^n_j$. From \eqref{angk2} we can deduce that for every $R>0$, there exists a (random) rank $n_0$ such that for every $n\geq n_0$ every $a\in[-R,R]^d$ and every $K_j^n\in\mathcal{K}^n$, the set $\tilde{W}_j^{-1}(B^\infty(a,n2^{-n/2}))$, where $\tilde{W}_j$ is of course defined with respect to the linear subspace $K_j^m$, can be covered by at most $n^k$ dyadic cubes of side length $2^{-n}$.\\
Thirdly, fix $K$ any linear subspace of $\R^N$, $n\in\N^\times$ and $a\in[-R,R]^d$. Note that by the modulus of continuity of $\tilde{W}$ showed in Corollary \ref{cor_parametrization} we have the inclusion
\begin{align}
    \label{angk3}
    \tilde{W}^{-1}(B^\infty(a,2^{-n/2})) ~\subset ~ \tilde{W}^{-1}(B^\infty(a,n2^{-n/2})) ~ \subset ~ \tilde{W}_j^{-1}(B^\infty(a,2n2^{-n/2})) ,
\end{align}
where we recall that $\tilde{W}=W\circ \Gamma_K$ and $\tilde{W}_j=W\circ \Gamma_{K^n_j}$ for $K_j^n$ being the indices for which the $\min$ in \eqref{angk1} is attained. Moreover, since $B^\infty(a,2n2^{-n/2}) \subset \cup_{l=1}^{2^d} B^\infty(a_l^n,n2^{-n/2})$ for certain points $a_l^n\in[-R,R]^d$, we have
\begin{align}
    \label{angk4}
    \tilde{W}^{-1}(B^\infty(a,2^{-n/2})) ~ \subset ~ \bigcup_{l=1}^{2^d} \tilde{W}_j^{-1}(B^\infty(a_l^n,2n2^{-n/2})).
\end{align}
Thus, using the fact that for $n\geq n_0$ the set $\tilde{W}_j^{-1}(B^\infty(a,n2^{-n/2}))$ can be covered by at most $n^k$ dyadic cubes of side length $2^{-n}$, we showed that there exists a (random) rank $n_0$ such that the set $\tilde{W}^{-1}(B^\infty(a,2^{-n/2}))$ can be covered by at most $n^k$ dyadic cubes of side length $2^{-n}$.\\
Finally, one can conclude as in the first part of the proof of Proposition \ref{prop_kaufman_1}, in which we were reasoning for a fixed linear subspace $K\in\R^N$.

\subsection{Proof of Theorem \ref{thm1}}
As in the proof of Theorem \ref{thm0}, we can use Lemma \ref{lemma_thm1} with $a=0$ to show Theorem \ref{thm1} for a fixed projection $p$. Then we can adapt the end of the proof of Proposition \ref{prop_kaufman_1} to deduce that the result holds simultaneously for every projection $p$.

\newpage
\section{Proof of Theorem \ref{thm2}}
\label{section_thm2}
This section will be dedicated to the proof of Theorem \ref{thm2}.
\paragraph{Notations}
We note $(e_i)_{1\leq i \leq N}$ the canonical basis of $\R^N$. The notations $|\cdot|$ and $\lambda(\cdot)$ will always denote the usual euclidean norm and the usual Lebesgue measure for the space it is defined on, and $B(x,r)$ is the associated open ball of center $x$ and radius $r>0$. Similarly, $|\cdot|_\infty$ will denote the usual $\sup$-norm on the space it is defined on. The number of elements of a set $Q$ is noted $\# Q$.\\
We will reserve the notation $p$ for orthogonal projections $p: \R^N\rightarrow \R^{N'}$ with $N'<N-\frac{d}{2}$ where $N':=rank(p)$. The variable $s\in \R^{N'}$ will denote a point in the projective space of $p$. Since the sets $p^{-1}(s)$ are sub-spaces of $\R^N$, we can use Remark \ref{remark_parametrization} to find a subset $k\subset \{1,\dots,N\}$ of size $N'':=N-N'$ and an orthogonal projection $q~:~ p^{-1}(s) \mapsto \text{Vect}(e_j~:~j\in k)$ such that its inverse function is $N$-lipschitz. We will note $V(p):=\text{Vect}(e_j~:~j\in k)$ which can be shown does not depend on the point $s$ we considered, the orthogonal projection $q(s,p)$ does depend also on the point $s$ and its inverse $\Gamma(s,p)$ will be called the parametrization function. As previously we will also define $\Bar{W}(s,p):=W\circ \Gamma(s,p)$.\\
Moreover, we will often not work on all $\R^N$ but rather on compact rectangles $M\subset \R^N$ and we will study the local time of the process $\Bar{W}(s,p):=W\circ \Gamma(s,p)$ considering that $W$ is restricted to $M$. For any $x\in \R^d$ we will note this local time spent by $\Bar{W}$ around the value $x$ by $L_M(x\,;s,p)$.\\
In the cases where it does not lead to any confusion, we may allow ourselves to drop the dependencies in $s$, $p$, $x$ or in $M$.\\
Finally, the notations $C$ and $C(\cdot)$ will denote throughout this section a constant that does not depend on any relevant variable, and may change its value from one line to the other.\\

\noindent We will proceed in three steps. In a first time in Proposition \ref{thm2_1} we prove that the local time is continuous in probability for the space variable. In a second time in Proposition \ref{thm2_2} we use Proposition \ref{thm2_1} to prove that the local time is continuous jointly the parameters $s$ and $p$. One has to notice that the space where $p$ lives is homeomorphic to a subspace of $\R^{N.N'}$, such that we can avoid to specify the exact norm we will use in the definition of continuity, since they are all the same up to an irrelevant constant. Finally in Section \ref{section_thm2_final} we will prove why Proposition \ref{thm2_2} implies Theorem \ref{thm2}.

\begin{proposition}
    \label{thm2_1}
    Fix a projection $p$ and a point $s$ and let $\Bar{W}$ be the associated process. Let $M \subset \R_+^N$ be a bounded rectangle. For any $0<\alpha<\frac{1}{2}\min(1,N''-\frac{d}{2})$ there exists $c>0$ such that for $x,y \in \R^d$ close enough
    $$ \P(|L_M(x\,;s,p)- L_M(y\,;s,p)| \geq |x-y|^\alpha) \leq e^{-c|x-y|^{-\alpha}}. $$
\end{proposition}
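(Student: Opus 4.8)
The plan is to estimate the moments $\E[|L_M(x) - L_M(y)|^m]$ and then apply a Chernoff-type argument to convert the moment bound into the stretched-exponential tail. First I would recall the standard occupation-density representation of the local time: for a suitable approximate identity $f_\varepsilon$ supported near $0$, $L_M(x) = \lim_{\varepsilon\to 0}\int_M f_\varepsilon(\Bar W_t - x)\,dt$, so that for any $m\ge 1$
\begin{align*}
\E\big[|L_M(x)-L_M(y)|^m\big] = \int_{M^m} \E\Big[\prod_{j=1}^m \big(\delta_x - \delta_y\big)(\Bar W_{t_j})\Big]\,dt_1\cdots dt_m,
\end{align*}
interpreted via the Gaussian densities of the vector $(\Bar W_{t_1},\dots,\Bar W_{t_m})$. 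Because $\Bar W = W\circ\Gamma$ and $\Gamma$ is bi-Lipschitz onto its image, the local nondeterminism estimate from \cite[eq.\ (4.5)]{khoshnevisan.xiao} (the same one used in \eqref{ineq_kosh.xiao}) applies: the conditional variance of $\Bar W_{t_j}$ given $\Bar W_{t_1},\dots,\Bar W_{t_{j-1}}$ is bounded below by $c\max_i \min_{l<j}|t_j^i - t_l^i|$. Writing the difference $\delta_x-\delta_y$ in Fourier variables, each factor contributes a gain of order $|x-y|$ times a frequency, and the local nondeterminism controls the resulting Gaussian integrals.

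**Key steps.** (1) Expand the $m$-th moment as above and pass to Fourier transforms, so that the integrand becomes $\prod_j (e^{-i\xi_j\cdot x} - e^{-i\xi_j\cdot y})$ times a Gaussian characteristic function; bound $|e^{-i\xi_j\cdot x}-e^{-i\xi_j\cdot y}| \le 2\min(1, |\xi_j|\,|x-y|)$. (2) Use local nondeterminism to diagonalize the Gaussian integration, ordering the times $t_1,\dots,t_m$ and integrating out the $\xi_j$ successively; each integration over $\xi_j\in\R^d$ produces a factor $C\big(\max_i\min_{l<j}|t_j^i-t_l^i|\big)^{-d/2}$, with an extra factor $|x-y|^{?}$ harvested from the $\min(1,|\xi_j||x-y|)$ bound — the optimal split of the $m$ factors yields roughly $|x-y|^{\alpha m}$ after choosing how many frequencies to bound by $1$ versus by $|\xi_j||x-y|$. (3) Integrate over $t_1,\dots,t_m\in M$: the singularity $\big(\max_i\min_{l<j}|t_j^i - t_l^i|\big)^{-d/2}$ is integrable precisely because $N'' - d/2 > 0$, and a careful combinatorial bookkeeping (summing over which coordinate achieves the max and which earlier index achieves the min, as in the computation following \eqref{def_tx}) gives $\E[|L_M(x)-L_M(y)|^m] \le C^m (m!)^{\gamma} |x-y|^{\alpha m}$ for some $\gamma$ depending on $N''$ and $\alpha < \tfrac12\min(1, N''-\tfrac d2)$. (4) Finally, by Markov's inequality $\P(|L_M(x)-L_M(y)|\ge |x-y|^\alpha) \le C^m(m!)^\gamma$, and optimizing over $m\sim |x-y|^{-\alpha/\gamma}$ — or more simply over $m$ proportional to $|x-y|^{-\alpha}$ when $\gamma$ can be absorbed — yields the bound $e^{-c|x-y|^{-\alpha}}$ for $x,y$ close enough.

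**Main obstacle.** The delicate point is step (3): one must show that after the Fourier integrations the remaining time integral grows no faster than $C^m (m!)^\gamma$, and simultaneously that enough powers of $|x-y|$ survive to give the exponent $\alpha$ in the tail. This requires a genuinely careful induction over the ordered times, keeping track of how the local nondeterminism lower bound interacts with the choice, for each index $j$, of whether to use $|e^{-i\xi_j\cdot x}-e^{-i\xi_j\cdot y}|\le 2$ or $\le 2|\xi_j||x-y|$; the constraint $\alpha < \tfrac12\min(1,N''-\tfrac d2)$ is exactly what makes the bookkeeping close, with the factor $\tfrac12$ coming from the square-root in the conditional standard deviation and the $N'' - d/2$ from integrability of the time singularity. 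Once the moment bound $\E[|L_M(x)-L_M(y)|^m]\le C^m(m!)^\gamma|x-y|^{\alpha m}$ is in hand, the passage to the stretched-exponential tail in step (4) is routine, and the continuity in probability asserted in the proposition follows immediately.
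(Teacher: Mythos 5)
Your overall strategy (Fourier representation of the moments of local-time increments, factorial moment growth, then a Chernoff-type step) is the same general route the paper takes, but the quantitative core is missing and, as written, the final step fails. In step (3) you claim $\E[|L_M(x)-L_M(y)|^m]\le C^m(m!)^\gamma|x-y|^{\alpha m}$ and in step (4) you apply Markov at the threshold $|x-y|^\alpha$, which yields $\P(|L_M(x)-L_M(y)|\ge|x-y|^\alpha)\le C^m(m!)^\gamma$; this is $\ge 1$ and increasing in $m$, so no choice of $m$ gives a nontrivial bound, let alone $e^{-c|x-y|^{-\alpha}}$. To make the scheme work you need two quantitative inputs, which are exactly what you defer to the ``main obstacle'': (i) a surplus power of $|x-y|$ in the moment bound beyond the threshold actually used (the paper harvests $|x-y|^{2m\eta\alpha_0}$ with $\eta\alpha_0>2\alpha$ via Lemma \ref{thm2_1_lemma2}, bounds the exponential moment $\E[\exp(|L_M(x)-L_M(y)|/|x-y|^{2\alpha})]$, and only then applies Markov at level $|x-y|^{\alpha}$, which is what produces the exponent $|x-y|^{-\alpha}$); and (ii) a sharp control of the factorial exponent: after all bookkeeping the $2m$-th moment must be of order $(2m)!$ times $|x-y|^{(2\alpha+\varepsilon)2m}$, i.e.\ effectively $\gamma\le 1$, otherwise the optimization over $m$ only gives a stretched exponential with a strictly smaller exponent than $\alpha$. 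This is precisely what the paper's Lemma \ref{thm2_1_lemma1} delivers through the explicit ratio $(2m)!^{N''}/g\big(1+2m(1-\tfrac{d+\eta}{2N''})\big)^{N''}$, and it is where the hypothesis $\alpha<\tfrac12\min(1,N''-\tfrac d2)$ is actually consumed (via Stirling and the choice $\eta\alpha_0-2\alpha>0$). Announcing that ``careful combinatorial bookkeeping gives $(m!)^\gamma$ for some $\gamma$'' is not enough: the truth of the stated tail depends on the exact value of that exponent.

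On the method itself: running the moment computation through sequential conditioning and the sectorial nondeterminism bound \eqref{ineq_kosh.xiao} is a legitimate alternative (it is how the paper proves Lemma \ref{lemma_thm1} for Theorem \ref{thm1}), but it is not what the paper does here, and the combinatorics you would face --- choosing, for each ordered time, which coordinate realizes the max and which earlier point realizes the min, then summing the singular time integrals with the right multiplicities --- is exactly the part that generates the factorial growth. The paper sidesteps it by decomposing $\Bar{W}$ into independent coordinate-wise martingales with stationary independent increments (as for \cite[eq (4.5)]{khoshnevisan.xiao}), applying H\"older's inequality across the $N''$ coordinates, and evaluating the resulting one-dimensional time integrals exactly with the Dirichlet--Gamma formula of Lemma \ref{lemma_gamma}. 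If you wish to keep the local-nondeterminism route, you must carry the induction over ordered times far enough to obtain an explicit bound of the same strength as Lemma \ref{thm2_1_lemma1}; until then the proposal is a plan rather than a proof.
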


\begin{proposition}
    \label{thm2_2}
    For any fixed $x\in \R^d$ and rectangle $M$, almost surely the application $(s,p) \longmapsto L_M(x\,;s,p)$ is continuous.
\end{proposition}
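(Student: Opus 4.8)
The plan is to upgrade the spatial regularity of Proposition \ref{thm2_1} into joint continuity in $(s,p)$ by a multiparameter Kolmogorov--Chentsov argument, the bridge between the two being the occupation density formula. Throughout, fix the point $x\in\R^d$, the rectangle $M$, a compact set $K$ in the (finite dimensional) space of parameters $(s,p)$, and write $\eta:=|s-s'|+|p-p'|$ for $(s,p),(s',p')\in K$.

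I would first record two preliminary estimates. (i) By Proposition \ref{thm2_1} together with a Garsia--Rodemich--Rumsey (or dyadic chaining) argument, for each fixed $(s,p)$ the map $z\mapsto L_M(z\,;s,p)$ admits an $\alpha'$-H\"older continuous modification for every $\alpha'<\alpha$, and its H\"older seminorm $\Hh(s,p)$ near $x$ has a stretched-exponential tail; inspecting the proof of Proposition \ref{thm2_1}, which only uses that $\Gamma(s,p)$ has an $N$-Lipschitz inverse and the covariance lower bound of \cite[eq (4.5)]{khoshnevisan.xiao}, these tail estimates, and likewise the moments of $L_M(x\,;s,p)$ itself, can be taken uniform over $(s,p)\in K$. (ii) Since $\Gamma$ is jointly Lipschitz in all of its arguments, $\sup_{t\in M}|\Gamma(s,p)(t)-\Gamma(s',p')(t)|\le C\eta$, so, setting $\varepsilon:=\sup_{t\in M}\big|\bar W(t\,;s,p)-\bar W(t\,;s',p')\big|=\sup_{t\in M}\big|W(\Gamma(s,p)(t))-W(\Gamma(s',p')(t))\big|$, the uniform modulus of continuity of the Brownian sheet on a bounded region yields $\varepsilon\le C\sqrt{\eta\log(1/\eta)}$ for $\eta$ small, with a sub-Gaussian type tail.

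The core step compares the two local times at a scale $r>\varepsilon$. Since $|\bar W(t\,;s,p)-\bar W(t\,;s',p')|\le\varepsilon$ pointwise in $t$, one has the sandwich $\1_{B(x,r-\varepsilon)}(\bar W(t\,;s,p))\le\1_{B(x,r)}(\bar W(t\,;s',p'))\le\1_{B(x,r+\varepsilon)}(\bar W(t\,;s,p))$; integrating over $t\in M$, dividing by $\lambda(B(x,r))$, using the occupation density identity $\int_M\1_{B(x,\rho)}(\bar W(t\,;s,p))\,dt=\int_{B(x,\rho)}L_M(z\,;s,p)\,dz$ (and its analogue for $(s',p')$), and using (i) to replace the spatial averages of $L_M(\cdot\,;s,p)$ and $L_M(\cdot\,;s',p')$ over balls of radius $\le 2r$ by their values at $x$ up to an error $\le C(\Hh(s,p)+\Hh(s',p'))r^{\alpha'}$, one obtains, after controlling the normalisation mismatch $\lambda(B(x,r\pm\varepsilon))/\lambda(B(x,r))=(1\pm\varepsilon/r)^{d}$,
\[
\big|L_M(x\,;s,p)-L_M(x\,;s',p')\big|\ \le\ C\,\big(\Hh(s,p)+\Hh(s',p')+L_M(x\,;s,p)\big)\Big(r^{\alpha'}+\tfrac{\varepsilon}{r}\Big).
\]
Taking $r\asymp\varepsilon^{1/(1+\alpha')}$ minimises the last factor, and by (ii) this bounds the increment by a strictly positive power of $\eta$ (up to a logarithmic factor) times a random constant whose moments are uniformly finite over $K$; in particular $\E\big[|L_M(x\,;s,p)-L_M(x\,;s',p')|^{q}\big]\le C\,\eta^{\gamma q}$ for every integer $q$ and some $\gamma=\gamma(\alpha')>0$ (after absorbing the logarithm).

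Finally, choosing $q$ large enough that $\gamma q$ exceeds the dimension of the parameter space carrying $(s,p)$, the multiparameter Kolmogorov--Chentsov theorem produces an almost surely continuous modification of $(s,p)\mapsto L_M(x\,;s,p)$ on $K$; identifying it with $L_M(x\,;\cdot,\cdot)$ is then routine (via Fubini together with the $L^2$-convergence, uniform over $K$, of the occupation averages $\lambda(B(x,\rho))^{-1}\int_M\1_{B(x,\rho)}(\bar W(t\,;\cdot))\,dt$), and letting $K$ exhaust the parameter space gives the statement. The main obstacle is the comparison step above: a naive use of the occupation density formula only bounds one local time by $2^{d}$ times the other, rather than by $1+o(1)$ times it, and the remedy --- working at a scale $r$ much larger than the perturbation $\varepsilon$ and paying the spatial H\"older smoothing error $r^{\alpha'}$, which is exactly the information imported from Proposition \ref{thm2_1} --- is what makes the increments genuinely small; the secondary technical point is securing the uniformity over $K$ of the random constants $\Hh$ and of the modulus of continuity, without which the Kolmogorov--Chentsov argument would not close.
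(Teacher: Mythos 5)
Your core comparison step is sound and is in fact the same mechanism the paper uses in Lemma \ref{thm2_2_lemma}: a pointwise sandwich of indicators at radii $r\pm\varepsilon$, the occupation--density identity, and the spatial regularity imported from Proposition \ref{thm2_1} to trade the averaging radius $r$ against the perturbation $\varepsilon$. The genuine gap is in the final upgrade. By converting everything into moment bounds $\E\big[|L_M(x\,;s,p)-L_M(x\,;s',p')|^{q}\big]\leq C\eta^{\gamma q}$ and invoking Kolmogorov--Chentsov, you only produce a \emph{continuous modification} $\tilde L$ of the field $(s,p)\mapsto L_M(x\,;s,p)$, i.e.\ $\tilde L(s,p)=L_M(x\,;s,p)$ almost surely \emph{for each fixed} $(s,p)$, with an exceptional null set depending on $(s,p)$. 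The proposition, as it is used later, asserts continuity of the actual map: in the proof of Theorem \ref{thm2} one needs, on a single event of full (or at least positive) probability, that the \emph{true} local time $L_{M_n}(0\,;s,p)$ is bounded below simultaneously for all $(s,p)$ in a ball, because only positivity of the true occupation density lets you conclude $p^{-1}(s)\cap\Zz\neq\emptyset$ for every such $(s,p)$. Positivity of a modification gives this only parameter-by-parameter, which is exactly the ``for each $(s,p)$ almost surely'' versus ``almost surely for all $(s,p)$'' distinction that is the central difficulty of the paper; your identification step (``Fubini together with the $L^2$-convergence of the occupation averages'') again only yields equality at each fixed $(s,p)$ and hence re-proves the modification property rather than closing this gap.

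To close it you must keep the estimate pathwise and uniform over an uncountable family, which is what the paper does: the comparison of Lemma \ref{thm2_2_lemma} is driven by the almost sure uniform modulus of continuity of the sheet (Corollary \ref{cor_parametrization}), the spatial supremum $\sup_{x\in B(0,\epsilon)}|L(x)-L(0)|$ is controlled by dyadic chaining in the space variable using the exponential tails of Proposition \ref{thm2_1}, and then a Borel--Cantelli argument over the dyadic nets $A_n$ in the parameter space gives, on one event of probability one, smallness of \emph{all} increments of the true local time along the nets, from which continuity of the true map follows by chaining. In your scheme the analogous missing ingredient is an almost sure bound on $\sup_{(s,p)\in K}\Hh(s,p)$ together with a simultaneous (not per-parameter) version of your pathwise inequality on a dense net; your statement that the tail bounds are ``uniform over $K$'' only means the constants do not depend on the parameter, which is strictly weaker and does not by itself control the supremum over the uncountable family.
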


\subsection{Proof of Proposition \ref{thm2_1}}
Our approach is inspired by the work in \cite{ehm1981sample}. It relies on the following lemma.
\begin{lemma}
\label{thm2_1_lemma1}
Fix a projection $p$ and a point $s$ and let $\Bar{W}$ be the associated process. Let $\bar{M}\subset V$ be a bounded rectangle. Define for $m\in \N$ and $ 0\leq \eta < N''-\frac{d}{2}$,
	$$	J_{\bar{M}}(2m,\eta) := \int_{\bar{M}^{2m}}\int_{(\R^d)^{2m}}\left| \E e^{i\sum_{j=1}^{2m}\beta_j\cdot \tilde{W}(t^j)} \right|\prod_{j=1}^{2m}|\beta_j|^\eta d\beta_j dt_j.$$	
Then, for any $ 0\leq \eta < N''-\frac{d}{2}$ there exists a finite constant $C(\eta)>0$ such that, for any $m\in \N$,
	$$J_{\bar{M}}(2m,\eta) \leq C^{2m}\left(\frac{(2m)!}{g[1+2m(1-(d+\eta)/2N'')]}\right)^{N''},$$
where $g$ is the Euler Gamma-function.
\end{lemma}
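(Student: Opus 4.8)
The plan is to bound the integral $J_{\bar M}(2m,\eta)$ by first using the Gaussian structure of $\tilde W$ to get an explicit expression for the characteristic function, then estimating the resulting integral over $(\R^d)^{2m}$, and finally handling the integral over the time variables $\bar M^{2m}$ by a change-of-variables/ordering argument. Writing $\tilde W(t^j)=(\tilde W_1(t^j),\dots,\tilde W_d(t^j))$ with the $d$ coordinates i.i.d., and $\beta_j=(\beta_j^{(1)},\dots,\beta_j^{(d)})$, the expectation factorizes across the $d$ coordinates:
$$
\E\,e^{i\sum_{j=1}^{2m}\beta_j\cdot\tilde W(t^j)}=\prod_{\ell=1}^d \exp\Bigl(-\tfrac12\,\mathbf{b}^{(\ell)\top}\Sigma\,\mathbf{b}^{(\ell)}\Bigr),
$$
where $\Sigma=\Sigma(t^1,\dots,t^{2m})$ is the $2m\times 2m$ covariance matrix of $(\tilde W_1(t^1),\dots,\tilde W_1(t^{2m}))$ and $\mathbf b^{(\ell)}=(\beta_1^{(\ell)},\dots,\beta_{2m}^{(\ell)})$. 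The first step is therefore to order the time points, say $t^{\sigma(1)},\dots,t^{\sigma(2m)}$ along a suitable coordinate, and invoke the lower bound on conditional variances from \cite[eq (4.5)]{khoshnevisan.xiao} (exactly as in \eqref{ineq_kosh.xiao}) to conclude that, on each simplex, $\mathbf b^\top\Sigma\,\mathbf b\geq c\sum_{j}(r_j)\,|\sum_{k\geq j}\beta_{\sigma(k)}|^2$ for appropriate gaps $r_j$ between consecutive ordered coordinates. This is the standard "successive conditioning" trick that reduces the quadratic form to a sum of independent-looking increments.

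Next I would perform the substitution $u_j=\sum_{k\geq j}\beta_{\sigma(k)}$ (a unimodular change of variables on each $(\R^d)^{2m}$-simplex, since it is triangular with unit diagonal), under which $\beta_{\sigma(j)}=u_j-u_{j+1}$ and $\prod_j|\beta_{\sigma(j)}|^\eta\le C^{2m}\prod_j(|u_j|^\eta+|u_{j+1}|^\eta)$; expanding this product and bounding $\int_{\R^d}|u|^{\eta}e^{-c\,r|u|^2}\,du\le C\,r^{-(d+\eta)/2}$ for each factor, we obtain
$$
\int_{(\R^d)^{2m}}\Bigl|\E\,e^{i\sum_j\beta_j\cdot\tilde W(t^j)}\Bigr|\prod_j|\beta_j|^\eta\,d\beta_j\ \le\ C^{2m}\prod_{j=1}^{2m}\bigl(\text{gap}_j\bigr)^{-(d+\eta)/2N''},
$$
where $\text{gap}_j$ is the $j$-th increment of the $N''$ sorted coordinates of the time points (here the exponent $N''$ enters because $\tilde W$ lives on a $N''$-dimensional parameter space and the variance lower bound distributes the decay among the $N''$ coordinate directions). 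The condition $\eta<N''-\tfrac d2$ is precisely what makes $(d+\eta)/2N''<1$, so each of these singular factors is locally integrable.

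The last step is to integrate the product of singular gap-factors over $t^1,\dots,t^{2m}\in\bar M$. After ordering, this is (up to the combinatorial factor $(2m)!$ for the orderings and $C^{2m}$ from geometric constants) a product of $2m$ one-dimensional integrals of the form $\int_0^{|\bar M|} s^{-(d+\eta)/2N''}\,ds$, iterated; carrying out this iterated integration of a power singularity produces exactly a ratio of Gamma-functions, and raising to the power $N''$ (one factor per coordinate direction of $V$) yields the claimed bound
$$
J_{\bar M}(2m,\eta)\le C^{2m}\left(\frac{(2m)!}{g\bigl[1+2m\bigl(1-(d+\eta)/2N''\bigr)\bigr]}\right)^{N''}.
$$
I expect the main obstacle to be the bookkeeping in the second and third steps: making the change of variables genuinely valid simultaneously over all $(2m)!$ time-orderings, controlling the cross terms that arise from expanding $\prod_j(|u_j|^\eta+|u_{j+1}|^\eta)$, and verifying that the iterated power-singularity integral indeed assembles into the stated Gamma-function ratio with the correct constant and the correct exponent $N''$ — this last identification is the combinatorial heart of Ehm's estimate and is where one must be careful that the exponent $1-(d+\eta)/2N''$ (rather than $1-(d+\eta)/2$) appears, which is exactly the $N''$-dimensional-parameter refinement of \cite{ehm1981sample}.
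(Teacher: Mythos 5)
Your outline has the right general shape (Gaussian characteristic function, successive conditioning via the lower bound \eqref{ineq_kosh.xiao}, the triangular substitution $u_j=\sum_{k\geq j}\beta_{\sigma(k)}$, and a Dirichlet-type simplex integral producing the Gamma ratio as in Lemma \ref{lemma_gamma}), but it has a genuine gap at the single step that actually matters: where the $N''$ in the exponent $(d+\eta)/2N''$ comes from. You assert that after the Gaussian integration one gets $\prod_j(\mathrm{gap}_j)^{-(d+\eta)/2N''}$ because ``the variance lower bound distributes the decay among the $N''$ coordinate directions,'' but no mechanism is given, and the naive version of your argument does not produce it. If you order the $2m$ time points along one coordinate and telescope, the quadratic-form lower bound gives, per gap, a factor $(\mathrm{gap}_j)^{-(d+\eta)/2}$ -- not $-(d+\eta)/2N''$ -- and the subsequent time integration then converges only for $d+\eta<2$, which misses the claimed range $\eta<N''-\tfrac d2$ for every $d\geq 2$. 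Worse, the telescoping itself is problematic in a multi-parameter setting: the $2m$ points cannot in general be ordered simultaneously in all $N''$ coordinates, so a single permutation $\sigma$ and a single substitution $u_j=\sum_{k\geq j}\beta_{\sigma(k)}$ do not capture the full variance bound $\sigma^2\geq\tfrac12\sum_{k=1}^{N''}\min_j|t^k-t^k_{q_j}|$.

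The paper resolves exactly this point by a decoupling that your sketch lacks: it decomposes $\bar W(t+s)=\bar W(t)+\sum_{k=1}^{N''}X_k(s^k\,;t)+R(s\,;t)$ into \emph{independent} one-parameter martingales $X_k$ (one per coordinate of $V$, in the spirit of \cite[eq (4.5)]{khoshnevisan.xiao}), drops $\bar W(t)$ and $R$, and then applies H\"older's inequality with exponent $N''$ to separate the $k$-factors. Each factor is then handled by the classical one-dimensional Berman--Ehm argument (ordering in the $k$-th coordinate only, stationary independent increments as in \eqref{thm_2_lemme_ineq}, the substitution $\gamma_j=\sum_{l\geq j}\beta_l$, and the expansion $\prod_j|\gamma_j-\gamma_{j+1}|^\eta\leq 2^{2m\eta}\sum_{q\in Q}\prod_j|\gamma_j|^{q_j\eta}$), and it is precisely the H\"older power $1/N''$ that converts $(s^k_j-s^k_{j-1})^{-(d+q_j\eta)/2}$ into $(s^k_j-s^k_{j-1})^{-(d+q_j\eta)/2N''}$, making each one-dimensional simplex integral finite exactly when $\eta<N''-\tfrac d2$ and assembling, via Lemma \ref{lemma_gamma}, into the stated Gamma ratio raised to the power $N''$. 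An alternative decoupling (e.g.\ an arithmetic--geometric mean bound $\sum_k g_k\geq N''\prod_k g_k^{1/N''}$ applied inside the Gaussian integral) could serve the same purpose, but some such step must be supplied explicitly; as written, your intermediate bound does not follow and the proof does not close.
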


\begin{proof}
For notational and computational simplicity, we will suppose that $V=\text{Vect}(e_1,\dots,e_{N''})$ and that $\bar{M}=[1,2]^{N''}$. In this case, we can decompose the process $\Bar{W}$ in a similar manner to what has been done to obtain \cite[eq (4.5)]{khoshnevisan.xiao} with, for any $s=(s^1,\dots,s^{N''})$ and $t=(t^1,\dots,t^{N''}) \in V$,
$$ \Bar{W}(t+s) = \bar{W}(t) + \sum_{k=1}^{N''} X_k(s^k\,;t) + R(s\,;t), $$
where the $X_k$ are one-dimensional martingale processes and $R(s\,;t) = \int_{[t,t+s]}d\bar{W}(r)$. Moreover, the variables $\bar{W}(t)$, $X_k(s^k\,;t)$ and $R(s\,;t)$ are independent. With this independence, we obtain for fixed $(t_j)_{j\leq 2m}$ points in $V$, noting $t:=(\min_{j\leq 2m} t_j^1,\dots,\min_{j\leq 2m} t_j^{N''})$ and $s_j := t_j - t$,
\begin{align*}
    \idotsint_{(\R^d)^{2m}}\left| E[e^{i\sum_{j=1}^{2m}\beta_j\cdot\tilde{W}(t+s_j)}] \right|\prod_{j=1}^{2m}|\beta_j|^{\eta}d\beta_j 
    & \leq \idotsint_{(\R^d)^{2m}}\left| \prod_{k=1}^{N''}\E[e^{i\sum_{j=1}^{2m}\beta_j\cdot X_k(s_j^k\,;t)}] \right|\prod_{j=1}^{2m}|\beta_j|^{\eta}d\beta_j \\
    & \leq \prod_{k=1}^{N''}\left( \idotsint_{(\R^d)^{2m}} \left|\E[e^{i\sum_{j=1}^{2m}\beta_j\cdot X_k(s_j^k\,;t)}]\right|^{N''} \prod_{j=1}^{2m}|\beta_j|^{\eta}d\beta_j \right)^{\frac{1}{N''}},
\end{align*}
where we used Hölder's inequality in the last line.\\
From the characteristic function of $W$ and the fact that $\Gamma$ is $N$-lipschitz we obtain, for fixed $\beta \in \R^d$, $k$ and $s,t \in V$,
\begin{align}
    \label{thm_2_lemme_ineq}
    \E[e^{i\beta\cdot X_k(s^k\,;t)}] \leq  e^{-N^{N''}\tfrac{s^k}{t^k}\lambda([0,t]^{N''}) |\beta|^2}. 
\end{align} 
Even if it means rewriting the sum in a different order, we can suppose without loss of generality that, for any fixed $k$, it holds $s_j^k>s^k_{j-1}$ for any $j\leq 2m$. This way, with the convention that $s^k_0 = 0$, we have 
$$\sum_{j=1}^{2m}\beta_j\cdot X_k(s_j^k\,;t) = \sum_{j=1}^{2m} \sum_{l=1}^j \beta_j\cdot \big[ X_k(s_l^k\,;t) - X_k(s^k_{l-1}\,;t)\big] = \sum_{l=1}^{2m} \sum_{j=l}^{2m} \beta_j\cdot \big[ X_k(s_l^k\,;t) - X_k(s^k_{l-1}\,;t)\big].$$
Thus, we compute, using that $X_k$ has independent and stationary increments
\begin{align*}
    I_k :&=\idotsint_{(\R^d)^{2m}} \left|\E\left[e^{i\sum_{j=1}^{2m}\beta_j\cdot X_k(s_j^k\,;t)}\right]\right|^{N''} \prod_{j=1}^{2m}|\beta_j|^{\eta}d\beta_j\\ 
    &= \idotsint_{(\R^d)^{2m}} \left|\E\left[\prod_{l=1}^{2m}e^{i\sum_{j=l}^{2m}\beta_j\cdot [X_k(s_l^k\,;t)-X_k(s_{l-1}^k\,;t)] }\right]\right|^{N''} \prod_{j=1}^{2m}|\beta_j|^{\eta}d\beta_j \\
    &= \idotsint_{(\R^d)^{2m}} \prod_{l=1}^{2m}\left|\E\left[e^{i\sum_{j=l}^{2m}\beta_j\cdot [X_k(s_l^k\,;t)-X_k(s_{l-1}^k\,;t)] }\right]\right|^{N''} \prod_{j=1}^{2m}|\beta_j|^{\eta}d\beta_j \\
    &= \idotsint_{(\R^d)^{2m}} \prod_{l=1}^{2m}\left|\E\left[e^{i\sum_{j=l}^{2m}\beta_j\cdot X_k(s_l^k-s^k_{l-1}\,;t)}\right]\right|^{N''} \prod_{j=1}^{2m}|\beta_j|^{\eta}d\beta_j \\
    &\leq \idotsint_{(\R^d)^{2m}} \prod_{l=1}^{2m} e^{-N''N^{N''}\tfrac{s^k_l-s^k_{l-1}}{t^k}\lambda([0,t]^{N''}) |\sum_{j=l}^{2m}\beta_j|^2} \prod_{j=1}^{2m}|\beta_j|^{\eta}d\beta_j\\
    &= \idotsint_{(\R^d)^{2m}} \prod_{j=1}^{2m} e^{-N''N^{N''}\tfrac{s^k_j-s^k_{j-1}}{t^k}\lambda([0,t]^{N''}) |\gamma_j|^2} |\gamma_j-\gamma_{j+1}|^{\eta}d\gamma_j.
\end{align*}
where the penultimate inequality comes from \eqref{thm_2_lemme_ineq} and the last equality is just a substitution of the variables with the notation $\gamma_j := \sum_{l=j}^{2m} \beta_l$ and the convention $\gamma_{2m+1}=0$.\\
Let $Q:=\{(q_1,\dots,q_{2m}) \in \{0,1,2\}^{2m} ~:~\sum_{j=1}^{2m} q_j = 2m \}$. It can be shown that 
\begin{align*}
\prod_{j=1}^{2m}|\gamma_j-\gamma_{j+1}|^{\eta} \leq  \prod_{j=1}^{2m}(2\max(|\gamma_j|\,;|\gamma_{j+1}|))^\eta \leq  2^{2m\eta} \sum_{q\in Q}\prod_{j=1}^{2m}|\gamma_j|^{q_j\eta},
\end{align*}
such that noting $K := \max_{q\in \{0,1,2\}} \int_{\R^d}e^{-|\gamma|^2}|\gamma|^{q\eta}d\gamma < \infty$ and using a substitution, we obtain
\begin{align*}
    I_k &\leq \sum_{q\in Q} \prod_{j=1}^{2m} 2^\eta \int_{\R^d} e^{-N''N^{N''}\tfrac{s^k_j-s^k_{j-1}}{t^k}\lambda([0,t]^{N''}) |\gamma|^2}|\gamma|^{q_j\eta}d\gamma \\
    &\leq \sum_{q\in Q} \prod_{j=1}^{2m} 2^\eta K \left(N''N^{N''}\frac{s^k_j-s^k_{j-1}}{t^k}\lambda([0,t]^{N''})\right)^{-(d+q_j \eta)/2}\\
    &= \left(2^\eta K\right)^{2m} \left(N''N^{N''}\frac{\lambda([0,t]^{N''})}{t^k}\right)^{-m(d+\eta)} \sum_{q\in Q} \prod_{j=1}^{2m}  \left(s^k_j-s^k_{j-1}\right)^{-(d+q_j \eta)/2},
\end{align*}
where we used that $\sum_{j=1}^{2m} q_j = 2m$ in the last equality.\\
Thus, we get, noting $S^k:=\{0=s^k_0\leq s_1^k\leq \dots\leq s_{2m}^k\leq 1\}$ 
\begin{align*}
    J_{\bar{M}}(2m,\eta) &= \int_{\bar{M}^{2m}}\int_{(\R^d)^{2m}}\left| \E e^{i\sum_{j=1}^{2m}\beta_j\cdot \tilde{W}(t^j)} \right|\prod_{j=1}^{2m}|\beta_j|^\eta d\beta_j dt_j \\
    & \leq \prod_{k=1}^{N''} (2m)! \int_{S^k} \left( \left(2^\eta K\right)^{2m} \left(N''N^{N''}\frac{\lambda([0,t]^{N''})}{t^k}\right)^{-m(d+\eta)} \sum_{q\in Q} \prod_{j=1}^{2m}  \left(s^k_j-s^k_{j-1}\right)^{-(d+q_j \eta)/2} \right)^{1/N''} ds^k_j\\
    &\leq (2m)!^{N''} (2^{\eta}K)^{2m}(N''N^{N''})^{-m(d+\eta)} \left( \sum_{q\in Q} \int_S \prod_{j=1}^{2m}  \left(s_j-s_{j-1}\right)^{-(d+q_j \eta)/2N''} \right)^{N''} ds_j.
\end{align*}
Finally, using Lemma \ref{lemma_gamma} we showed
\begin{align*}
    J_{\bar{M}}(2m,\eta) &\leq (2m)!^{N''} C^{2m} \left( \sum_{q\in Q} \frac{\prod_{j=1}^{2m} g\left( 1-\frac{d+q_j\eta}{2N''} \right)}{g\left( 1+2m(1-\frac{d+\eta}{2N''}) \right)} \right)^{N''},
\end{align*}
with $C:=2^{\eta}K(N''N^{N''})^{-\frac{d}{2}}$. Using that $\# Q \leq 3^{2m}$ and that $g\left( 1-\frac{d+q_j\eta}{2N''} \right) \leq g\left( 1-\frac{d+2\eta}{2N''} \right) < \infty$ for any $\eta < N'' - \frac{d}{2}$, we get the desired result.
\end{proof}

\noindent
Moreover, we will need the following result, whose proof can be adapted from the proof of Theorem 5.1 in \cite{berman1969local}.
\begin{lemma}
\label{thm2_1_lemma2}
Fix a projection $p$ and a point $s$ and let $\Bar{W}$ be the associated process. Let $M \subset \R_+^N$ be a bounded rectangle and $\Bar{M}$ its projection by $q$ onto $V$. For $m \in \N$ and $x,y \in \R^d$ close enough, we have for any $0<\alpha_0 <1$ and $0<\eta<\min(1,N''-\frac{d}{2})$ that there exists a constant $C>0$ such that,
$$ \E\left[\left|L_M(x\,;s,p)- L_M(y\,;s,p) \right|^{2m}\right] \leq C (|x-y|^{\alpha_0})^{2m\eta}J_{\bar{M}}(2m,\eta). $$
\end{lemma}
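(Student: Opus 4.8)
The plan is to prove Lemma \ref{thm2_1_lemma2} by the classical Fourier-analytic moment estimate for local times, following Berman's approach. Recall that the local time $L_M(x;s,p)$ of $\bar W$ around $x$ admits the representation $L_M(x;s,p) = (2\pi)^{-d}\int_{\R^d} \int_{\bar M} e^{-i\xi\cdot x}e^{i\xi\cdot\bar W(t)}\,dt\,d\xi$ (understood in the appropriate $L^2$ or density sense, with $\bar M$ the projection of $M$ onto $V$ by $q$). Consequently, the increment $L_M(x;s,p)-L_M(y;s,p)$ has the Fourier representation $(2\pi)^{-d}\int_{\R^d}\int_{\bar M}\big(e^{-i\xi\cdot x}-e^{-i\xi\cdot y}\big)e^{i\xi\cdot\bar W(t)}\,dt\,d\xi$. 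First I would raise this to the $2m$-th power, expand into a $2m$-fold product, and take the expectation, interchanging expectation and integration by Tonelli once absolute values are in place.

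Second, the key pointwise bound is $|e^{-i\xi\cdot x}-e^{-i\xi\cdot y}| \le 2^{1-\alpha_0}|\xi|^{\alpha_0}|x-y|^{\alpha_0}$ for any $0<\alpha_0<1$, obtained by interpolating between the trivial bound $|e^{-i\xi\cdot x}-e^{-i\xi\cdot y}|\le 2$ and the Lipschitz bound $|e^{-i\xi\cdot x}-e^{-i\xi\cdot y}|\le |\xi||x-y|$. Applying this to each of the $2m$ factors $|e^{-i\beta_j\cdot x}-e^{-i\beta_j\cdot y}|$ produces a factor $(2^{1-\alpha_0}|x-y|^{\alpha_0})^{2m}\prod_{j=1}^{2m}|\beta_j|^{\alpha_0}$. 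Then I would take absolute values inside the expectation of the product of exponentials: $\big|\E\prod_{j=1}^{2m} e^{i\beta_j\cdot\bar W(t^j)}\big| = \big|\E e^{i\sum_j \beta_j\cdot\bar W(t^j)}\big|$. Putting everything together,
\begin{align*}
\E\big[\,|L_M(x;s,p)-L_M(y;s,p)|^{2m}\,\big] \le (2\pi)^{-2md}\,(2^{1-\alpha_0})^{2m}\,|x-y|^{2m\alpha_0}\!\int_{\bar M^{2m}}\!\!\int_{(\R^d)^{2m}}\!\!\Big|\E e^{i\sum_j\beta_j\cdot\bar W(t^j)}\Big|\prod_{j=1}^{2m}|\beta_j|^{\alpha_0}\,d\beta_j\,dt_j,
\end{align*}
and the last integral is exactly $J_{\bar M}(2m,\alpha_0)$ from Lemma \ref{thm2_1_lemma1}, provided we take $\eta=\alpha_0$. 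Since the statement requires $0<\eta<\min(1,N''-d/2)$, one sets $\alpha_0=\eta$ in that range and absorbs $(2\pi)^{-2md}(2^{1-\alpha_0})^{2m}$ and any combinatorial constants into $C^{2m}$ (or, being slightly generous, into a constant $C$ with the factor $(|x-y|^{\alpha_0})^{2m\eta}$ written as in the statement — note the exponent $2m\eta$ matches $2m\alpha_0$ with $\alpha_0=\eta$). This yields precisely the claimed bound $\E[|L_M(x;s,p)-L_M(y;s,p)|^{2m}] \le C(|x-y|^{\alpha_0})^{2m\eta}J_{\bar M}(2m,\eta)$.

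The main obstacle is rigor around the local time representation: one must justify that $\bar W$ restricted to $M$ actually has a jointly measurable local time, that the occupation density formula and the inverse-Fourier representation are valid, and that the interchange of $\E$ with the (a priori only conditionally convergent) $\xi$-integral is legitimate — this is precisely where Berman's Theorem 5.1 argument (cited in the lemma statement) is invoked, and the convergence of $J_{\bar M}(2m,\eta)$ established in Lemma \ref{thm2_1_lemma1} is what guarantees everything is finite and the manipulations are justified. A secondary technical point is that $\bar M$ should be the $q$-projection of $M$ onto $V$, and since $\Gamma(s,p)$ is $N$-Lipschitz the occupation time of $\bar W=W\circ\Gamma$ over $M$ is comparable to an integral over $\bar M$, so working with $J_{\bar M}$ rather than an integral over $M$ introduces only harmless constants absorbed into $C$. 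Once these foundational facts are in place, the computation above is routine.
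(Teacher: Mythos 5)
Your proposal is the standard Berman Fourier-analytic moment estimate, which is exactly what the paper intends here (it gives no proof of its own, simply pointing to Theorem 5.1 of Berman's paper), so the approach coincides with the paper's. One bookkeeping remark: your argument with Hölder exponent $\eta$ yields $\E[|L_M(x)-L_M(y)|^{2m}]\le C\,|x-y|^{2m\eta}J_{\bar M}(2m,\eta)$, and since the statement's factor is $(|x-y|^{\alpha_0})^{2m\eta}=|x-y|^{2m\alpha_0\eta}$ with $\alpha_0<1$, the stated bound follows for all $0<\alpha_0<1$ from the monotonicity $|x-y|^{2m\eta}\le|x-y|^{2m\alpha_0\eta}$ when $|x-y|\le 1$ (the ``close enough'' hypothesis), rather than from your ``set $\alpha_0=\eta$'' identification, which as written would give exponent $2m\eta^2$ and is not a literal match.
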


\noindent We are now ready to prove Proposition \ref{thm2_1}. Let $0<\alpha<\frac{1}{2}\min(1,N''-\frac{d}{2})$. Using the Markov inequality, it can be shown that it is enough to prove that there exists $C>0$ such that 
\begin{align}
    \label{thm2_1_obj}
    \E\left(\exp{\frac{|L_M(x\,;s,p)- L_M(y\,;s,p)|}{|x-y|^{2\alpha}}}\right) \leq C.
\end{align}
With Fubini's theorem and using that $\exp=\cosh + \sinh \leq 2\cosh$ and writing the Taylor expansion of the function, we are justified to control only the even terms of the series. Thus, we compute, using Lemma \ref{thm2_1_lemma1} and Lemma \ref{thm2_1_lemma2}, for $0<\alpha_0 <1$ and $0<\eta<\min(1,N''-\frac{d}{2})$,
\begin{align*}
\frac{\E\left[|L_M(x\,;s,p)- L_M(y\,;s,p)|^{2m}\right]}{|x-y|^{4\alpha m}(2m)!} & \leq \dfrac{C^{2m}|x-y|^{2m\alpha_0\eta}(2m)!^{N''}}{|x-y|^{4m\alpha}(2m)!g\left( 1+2m(1-\frac{d+\eta}{2N''}) \right)^{N''}} \\
&\leq C^{2m} |x-y|^{2m(\eta\alpha_0-2\alpha)}\frac{(2m)!^{N''-1}}{g\left( 1+2m(1-\frac{d}{2N''}) \right)^{N''}}.
\end{align*}
With Stirling's formula we can show that the fraction is bounded by a constant $C>0$. Moreover, we can choose the values of $\alpha_0$ and $\eta$ such that $\alpha_1:=\eta\alpha_0-2\alpha>0$. We get,
$$ \E\left(\exp{\frac{|L_M(x\,;s,p)- L_M(y\,;s,p)|}{|x-y|^{2\alpha}}}\right) \leq 2\sum_{m\geq 0} \frac{\E\left[|L_M(x\,;s,p)- L_M(y\,;s,p)|^{2m}\right]}{|x-y|^{4\alpha m}(2m)!} \leq \sum_{m\geq 0} (C(\eta,\alpha_0)|x-y|^{\alpha_1})^{2m}.$$
This sum converges if $x$ and $y$ are close enough, such that \eqref{thm2_1_obj} holds.

\subsection{Proof of Proposition \ref{thm2_2}}
Since the proof can be immediately adapted to the general case, we will prove the theorem for $x=0$. Each $(s,p)$ with $s\in\R^{N'}$ and $p$ an orthogonal projection from $\R^N$ to $\R^{N'}$ can be associated in a one-to-one relation with $v$ an affine sub-spaces of $\R^N$ by the bijection $V=p^{-1}(s)$. This relation allows us to define a well-suited distance with
\begin{align*}
    d\big((s,p),(s',p')\big) := \inf &\Big\{ \max\{ |c_{l,k}-c_{l,k}'| ~:~l=1,\dots,N \text{ and }k=0,\dots,N''\} \\
    :~~&  \forall k=1,\dots N'', ~\sum_{l=1}^N (c_{l,k})^2 = \sum_{l=1}^N (c'_{l,k})^2 = 1, \\
    & p^{-1}(s) = (c_{1,0},\dots,c_{N,0}) + \text{Vect}\left( \sum_{l=1}^N c_{l,1} e_l,\dots,\sum_{l=1}^N c_{l,N''} e_l \right), \\
    & \left. p'^{-1}(s') = (c'_{1,0},\dots,c'_{N,0}) + \text{Vect}\left( \sum_{l=1}^N c'_{l,1} e_l,\dots,\sum_{l=1}^N c'_{l,N''} e_l \right) \right\}.
\end{align*} 
For the rest of the proof and for notational and computational simplicity, we will suppose that $M=[1,2]^N$. Moreover, we will assimilate the metric space of the $(s,p)$ and the metric space of the plans $V=p^{-1}(s)$ associated, noted $\Vv$.\\
For any $n\geq 0$ we introduce the dyadic set 
\begin{align}
    \label{diadic_set_A_n}
    A_n := \left\{ \left(\tfrac{i_{1,0}}{2^m},\dots,\tfrac{i_{N,0}}{2^n}\right) + \text{Vect}\left( \sum_{l=1}^N \tfrac{i_{l,1}}{2^n}e_l,\dots,\sum_{l=1}^N \tfrac{i_{l,N''}}{2^n}e_l\right) ~:~\forall l,k \text{ we have} ~-2^n\leq i_{l,k} \leq 2^n \right\},
\end{align} 
and we can easily see that $\#A_n \leq C2^n$ for a certain constant $C>0$, and that the balls $(B(x,2^{-n}))_{x\in A_n}$ cover $\Vv$.\\
We begin by proving the following lemma
\begin{lemma}
\label{thm2_2_lemma}
For any $0<\alpha<\frac{1}{4}\min(1,N''-\frac{d}{2})$, there exists a constant $c>0$ and a (random) rank $n_0$ such that if $n\geq n_0$, for any couple $(s,p), (s',p') \in A_n$ with $d\big((s,p),(s',p')\big)=2^{-n}$ it holds
	$$\P(|L_M(0\,;s,p)-L_M(0\,;s',p')| \geq (n2^{-n})^\alpha ) \leq  e^{-c(n2^{-n})^\alpha}.$$
\end{lemma}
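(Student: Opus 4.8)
The plan is to reduce the statement about two different pairs $(s,p)$ and $(s',p')$ — equivalently, two nearby parametrizing subspaces $V=p^{-1}(s)$ and $V'=p'^{-1}(s')$ in $\Vv$ — to the single-process continuity estimate already established in Proposition \ref{thm2_1}. The key observation is that $L_M(0;s,p)$ is the local time at $0$ of $\Bar W(\cdot\,;s,p)=W\circ\Gamma(s,p)$, and when $d\big((s,p),(s',p')\big)=2^{-n}$ the two parametrization maps $\Gamma(s,p)$ and $\Gamma(s',p')$ are uniformly $C2^{-n}$-close on $\bar M$ (since the defining coefficients $c_{l,k}$ differ by at most $2^{-n}$ and $\Gamma$ depends smoothly on them). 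Hence the two processes $\Bar W(\cdot\,;s,p)$ and $\Bar W(\cdot\,;s',p')$ are obtained by composing the \emph{same} Brownian sheet $W$ with two maps that are $C2^{-n}$-close in sup-norm, and by the modulus of continuity of $W$ on the bounded rectangle $M$ (Corollary \ref{cor_parametrization}) they are, with overwhelming probability, uniformly within $C(n2^{-n})^{1/2}$ of each other, or more conservatively within a small power of $2^{-n}$ up to a logarithmic factor.

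The first step is therefore to make this comparison quantitative: introduce the auxiliary process $\Bar W'(\cdot)=W\circ\Gamma(s',p')$ and compare $L_M(0;s,p)$ with $L_M(0;s',p')$ by the triangle inequality through an intermediate quantity, namely the local time at $0$ of $\Bar W$ but \emph{of the shifted target} — i.e. compare $L_M(0;s,p)$ to $L_M(y;s,p)$ for an appropriate small $y$ that absorbs the discrepancy $\sup_{\bar M}|\Bar W - \Bar W'|$, and then compare $L_M(y;s,p)$ with $L_M(0;s',p')$. The second comparison is controlled because on the event where $\sup_{\bar M}|\Bar W-\Bar W'|\le \delta_n$ (with $\delta_n$ a small power of $2^{-n}$ times a log factor, an event of probability $\ge 1-e^{-c(n2^{-n})^{-\alpha}}$ or better by Gaussian tail bounds on the sheet's modulus of continuity), the occupation measures of $\Bar W$ and $\Bar W'$ agree up to this spatial shift, so $|L_M(y;s,p)-L_M(0;s',p')|$ is itself bounded by a local-time increment of $\Bar W$ over a ball of radius $\delta_n$. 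The first comparison, $|L_M(0;s,p)-L_M(y;s,p)|$ with $|y|\le\delta_n$, is exactly the content of Proposition \ref{thm2_1}: choosing the exponent $\alpha$ in the statement strictly below $\tfrac14\min(1,N''-\tfrac d2)$ leaves room so that $\delta_n^{\alpha'}\ge (n2^{-n})^\alpha$ for the $\alpha'<\tfrac12\min(1,N''-\tfrac d2)$ in Proposition \ref{thm2_1}, and the exponential bound $e^{-c\delta_n^{-\alpha'}}\le e^{-c(n2^{-n})^\alpha}$ transfers.

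Concretely the steps are: (i) fix $\alpha<\tfrac14\min(1,N''-\tfrac d2)$ and set $\delta_n:=(n2^{-n})^{\beta}$ for a suitable $\beta\in(\alpha,\tfrac12)$; (ii) show $\P\big(\sup_{\bar M}|W\circ\Gamma(s,p)-W\circ\Gamma(s',p')|>\delta_n\big)\le e^{-c(n2^{-n})^{-\alpha}}$ using Gaussian concentration / the uniform modulus of continuity of $W$ together with the Lipschitz dependence of $\Gamma$ on $(s,p)$, uniformly over all admissible pairs at distance $2^{-n}$; (iii) on the complementary event, bound $|L_M(0;s',p')-L_M(0;s,p)|$ by $\sup_{|y|\le\delta_n}|L_M(y;s,p)-L_M(0;s,p)|$ (absorbing the shift into the target and the direction equally, which is where one uses that the occupation density of $\Bar W'$ at $0$ equals that of $\Bar W$ at a point within $\delta_n$); (iv) invoke Proposition \ref{thm2_1} with exponent $\alpha'$ and a union bound over a $\delta_n$-net of the ball $B(0,\delta_n)$ in $\R^d$ (whose cardinality is only polynomial in $2^n$, hence harmless against the doubly-exponential-in-$n$ bound) to get $\P\big(\sup_{|y|\le\delta_n}|L_M(y;s,p)-L_M(0;s,p)|\ge(n2^{-n})^\alpha\big)\le e^{-c(n2^{-n})^\alpha}$; (v) combine (ii) and (iv). The main obstacle I expect is step (iii): making precise the claim that changing the parametrizing subspace by $C2^{-n}$ changes the local time at $0$ by no more than a local-time increment of a \emph{single} process over a small spatial ball — one has to be careful that $\Gamma(s,p)$ and $\Gamma(s',p')$ need not have the same domain $\bar M$ after composing with $q$, so a small boundary layer of $M$ must be discarded, and the identification of the two occupation measures holds only up to an additive error supported on a set of small Lebesgue measure, which then has to be shown not to contribute more than the stated increment. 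Everything else is a routine combination of Gaussian tail estimates and the already-proven Proposition \ref{thm2_1}.
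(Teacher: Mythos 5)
Your overall strategy is the same as the paper's: compare the two processes $W\circ\Gamma(s,p)$ and $W\circ\Gamma(s',p')$ through the uniform closeness coming from the modulus of continuity of $W$ and the Lipschitz dependence of the parametrization, and then reduce the difference of the two local times at $0$ to spatial increments of local time over a ball of radius of order $n2^{-n/2}$, controlled by Proposition \ref{thm2_1}. But your step (iii) contains a genuine gap, and it is the crux of the lemma. Uniform closeness $\sup_{\bar M}|\Bar W-\Bar W'|\le\delta_n$ does \emph{not} imply that the occupation density of $\Bar W'$ at $0$ equals that of $\Bar W$ at some point within $\delta_n$: the two paths are not related by a fixed spatial translation, so there is no pointwise identity between the two local times, and $|L_M(0;s',p')-L_M(0;s,p)|$ cannot be bounded by an increment of $L_M(\cdot\,;s,p)$ alone. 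What the closeness does give is a comparison of \emph{occupation measures}: $\int_{B(0,\epsilon-\delta)}L_M(x;s',p')\,dx\le\int_{B(0,\epsilon)}L_M(x;s,p)\,dx$ and symmetrically, whence $\inf_{B(0,\epsilon)}L_M(\cdot\,;s',p')\le 2^d\sup_{B(0,\epsilon)}L_M(\cdot\,;s,p)$ and vice versa. Passing from these inf/sup comparisons back to the values at $0$ forces you to control the oscillation $\sup_{x\in B(0,\epsilon)}|L_M(x;\cdot)-L_M(0;\cdot)|$ for \emph{both} pairs $(s,p)$ and $(s',p')$, not just for one of them; this is exactly how the paper proceeds, and it costs nothing since Proposition \ref{thm2_1} applies to each fixed pair.

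A second, smaller issue is step (iv): a single-scale net of $B(0,\epsilon)$ plus a union bound does not control the supremum of the local-time oscillation over the whole ball, because Proposition \ref{thm2_1} only bounds increments between two fixed points and you have no quantitative modulus to pass from net points to arbitrary points. The paper handles this with a multi-scale chaining argument: dyadic grids $B_k\subset B(0,\epsilon)$ for all $k\ge 0$, an auxiliary exponent $\alpha<\alpha'<\tfrac12\min(1,N''-\tfrac d2)$ to make the geometric series $\sum_k\epsilon^\alpha 2^{-\alpha'k}$ converge, and almost-sure continuity of $x\mapsto L_M(x)$ (Kolmogorov's criterion via Lemma \ref{thm2_1_lemma2}) to justify the telescoping along the chain. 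Your quantitative Gaussian-tail version of step (ii) is a harmless variant of the paper's use of Corollary \ref{cor_parametrization} (which yields an almost-sure bound past a random rank, hence the random $n_0$ in the statement), but without repairing (iii) and replacing the single-scale net in (iv) by chaining, the argument does not go through.
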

\begin{proof}
Fix for a moment an element $(s,p)\in\cup_{n\geq 0} A_n$. We note $\Gamma$ the parametrization associated to $V=p^{-1}(s)$, with the notation from Remark \ref{remark_parametrization}. Since we will from now on consider only $(s',p')$'s close to $(s,p)$, we can use the same parametrization $\Gamma$ for the elements $(s',p')$. The only consequence is that the lipschitz constant in Lemma \ref{lemma_parametrization} to be $2N$, which will not change anything to our computations. We will note $\tilde{W}$ the process associated to $V:=p^{-1}(s)$ and $\tilde{W}'$ the process associated to $V':=p'^{-1}(s')$ (both parametrized by $\Gamma$). For notational and computational simplicity, we will suppose that $V=\text{Vect}(e_1,\dots,e_{N''})$ and that $q(M)=:\bar{M}=[1,2]^{N''}$.\\
With Corollary \ref{cor_parametrization}, we can show that there exists a (random) rank $n_0$ and a constant $C_0>0$ such that almost surely for every $t\in\bar{M}$,
\begin{align}
\label{modulus_continuity_thm2}
    |W(t)-\tilde{W}(t) | < C_0n2^{-n/2}. 
\end{align}
Let $\delta:=C_0n2^{-n/2}$ and $\epsilon=2\delta$, from \eqref{modulus_continuity_thm2} we can deduce that almost surely, for any $t \in \bar{M}$,
$$\{\tilde{W}(t)\in B(0,\epsilon-\delta)\} \subset \{W(t)\in B(0,\epsilon)\}.$$
Then from the definition of local time,
$$\int_{B(0,\epsilon-\delta)} L_M(x\,;s',p')d x \leq \int_{B(0,\epsilon)} L_M(x\,;s,p)d x,$$
which implies that
$$ \inf_{x\in B(0,\epsilon)}L_M(x\,;s',p')\leq \inf_{x\in B(0,\epsilon-\delta)}L_M(x\,;s',p')\leq 2^d\sup_{x\in B(0,\epsilon)}L_M(x\,;s,p).$$
With the same approach, one can also show that
$$ \inf_{x\in B(0,\epsilon)}L_M(x\,;s,p)\leq \inf_{x\in B(0,\epsilon-\delta)}L_M(x\,;s,p)\leq 2^d\sup_{x\in B(0,\epsilon)}L_M(x\,;s',p').$$
Thus we have
$$ |L_M(0\,;s,p)-L_M(0\,;s',p')| \leq 2^d\left[\sup_{x\in B(0,\epsilon)} |L_M(x\,;s,p)-L_M(0\,;s,p)| + \sup_{x\in B(0,\epsilon)} |L_M(x\,;s',p')-L_M(0\,;s',p')|\right].$$
Therefore, let $0<\alpha<\frac{1}{2}\min(1,N''-\frac{d}{2})$ and we compute for $n\geq n_0$,
\begin{align*}
    &\P\left( |L_M(0\,;s,p)-L_M(0\,;s',p')| \geq \frac{2^{d+1}}{2^{\alpha'}-1}(2C_0n2^{-n/2})^\alpha \right) \\
    \leq ~ & \P\left( \sup_{x\in B(0,\epsilon)} |L_M(x\,;s,p)-L_M(0\,;s,p)| + \sup_{x\in B(0,\epsilon)} |L_M(x\,;s',p')-L_M(0\,;s',p')| \geq \frac{2}{2^{\alpha'}-1
    }\epsilon^\alpha\right) \\
    \leq ~ & \P\left( \sup_{x\in B(0,\epsilon)} |L_M(x\,;s,p)-L_M(0\,;s,p)|\geq \epsilon^\alpha \right)+\P\left(\sup_{x\in B(0,\epsilon)} |L_M(x\,;s',p')-L_M(0\,;s',p')| \geq  \frac{1}{2^{\alpha'}-1}\epsilon^\alpha \right).
\end{align*}
We will only treat the left term since the right term can be handled similarly, and thus for notational simplicity we allow ourselves to drop the dependencies in $M,s$ and $p$. We define for any $k\geq 0$ the dyadic set $B_k:=\{\epsilon(\frac{i_1}{2^k},\dots,\frac{i_d}{2^k}) ~:~-2^k\leq i_1,\dots,i_d\leq 2^k\}$, and we fix $\alpha<\alpha'<\frac{1}{2}\min(1,N''-\frac{d}{2})$. Notice that from Lemma \ref{thm2_1_lemma2} we can apply Kolmogorov's continuity criterion to obtain continuity of (at least for a modification) $x\mapsto L(x)$, such that we have
\begin{align*}
    \left\{ \sup_{x\in B(0,\epsilon)} |L(x)-L(0)| \geq  \frac{\epsilon^\alpha}{2^{\alpha'}-1} \right\} \subset \left\{ \bigcup_{k\geq 0} \bigcup_{x,y \in B_k \atop |x-y|=\epsilon 2^{-k}} |L(x)-L(y)|\geq \epsilon^\alpha 2^{-\alpha'k} \right\}.
\end{align*}
Indeed, fix $x\in B(0,\epsilon)$ and $(x_k)_{k\geq 0}$ a sequence such that $x_k\in B_k$, $|x_{k+1}-x_k|\leq \epsilon 2^{-(k+1)}$ and $|x-x_k|\leq \epsilon 2^{-k}$. If $\forall k\geq 0,\,\forall x,y \in B_k$ such that $|x-y|=\epsilon 2^{-k}$ it holds $|L(x)-L(y)| < \epsilon^\alpha 2^{-\alpha'k}$, then we obtain 
\begin{align*}
    |L(x)-L(0)| \leq \sum_{k=0}^\infty |L(x_{k+1})-L(x_k)| < \sum_{k=0}^\infty \epsilon^\alpha 2^{-\alpha'(k+1)} = \frac{\epsilon^\alpha}{2^{\alpha'}-1}.
\end{align*}
Thus, we have with Proposition \ref{thm2_1}
\begin{align*}
    \P\left( \sup_{x\in B(0,\epsilon)} |L(x)-L(0)| \geq  \frac{\epsilon^\alpha}{2^{\alpha'}-1} \right) &\leq \P\left(\bigcup_{k\geq 0} \bigcup_{x,y \in B_k \atop |x-y|=\epsilon 2^{-k}} |L(x)-L(y)|\geq \epsilon^\alpha 2^{-\alpha'k}\right)\\
    &\leq \P\left(\bigcup_{k\geq 0} \bigcup_{x,y \in B_k \atop |x-y|=\epsilon 2^{-k}} |L(x)-L(y)| \geq (\epsilon 2^{-k})^\alpha\right)\\
    &\leq \sum_{k\geq 0} 2d2^{kd} \max_{x,y \in B_k \atop |x-y|=\epsilon2^{-k}} \P\left(|L(x)-L(y)|\geq (\epsilon2^{-k})^\alpha\right) \leq \sum_{k\geq 0} 2d2^{kd} e^{-c(2^{-k}\epsilon)^{-\alpha}}.
\end{align*}
We can find a constant $c'>0$ such that $\sum_{k\geq 0} 2d2^{kd} e^{-c(2^{-k}\epsilon)^{-\alpha'}} \leq e^{-c'\epsilon^{-\alpha}}$ and rebranding the constants we obtain the desired result.
\end{proof}

\noindent Now we are ready to prove Proposition \ref{thm2_2}. Let's fix $(s,p)$ and for $n\geq 1$ choose any sequence $(s_n,p_n)\in A_n$ such that $d\big( (s,p),(s_n,p_n)\big) \leq 2^{-n}$. We compute, for $0<\alpha<\frac{1}{2}\min(1,N''-\frac{d}{2})$, using Lemma \ref{thm2_2_lemma}
$$\sum_{n\geq 1} \P(|L_M(0\,;s,p)-L_M(0\,;s_n,p_n)|\geq 2^{-n\alpha}) \leq \sum_{n\geq 1} e^{-c2^{-n\alpha}} <\infty. $$
Thus, using Borel-Cantelli first lemma we can deduce that $\P(\limsup_{n\geq 1} |L_M(0\,;s,p)-L_M(0\,;s_n,p_n)|\geq 2^{-n\alpha}) =0$, which implies that indeed 
$$\P(L_M(0\,;s_n,p_n) \mapsto L_M(0\,;s,p),~\text{for }n\mapsto \infty)=1.$$

\subsection{Proof of Theorem \ref{thm2}}
\label{section_thm2_final}
In this section we show why Proposition \ref{thm2_2} implies Theorem \ref{thm2}. Since $\Vv$ is locally compact, we can suppose without loss of generality that we $(s,p)$ are restricted to a compact subset $K$ of $\Vv$ (see proof of Proposition \ref{thm2_2} for details about the notations).\\
First, fix $(s_0,p_0)\in K$ and let $0<\varepsilon<1$. Note $M_0:=[1,2]^N$ and suppose that the $a>0$ is small enough for 
$$\P(L_{M_0}(0\,;s_0,p_0) > a) > \varepsilon.$$ 
Secondly, fix $0<\alpha<\frac{1}{4}\min(1,N''-\frac{d}{2})$ and from Lemma \ref{thm2_2_lemma} we have that, for any rank $N_0$,
\begin{align*}
    &\P\left( \bigcup_{n\geq N_0} \bigcup_{(s,p),(s',p')\in A_n \atop d((s,p),(s',p')=2^{-n}} |L_{M_0}(0\,;s,p)-L_{M_0}(0\,;s',p')|\geq (n2^{-n})^\alpha \right) \\
    \leq~ & \sum_{n\geq N_0} \P\left( \bigcup_{(s,p),(s',p')\in A_n \atop d((s,p),(s',p')=2^{-n}} |L_{M_0}(0\,;s,p)-L_{M_0}(0\,;s',p')|\geq (n2^{-n})^\alpha \right)\\
    \leq~ & \sum_{n\geq N_0} e^{-c(n2^{-n})^\alpha} <\infty.
\end{align*} 
Thus, we obtained that if we suppose $N_0$ big enough
\begin{align*}
    \P\left( \left\{L_{M_0}(0\,;s_0,p_0) > a\right\}~~\bigcup ~~ \left\{\bigcap_{n\geq N_0} \bigcap_{(s,p),(s',p')\in A_n \atop d((s,p),(s',p')=2^{-n}} |L_{M_0}(0\,;s,p)-L_{M_0}(0\,;s',p')| < (n2^{-n})^\alpha\right\} \right) > \frac{3}{4}\varepsilon.
\end{align*}
One should note that the event inside the probability implies that $L_{M_0}(0\,;s,p) > a - \sum_{n\geq N_0} (n2^{-n})^\alpha$ for all $(s,p)\in \cup_{n\geq 0}$ such that $d((s_0,p_0),(s,p)) \leq 2^{-N_0}$. Fix $0<\varphi<a$, and we can always suppose $N_0$ big enough for $\varphi<a-\sum_{n\geq N_0} (n2^{-n})^\alpha$. Thus, we showed that, for any $0<\varepsilon<1$ and $\varphi>0$, there exists a rank $N_0$ such that
$$\P\big(\forall\,(s,p)\in  \cup_{n\geq 0} A_n,\text{ such that }d((s_0,p_0),(s,p))\leq 2^{-N_0},~L_{M_0}(0\,;s,p) > \varphi\big)>\frac{3}{4}\varepsilon.$$
Thus, with Proposition \ref{thm2_2} we can deduce that
$$\P\big(\forall\,(s,p)\in  K,\text{ such that }d((s_0,p_0),(s,p))\leq 2^{-N_0},~L_{M_0}(0\,;s,p) \geq \varphi\big)>\frac{3}{4}\varepsilon.$$
In particular, we proved that for any $(s_0,p_0)\in K$, there exists an open ball $B(s_0,p_0)\subset \Vv$ centered in $(s_0,p_0)$ and of radius $r(s_0,p_0)$ such that 
\begin{align}
    \label{thm2_proof_eq1}
    \P(\forall (s,p)\in B(s_0,p_0),~L_{M_0}(0\,;s,p)\geq 1)>\frac{2}{3}.
\end{align}
We note $E_0$ the event in \eqref{thm2_proof_eq1}, $(B_t)_{t\geq 0}$ the process defined by $B_t := W([0,t])^N$ and we suppose $b>0$ big enough for
\begin{align}
    \label{thm2_proof_eq2}
    \P(\inf(t\geq \tfrac{1}{2}~:~|B_t|\leq b\sqrt{t}) ~\leq 1)>\frac{2}{3}.
\end{align}
We note $F_0$ the event in \eqref{thm2_proof_eq2} and we notice that $\P(E_0\cap F_0)>\frac{1}{3}$. Moreover, for any $n\geq 1$ we define $M_n:=[2^{2n},2^{2n+1}]^N$, and the events
\begin{align*}
    E_n:= \big\{\forall (s,p)\in B(s_0,p_0),~L_{M_n}(0\,;s,p)\geq 2^{2nN} \big\} \quad \text{and} \quad F_n := \big\{ \inf(t\geq 2^{2n-1}~:~|B_t|\leq b\sqrt{t}) ~\leq 2^{2n} \big\}.
\end{align*}
From the scaling property of the brownian sheet we deduce that $\P(E_n~|~F_n) \geq \P(E_n\cap F_n) = \P(E_0\cap F_0) > \frac{1}{3}$. Considering the filtration $(\Ff_n)_{n\geq 0}$ defined by $\Ff_n := \sigma(W(A) ~:~A\subset [0,2^{2n}]^N)$, and since almost surely $F_n$ occurs infinitely often, we can apply Lemma \ref{lemma_borel_cantelli} to conclude that $\P(\limsup E_n) =1$. Moreover, since the event $E_n$ implies that for all $(s,p)$ in $B(s_0,p_0)$, we have $p^{-1}(s) \cap \Zz \neq \emptyset$, we proved that 
$$\P(\forall (s,p)\in B(s_0,p_0),~p^{-1}(s) \cap \Zz \neq \emptyset)=1.$$
Finally, $K$ is compact and thus can be covered by a finite number of balls $B(s_0,p_0)$. This gives the desired result.

\newpage
\appendix

\section{Miscellaneous lemmas}

We begin by a key result from \cite[Theorem 2.4]{orey1973sample} which will often be used in this section. 
\begin{lemma}
\label{lem_OreyPruitt_original}
If $W$ is a $(N,d)$-Brownian sheet, there exists a certain random rank $n_0$, such that for every $n\geq n_0$, we have that $|W(\mathbf{t})-W(\mathbf{s})|\leq n2^{-n/2}$ as long as $\|\mathbf{t}-\mathbf{s}\|_\infty<2^{-n}$.
\end{lemma}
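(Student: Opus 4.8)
\textbf{Proof plan for Lemma \ref{lem_OreyPruitt_original}.}

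The plan is to reduce the statement to a Borel--Cantelli argument over a dyadic net, using Gaussian tail estimates for the increments of the Brownian sheet. First I would fix attention on a bounded cube, say $[0,1]^N$ (the general case follows by covering $\R_+^N$ with countably many such cubes and scaling), and for each $n$ introduce the dyadic grid $G_n$ of points with coordinates in $2^{-n}\Z$ inside $[0,1]^N$, so that $\#G_n \leq C 2^{nN}$. The key observation is that, for the Brownian sheet, the increment $W(\mathbf t) - W(\mathbf s)$ over a box is a centred Gaussian vector whose variance is controlled by the Lebesgue measure of the symmetric-difference region between the ``rectangles of dependence'' of $\mathbf t$ and $\mathbf s$; in particular, when $\|\mathbf t - \mathbf s\|_\infty \leq 2^{-n}$ and both points lie in a bounded cube, this variance is $O(2^{-n})$. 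Hence for a single pair of neighbouring grid points at scale $n$, $\P(|W(\mathbf t)-W(\mathbf s)| > \tfrac{1}{2} n 2^{-n/2}) \leq C\exp(-c n^2)$ by the standard Gaussian tail bound.

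Next I would run the usual chaining/Borel--Cantelli step. Summing the above bound over all $O(2^{nN})$ pairs of adjacent grid points at scale $n$ gives a probability $\leq C 2^{nN}\exp(-cn^2)$, which is summable in $n$; by Borel--Cantelli there is a random rank $n_1$ such that for all $n \geq n_1$ every pair of adjacent points of $G_n$ has increment at most $\tfrac12 n 2^{-n/2}$. To pass from grid points to arbitrary $\mathbf s,\mathbf t$ with $\|\mathbf t-\mathbf s\|_\infty < 2^{-n}$, I would write each such point as a limit of a sequence of grid points at scales $n, n+1, n+2, \dots$ with successive displacements of size $\leq 2^{-k}$, and telescope: $|W(\mathbf t) - W(\mathbf s)|$ is bounded by $|W(\mathbf t)-W(\mathbf s')| + |W(\mathbf s')-W(\mathbf t')| + |W(\mathbf t')-W(\mathbf s)|$ for suitable grid points $\mathbf s',\mathbf t' \in G_n$ with $\|\mathbf s' - \mathbf s\|_\infty, \|\mathbf t' - \mathbf t\|_\infty \leq 2^{-n}$ (so $\mathbf s',\mathbf t'$ are within a bounded number of grid steps of each other), plus the tail sums $\sum_{k \geq n} \tfrac12 k 2^{-k/2}$ coming from the dyadic approximation of $\mathbf s$ and $\mathbf t$. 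The geometric-type tail $\sum_{k\geq n} k 2^{-k/2} = O(n 2^{-n/2})$ is of the right order, and absorbing all the constants into the factor $n$ (increasing $n_1$ to some $n_0$ if necessary) yields $|W(\mathbf t)-W(\mathbf s)| \leq n 2^{-n/2}$ for all $n \geq n_0$.

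The main obstacle is the precise variance estimate for increments of the Brownian sheet: unlike Brownian motion, $\mathrm{Var}(W(\mathbf t)-W(\mathbf s))$ is not simply $\|\mathbf t - \mathbf s\|$ but involves the measure of a union of thin slabs, and one must check carefully that on a bounded cube bounded away from the axes this quantity is indeed $O(\|\mathbf t-\mathbf s\|_\infty)$ with a constant depending only on the cube. Once this estimate is in hand, everything else is the standard Gaussian-process modulus-of-continuity machinery; in fact this is precisely the content of \cite[Theorem 2.4]{orey1973sample}, so in the body of the paper I would simply cite it, but the sketch above indicates how one would reprove it if needed.
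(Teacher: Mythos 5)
The paper itself gives no proof of this lemma: it is quoted verbatim as \cite[Theorem 2.4]{orey1973sample}, which is exactly what you say you would do in the text, so at that level the two treatments coincide; your sketch of the underlying argument (variance bound $\mathrm{Var}\bigl(W(\mathbf t)-W(\mathbf s)\bigr)\leq C(T)\,\|\mathbf t-\mathbf s\|_\infty$ on a compact cube, Gaussian tails, union bound over the dyadic grid, Borel--Cantelli, then chaining) is the standard route and is sound in outline.

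Two caveats, though, since the sketch as written does not quite deliver the stated bound. First, the closing step ``absorbing all the constants into the factor $n$ by increasing $n_0$'' fails: with the per-pair threshold $\tfrac12 k2^{-k/2}$, chaining gives roughly $3N$ adjacent-grid steps at scale $n$ plus two geometric tails $\sum_{k\geq n}\tfrac12 k2^{-k/2}$, i.e.\ a bound of the form $C_N\,n2^{-n/2}$ with $C_N>1$, and a multiplicative constant cannot be removed by enlarging $n_0$. The repair is immediate and stays inside your argument: run the grid estimate with threshold $\delta_N\,k2^{-k/2}$ for $\delta_N$ small (the tail $e^{-c\delta_N^2k^2}$ still beats the $2^{kN}$ union bound), or prove the sharper $C\sqrt{k}\,2^{-k/2}$ modulus and then exploit the genuine slack between $\sqrt n$ and $n$; but the constant bookkeeping must be done one of these ways. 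Second, reducing the statement to $[0,1]^N$ and claiming the general case ``by covering $\R_+^N$ with countably many cubes'' only produces a random rank per cube, not a single rank valid on all of $\R_+^N$ (the uniform statement over the whole quadrant is in fact false, since increment variances grow with the location of the cube); this is harmless for the paper, which only ever applies the lemma on a fixed compact rectangle, but the restriction to compacts should be stated explicitly.
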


\subsection*{Lemmas for Theorem \ref{thm0}}

Here we write some useful secondary Lemmas for the proof of Theorem \ref{thm0}. The notations are the one given in Section \ref{section_thm0}. The following can be directly proven from Lemma \ref{lem_OreyPruitt_original}.

\begin{lemma}
\label{lem_OreyPruitt}
There exist a rank $N$ (random) such that for every $n\geq N$, we have 
$$\left\{S^n_{ij} \text{ is good}\right\} \subset \left\{ |W(\mathbf{t})|<n2^{-n/2}, ~~ \forall \mathbf{t}\in S^n_{ij}\cap\Gamma^n_i\right\}. $$
\end{lemma}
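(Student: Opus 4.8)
The statement to prove is that there is a random rank $N$ such that for all $n \geq N$, whenever a tilted square $S^n_{ij}$ is good (i.e., $\Zz \cap S^n_{ij} \neq \emptyset$), one has $|W(\mathbf{t})| < n2^{-n/2}$ for every $\mathbf{t} \in S^n_{ij} \cap \Gamma^n_i$. The natural approach is to deduce this directly from the uniform modulus of continuity of the Brownian sheet given in Lemma \ref{lem_OreyPruitt_original}. First I would fix the random rank $n_0$ coming from Lemma \ref{lem_OreyPruitt_original}, so that for all $n \geq n_0$ one has $|W(\mathbf{t}) - W(\mathbf{s})| \leq n2^{-n/2}$ whenever $\|\mathbf{t} - \mathbf{s}\|_\infty < 2^{-n}$. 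I would then observe that, by construction in the Notations paragraph of Section \ref{section_thm0}, the tilted square $S^n_{ij}$ has side length comparable to $2^{-n}$ — in fact it sits inside the tube $p^{-1}(I^n_i)$ whose width is $2^{-n}$ — so its (euclidean, hence also sup-norm) diameter is bounded by a fixed constant times $2^{-n}$.

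The key step is then the following: if $S^n_{ij}$ is good, pick a point $\mathbf{z} \in \Zz \cap S^n_{ij}$, so that $W(\mathbf{z}) = 0$. For any $\mathbf{t} \in S^n_{ij} \cap \Gamma^n_i \subset S^n_{ij}$ we have $\|\mathbf{t} - \mathbf{z}\|_\infty \leq \operatorname{diam}(S^n_{ij}) \leq c\,2^{-n}$, and applying Lemma \ref{lem_OreyPruitt_original} (along a chain of at most a fixed number $\lceil c \rceil$ of intermediate points each within sup-distance $2^{-n}$, to absorb the constant $c$) gives $|W(\mathbf{t})| = |W(\mathbf{t}) - W(\mathbf{z})| \leq \lceil c \rceil\, n 2^{-n/2}$. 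To get the clean bound $n2^{-n/2}$ rather than $\lceil c\rceil\, n2^{-n/2}$, one absorbs the multiplicative constant by a harmless reindexing: replacing $n$ by $n + m$ for a fixed $m$ shrinks the relevant scale and, since $\lceil c\rceil\,(n+m) 2^{-(n+m)/2} < n 2^{-n/2}$ for $n$ large, one recovers the stated inequality for all $n$ past a (still random) rank $N \geq n_0$. Alternatively — and this is cleaner — one simply notes that the squares $S^n_{ij}$ can be taken to have diameter at most $2^{-n}$ by the freedom in their construction, so no constant appears at all and $N = n_0$ works verbatim.

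I do not expect any real obstacle here; the only point requiring a little care is bookkeeping the geometric constant relating $\operatorname{diam}(S^n_{ij})$ to $2^{-n}$ and confirming that $S^n_{ij} \cap \Gamma^n_i$ is genuinely contained in $S^n_{ij}$ (immediate, since $\Gamma^n_i$ is the central line of the tube and $S^n_{ij}$ is one of the tiles partitioning that tube). Everything else is a one-line application of Lemma \ref{lem_OreyPruitt_original}. The statement "directly proven from Lemma \ref{lem_OreyPruitt_original}" in the text is therefore accurate, and the proof is essentially the chaining argument above together with the trivial observation that a zero inside the square is within distance $O(2^{-n})$ of every point of the square.
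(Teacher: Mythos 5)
Your argument is precisely the intended one: the paper supplies no separate proof, saying only that the lemma ``can be directly proven from Lemma \ref{lem_OreyPruitt_original}'', and your chain of reasoning (a good square contains a zero $\mathbf{z}$, every $\mathbf{t}\in S^n_{ij}\cap\Gamma^n_i$ lies within $O(2^{-n})$ of $\mathbf{z}$ in sup-norm, then apply the modulus of continuity) is exactly that direct proof. The one wobble is the constant bookkeeping --- ``replacing $n$ by $n+m$'' does not literally cancel the factor $\lceil c\rceil$ coming from the diameter of $S^n_{ij}$ --- but this is immaterial: a bound of the form $Cn2^{-n/2}$ (or, equivalently, fixing the tiles' diameter to be at most $2^{-n}$, as you note) serves every subsequent use of the lemma just as well, since all later arguments are insensitive to such constants.
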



\begin{lemma}
\label{lem_BM_line}
If $p~:~\R^2\to\R$ is a linear map such that $\ker(p)\cap\dot{\R}^2_+\neq\{0\}$, then for every line $\Gamma^n_i$, there exist a Brownian motion $B^n_i$ such that the Brownian sheet along the line $\Gamma^n_i$ is equal to the Brownian motion $B^n_i$ up to a deterministic time change, \textit{ie} that if $\mathbf{t}=(t_1,t_2)\in\Gamma^n_i$,
$$W(\mathbf{t})=W(t_1,\alpha t_1+\beta^n_i)=B^n_i(t_1(\alpha t_1+\beta^n_i)).$$
\end{lemma}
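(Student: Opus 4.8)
The plan is to recognise the restriction of $W$ to the line $\Gamma^n_i$ as a deterministically time-changed Brownian motion by computing its covariance. Abbreviate $\beta:=\beta^n_i$ and parametrise $\Gamma^n_i$ by its first coordinate, setting $X(t_1):=W(t_1,\alpha t_1+\beta)$ for $t_1$ in the bounded interval such that $(t_1,\alpha t_1+\beta)\in\Gamma^n_i$. Since the finite-dimensional marginals of $W$ are centred Gaussian, $X$ is a centred, continuous Gaussian process (continuity coming from that of $W$ and of the affine parametrisation), so its law is determined by its covariance function.

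First I would compute that covariance. For $s_1\le t_1$ in the relevant interval, the assumption $\ker(p)\cap\dot{\R}^2_+\neq\{0\}$ forces $\ker p$, and hence $\Gamma^n_i$, to have positive slope, i.e.\ $\alpha>0$; therefore $\alpha s_1+\beta\le\alpha t_1+\beta$, and by \eqref{def_BS_1d},
\[
\E[X(s_1)X(t_1)] = \min(s_1,t_1)\,\min(\alpha s_1+\beta,\alpha t_1+\beta) = s_1(\alpha s_1+\beta).
\]
Writing $\varphi(t_1):=t_1(\alpha t_1+\beta)$, which on the interval in question is a strictly increasing continuous bijection onto its image (both factors $t_1$ and $\alpha t_1+\beta$ being positive and increasing there, the latter because $\Gamma^n_i\subset[1,2]^2$ and $\alpha>0$), the display reads $\E[X(s_1)X(t_1)]=\varphi(\min(s_1,t_1))=\min(\varphi(s_1),\varphi(t_1))$. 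This is exactly the covariance of $t_1\mapsto B(\varphi(t_1))$ for a standard Brownian motion $B$.

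To upgrade from equality in law to the pathwise identity in the statement, I would define the required process directly from $W$: set $B^n_i(u):=W\big(\varphi^{-1}(u),\,\alpha\varphi^{-1}(u)+\beta\big)$ for $u$ in $\varphi$ of the interval (and, if one insists on a globally defined process, extend to $\R_+$ by appending independent increments, which is irrelevant to the claim). By the covariance computation, $B^n_i$ is a centred Gaussian process with $\E[B^n_i(u)B^n_i(v)]=\min(u,v)$ on that interval, hence the restriction of a Brownian motion, and it is continuous since $W$ and $\varphi^{-1}$ are. By construction $W(t_1,\alpha t_1+\beta)=B^n_i(\varphi(t_1))=B^n_i\big(t_1(\alpha t_1+\beta)\big)$ for every $(t_1,\alpha t_1+\beta)\in\Gamma^n_i$, and $\varphi$ depends only on $\alpha$ and $\beta^n_i$, not on $\omega$, which is the assertion.

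There is essentially no real obstacle here; the only points that need care are the orientation of the two minima in the covariance (this is precisely where $\alpha>0$ enters), and the fact that on $[1,2]^2$ the process $B^n_i$ produced is a Brownian-motion segment not started at the origin rather than a standard Brownian motion, as already anticipated in Remark \ref{remark_renormalization}. For the quantitative addendum recorded there, one notes in addition that $\varphi'(t_1)=2\alpha t_1+\beta^n_i$ is bounded above and below by constants independent of $n$ and $i$, because the intercepts $\beta^n_i$ of lines meeting $[1,2]^2$ range over a bounded set; this yields the constants $m,M$ of Remark \ref{remark_renormalization}.
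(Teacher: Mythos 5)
Your argument is correct, but it takes a different route from the paper. The paper treats $t\mapsto W(t,\alpha t+\beta^n_i)$ (with $W$ viewed on all of $\R^2_+$) as a continuous martingale started at $0$ and invokes the Dubins--Schwarz theorem, reading off the time change as the quadratic variation $\langle\tilde W\rangle_t=t(\alpha t+\beta^n_i)$; you instead stay entirely within Gaussian-process language, computing the covariance $\E[X(s_1)X(t_1)]=\varphi(\min(s_1,t_1))=\min(\varphi(s_1),\varphi(t_1))$ with $\varphi(t_1)=t_1(\alpha t_1+\beta^n_i)$ and then defining $B^n_i:=X\circ\varphi^{-1}$ explicitly, so the pathwise identity holds by construction rather than being produced by a representation theorem. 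Your version is more elementary (no martingale machinery, no filtration for the sheet along an increasing path) and makes transparent exactly where the hypothesis $\ker(p)\cap\dot\R^2_+\neq\{0\}$, i.e.\ $\alpha>0$, is used (to orient the two minima consistently); it also works directly on the segment inside $[1,2]^2$ without worrying about whether the line exits $\R^2_+$ for small $t_1$ (relevant when $\beta^n_i<0$), whereas the Dubins--Schwarz route delivers at once a globally defined Brownian motion with the correct bracket. The one point where you are slightly loose is the extension of your process from the interval $\varphi(\text{segment})$ to a full Brownian motion on $\R_+$: extending \emph{backwards} toward $0$ requires gluing in an independent Brownian bridge (conditionally on the left endpoint value), not merely "appending independent increments"; this is standard and immaterial to how the lemma is used (as you note, Remark \ref{remark_renormalization} only needs a Brownian-motion segment with $m<|\varphi'|<M$), but it is worth stating precisely. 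Your closing observation that $\varphi'(t_1)=2\alpha t_1+\beta^n_i$ is bounded above and below uniformly in $n,i$ correctly supplies the constants of Remark \ref{remark_renormalization}.
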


\begin{proof}
In this proof, we are making an exception and we are considering that the Brownian sheet $W$ is defined on all $\R^2_+$.\\
Thanks to our assumption on $p$, we have $a>0$ and the process $(\tilde{W}_t)_{t\geq 0}$ defined by $\tilde{W}_t:=W(t,\alpha t+\beta^n_i)$ is obviously a martingale such that $\tilde{W}_0=0$. Thus, by Dubins-Schwarz theorem, we obtain that there exists a Brownian motion $B^n_i$ such that $\tilde{W}_t=B^n_i(\langle \tilde{W}\rangle_t)$. Finally, the result holds since indeed $\langle \tilde{W}\rangle_t=t(\alpha at+\beta^n_i)$.
\end{proof}

\begin{lemma}
\label{lem_Davis}
Let $B$ be a Brownian motion in $\R^2$, and take constants $0<r<R$. Define $\tau:=\tau_R\wedge\tau_r$ where $\tau_r:=\inf\{t\geq 0~:~|B(t)|=r\}$.\\
Then for every $a\in\R^2$ such that $r<|a|<R$, we have that $p_a:=\P_a(|B_\tau|=R)$ checks
$$p_a = \dfrac{\ln|a|-\ln r}{\ln R-\ln r}.$$
\end{lemma}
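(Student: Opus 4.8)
The plan is to exploit the fact that $x\mapsto h(x):=\ln|x|$ is harmonic on $\R^2\setminus\{0\}$ and to run an optional-stopping argument on the bounded martingale it produces. First I would observe that, since $B$ starts at $a$ with $|a|>r>0$ and has continuous paths, on the time interval $[0,\tau]$ the process stays in the closed annulus $\{r\le |x|\le R\}$, which does not contain the origin; hence $M_t:=h(B_{t\wedge\tau})=\ln|B_{t\wedge\tau}|$ is well defined and, by It\^o's formula together with $\Delta h=0$ away from $0$, is a local martingale. Because $\ln r\le M_t\le\ln R$ for all $t$, it is in fact a bounded, genuine martingale.

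Next I would check that $\tau<\infty$ almost surely, which is needed to pass to the limit. The cleanest way is to use that $|B_t|^2-2t$ is a martingale, so $\E_a[\,|B_{t\wedge\tau}|^2\,]=|a|^2+2\,\E_a[t\wedge\tau]$; since $|B_{t\wedge\tau}|^2\le R^2$, this gives $\E_a[t\wedge\tau]\le (R^2-|a|^2)/2<\infty$, and letting $t\to\infty$ by monotone convergence yields $\E_a[\tau]<\infty$, in particular $\tau<\infty$ a.s. Consequently $B_\tau$ is well defined and, by continuity of paths, lies on the boundary of the annulus, i.e. $|B_\tau|\in\{r,R\}$ a.s.

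Then I would apply the optional stopping theorem to the bounded martingale $(M_t)$ at the a.s.\ finite time $\tau$, getting
$$\ln|a| \;=\; \E_a[M_0] \;=\; \E_a[M_\tau] \;=\; \E_a\big[\ln|B_\tau|\big] \;=\; p_a\ln R + (1-p_a)\ln r,$$
where $p_a=\P_a(|B_\tau|=R)$ and $1-p_a=\P_a(|B_\tau|=r)$. Solving this linear equation for $p_a$ gives exactly $p_a=(\ln|a|-\ln r)/(\ln R-\ln r)$. (Equivalently, one could reduce by rotational invariance to the radial Bessel-$2$ process and solve the ODE $s''+s'/\rho=0$ for the scale function, finding $s(\rho)=\ln\rho$ and the same formula; I would keep the martingale version as it is shorter.) There is no real obstacle here beyond the two routine technical points already addressed: verifying $\tau<\infty$ a.s.\ (handled via the $|B_t|^2-2t$ martingale) and justifying optional stopping (immediate from the uniform bound $\ln r\le M_t\le\ln R$).
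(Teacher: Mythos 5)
Your proof is correct and takes essentially the same approach as the paper: both rest on the harmonicity of $\ln|\cdot|$ on the annulus and the identity $\ln|a|=\E_a[\ln|B_\tau|]=p_a\ln R+(1-p_a)\ln r$, solved for $p_a$. The only difference is that you establish the optional-stopping identity directly (including the a.s.\ finiteness of $\tau$), whereas the paper cites an external result for this step.
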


\begin{proof}
Since the function $\ln|\cdot|$ is harmonic on $\{a\in\R^2~:~r<|a|<R\}$ and $\tau$ is almost surely finite, we can use \cite[Thm 2.1]{davies} and deduce that 
$$\ln|a|=\E_a\ln|[B_\tau|]=p_a \ln R + (1-p_a)\ln r.$$
Thus, by isolating $p_a$ we get the desired result.
\end{proof}

\subsection*{Lemmas for Theorem \ref{thm1}}
Here we write some useful secondary Lemmas for the proof of Theorem \ref{thm1}. The notations are the one given in Section \ref{section_thm1}.\\
We begin with a Lemma that allows us to get a modulus of continuity as in Lemma \ref{lem_OreyPruitt} but uniformly in $K$.

\begin{lemma}
\label{lemma_parametrization}
Taking the notations from Remark \ref{remark_parametrization}, for every linear subspace $K\subset \R^N$, there exists $i_0\leq \binom{N}{N''}$ such that the parametrization $\Gamma_{i_0}$ is a $N$-lipschitz application.
\end{lemma}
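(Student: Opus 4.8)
The plan is to identify each parametrization $\Gamma_i$ with the inverse of the restricted orthogonal projection $q_i|_K\colon K\to V_i$, and then to choose the index $i_0$ for which this inverse has the smallest operator norm. Since $\Gamma_{i_0}$ is then linear, its Lipschitz constant equals $\|(q_{i_0}|_K)^{-1}\|_{\mathrm{op}}$, so it suffices to exhibit one index $i_0$ for which this norm is at most $N$.

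First I would fix an orthonormal basis $(u_1,\dots,u_{N''})$ of $K$ and form the $N\times N''$ matrix $U=(u_1\,|\,\cdots\,|\,u_{N''})$, so that $U^\top U=I_{N''}$; in the coordinates given by this basis, $q_i|_K$ is represented by the $N''\times N''$ submatrix $U_{k_i}$ of $U$ consisting of the rows indexed by $k_i$ (a vector $z=Uc\in K$ has $q_i(z)=U_{k_i}c$ in the basis $(e_j)_{j\in k_i}$ of $V_i$). By the Cauchy--Binet formula, $\sum_i \det(U_{k_i})^2=\det(U^\top U)=1$, so I may pick $i_0$ achieving $\max_i|\det U_{k_i}|$. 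In particular $\det U_{k_{i_0}}\neq 0$, hence $q_{i_0}|_K$ is an isomorphism, $\Gamma_{i_0}:=(q_{i_0}|_K)^{-1}$ is well defined and linear, and it satisfies $\Gamma_{i_0}(x)-x\in V_{i_0}^\perp$ since $q_{i_0}(\Gamma_{i_0}(x)-x)=x-x=0$; only one good index is needed, so there is no need to control the other $q_i|_K$.

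Then I would exploit the \emph{maximal volume} property of $A:=U_{k_{i_0}}$. Writing $B:=U_{k_{i_0}^c}$ for the $(N-N'')\times N''$ matrix of the remaining rows, a Cramer's-rule computation expresses each entry of $BA^{-1}$ as $\pm\det(U_k)/\det(U_{k_{i_0}})$ for the size-$N''$ subset $k=(k_{i_0}\setminus\{j\})\cup\{r\}$, which by maximality has absolute value $\le 1$; hence $\|BA^{-1}\|_{\mathrm{op}}\le\|BA^{-1}\|_{F}\le\sqrt{N''(N-N'')}$. On the other hand, from $U^\top U=A^\top A+B^\top B=I_{N''}$ one deduces the identity $(A^{-1})^\top A^{-1}=I_{N''}+(BA^{-1})^\top(BA^{-1})$, so $\|A^{-1}\|_{\mathrm{op}}^2=\lambda_{\max}\big((A^{-1})^\top A^{-1}\big)\le 1+N''(N-N'')\le N^2$. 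Therefore $\Gamma_{i_0}$ is $N$-Lipschitz, which proves the lemma (the cases $N''=N$, where $\Gamma=\mathrm{id}$, and $N=1$ being trivial).

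The only genuinely delicate point is the maximal-volume bound $\|BA^{-1}\|_{\max}\le 1$: one has to match, with correct sign bookkeeping, the cofactor expansion of "$A$ with its $j$-th row replaced by a row of $B$" against the determinant of an honest row-submatrix $U_k$ of $U$, and to invoke the maximality of $|\det U_{k_{i_0}}|$ over all size-$N''$ row subsets. Everything else — Cauchy--Binet, the identity $(A^{-1})^\top A^{-1}=I_{N''}+(BA^{-1})^\top(BA^{-1})$ (which uses crucially that the basis is orthonormal), and the elementary inequality $1+N''(N-N'')\le N^2$ for $1\le N''\le N$ — is routine. Note that the cruder bound obtained from Cauchy--Binet alone, namely $\|A^{-1}\|_{\mathrm{op}}\le\sqrt{\binom{N}{N''}}$, is \emph{not} enough since it can exceed $N$; the extra input from the maximal-volume property is what brings the constant down to $N$.
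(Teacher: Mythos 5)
Your proof is correct, and it reaches the stated constant $N$, but by a genuinely different route from the paper's. The paper argues geometrically and iteratively: it builds the index set $k_{i_0}$ one coordinate at a time, using the elementary observation that every vector of $\R^N$ makes an angle of cosine at least $1/\sqrt{N}$ with some coordinate axis, so that each basis vector $e_j\in V_{i_0}$ satisfies $|\Gamma_{i_0}(e_j)|\leq\sqrt{N}$, and then concludes by linearity and Cauchy--Schwarz that $|\Gamma_{i_0}(v)|\leq\sqrt{N}\sum_j|c_j|\leq N$ for unit $v$. You instead represent $q_i|_K$ by the row-submatrix $U_{k_i}$ of a matrix $U$ with orthonormal columns, pick $i_0$ of maximal volume (Cauchy--Binet guaranteeing $\det U_{k_{i_0}}\neq 0$), and bound $\|(U_{k_{i_0}})^{-1}\|_{\mathrm{op}}$ through the maximal-volume estimate $|(BA^{-1})_{rj}|\leq 1$ combined with the identity $(A^{-1})^\top A^{-1}=I+(BA^{-1})^\top(BA^{-1})$; all these steps check out (the Cramer/cofactor matching you flag is indeed the only delicate point, and it does hold up to a row permutation affecting only the sign), and you even obtain the slightly sharper bound $\sqrt{1+N''(N-N'')}\leq N$. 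Your remark that Cauchy--Binet alone only gives $\|A^{-1}\|_{\mathrm{op}}\leq\sqrt{\binom{N}{N''}}$, which can exceed $N$, correctly identifies why the maximal-volume input is needed. The bound is uniform in $K$, as required, and since the later uses of the lemma (Corollary \ref{cor_parametrization} and the continuity estimates) depend only on the Lipschitz constant and not on which $i_0$ is selected, your different choice of index is harmless. The trade-off is one of style: the paper's inductive construction is elementary and avoids determinant identities but its bookkeeping is harder to follow, whereas your argument outsources the work to standard linear algebra (Cauchy--Binet and the maxvol inequality) and is shorter and arguably more transparent.
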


\begin{proof}
First, one should note that for all vector $v\in\R^N$, there exist $i\leq N$ such that $|\cos(v,e_i)|\geq \frac{1}{\sqrt{N}}$. Thus, one can choose $i_0$ such that for every $e_j \in V_{i_0}$, we have $|\cos(\Gamma_{i_0}(e_j),e_j)|\geq \frac{1}{\sqrt{N}}$, which is equivalent to $|\Gamma_{i_0}(e_j)| \leq \sqrt{N}$, by the following iterative procedure :\\
For $0\leq n \leq N$ we note $\R^n:=\text{Vect}(e_1,\dots,e_n)$ and we proceed with an iteration on $1\leq n \leq N''$. In this iteration, the $f_i$'s will denote the vectors included in $V_{i_0}$, and the $f'_i$'s will denote the vectors that will not be included in $V_{i_0}$.
\begin{itemize}
    \item \textit{when} $n=1$ : If $\dim(R^n \cap K)>0$, note $f_1=e_1$, and otherwise note $f'_1=e_1$.
    \item \textit{when $n>1$ and the $f_i$'s and $f_i'$'s are given for $i\leq n-1$} :\\ If $\dim(\R^n \cap K)>\dim(\R^{n-1}\cap K)$, note $v$ the orthogonal projection of $e_n$ on $K\cap\R^n$ orthogonally to the $f_i$'s and $e_n$ in $\R^n$. Then define $f_n$ the element $e_i$ such that $|\cos(v,e_i)|\geq \frac{1}{\sqrt{N}}$. By construction $i\leq n$, and more specifically either $f_n=e_n$, either $f_n$ was in the $f'_i$'s, in which case we note $f'_n:=e_n$.\\
    If $\dim(\R^n \cap K) = \dim(\R^{n-1}\cap K)$, note $f'_n = e_n$.
\end{itemize}
After this iteration, since at each step $n$ we have $0\leq \dim(\R^n \cap K)-\dim(\R^{n-1}\cap K)\leq 1$, we have a family of $N''$ vectors $f_i$'s. If we note $i_0$ the number such that $V_{i_0} = \text{Vect}(f_i)$, we have that by construction $|\cos(\Gamma_{i_0}(f_i),f_i)|\geq \frac{1}{\sqrt{N}}$. Indeed, at every step $n$, if $f_n = e_n$ the orthogonal projection considered is the restriction to $\R^n$ of the orthogonal projection $\Gamma_{i_0}$ and we have by construction that $|\cos(v,f_n)|\geq \frac{1}{\sqrt{N}}$. While if $f_n$ was in the $f'_i$'s, $v$ is an orthogonal projection of $e_n$ on $K$ parallel to $f_n$ and $\Gamma_{i_0}(f_n)$ is an orthogonal projection of $f_n$ on $K$ parallel to $e_n$, thus the $\cos$ are the same and $|\cos(\Gamma_{i_0}(f_n),f_n)|\geq \frac{1}{\sqrt{N}}$ indeed also hold.\\
Finally, since $\Gamma_{i_0}$ is linear, we can deduce that for any $v:=\sum_{e_j\in V_{i_0}} c_j e_j$ such that $|v|^2=\sum_{e_j\in V_{i_0}} |c_j|^2=1$,
\begin{align*}
    \left| \Gamma_{i_0}(v) \right| &= \left|\sum_{e_j\in V_{i_0}} c_j \Gamma_{i_0}(e_j)\right| \leq \sum_{e_j\in V_{i_0}} |c_j| \left|\Gamma_{i_0}(e_j)\right| \leq \sqrt{N} \sum_{e_j\in V_{i_0}} |c_j| \leq N,
\end{align*}
where we used Hölder's inequality in the last inequality. 
\end{proof}

Using Lemma \ref{lem_OreyPruitt_original}, we can deduce from Lemma \ref{lemma_parametrization} the following result.
\begin{cor}
\label{cor_parametrization}
Taking the notations from Remark \ref{remark_parametrization}, there exists a rank $n_0$ (random) such that for every linear subspace $K$ of dimension $N''$, there exists $i_0\leq \binom{N}{\dim(K)}$ such that if $t,s \in V_{i_0}$ and $\|t-s\|_\infty \leq 2^{-n_0}$, we have
$$ |\tilde{W}_{i_0}(t) - \tilde{W}_{i_0}(s)| < n2^{-n/2}. $$
\end{cor}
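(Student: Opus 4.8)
The plan is to combine the uniform Lipschitz bound of Lemma \ref{lemma_parametrization} with the uniform modulus of continuity of the sheet from Lemma \ref{lem_OreyPruitt_original}, so that the subscript choice $i_0$ is essentially inherited from the parametrization lemma and the only new ingredient is a composition estimate. First I would invoke Lemma \ref{lem_OreyPruitt_original}: there is an almost surely finite random rank $n_0$ such that for every $n\ge n_0$ one has $|W(\mathbf u)-W(\mathbf v)|\le n2^{-n/2}$ whenever $\|\mathbf u-\mathbf v\|_\infty<2^{-n}$. This rank depends only on $W$, not on $K$. Next, for a given linear subspace $K$ of dimension $N''$, apply Lemma \ref{lemma_parametrization} to obtain an index $i_0\le\binom{N}{N''}$ for which $\Gamma_{i_0}\colon V_{i_0}\to K$ is $N$-Lipschitz (here the relevant point is that $N$ is a deterministic constant, uniform over all $K$).

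The key step is then the composition: if $t,s\in V_{i_0}$ with $\|t-s\|_\infty\le 2^{-n_0}$, then by the Lipschitz bound $\|\Gamma_{i_0}(t)-\Gamma_{i_0}(s)\|_\infty\le N\|t-s\|_\infty\le N2^{-n_0}$, so there is an integer $n$ with $2^{-n-1}\le N2^{-n_0}$ (equivalently $n\ge n_0-\log_2 N -1$) such that $\|\Gamma_{i_0}(t)-\Gamma_{i_0}(s)\|_\infty<2^{-n}$; applying Lemma \ref{lem_OreyPruitt_original} at this scale and unwinding $\tilde W_{i_0}=W\circ\Gamma_{i_0}$ gives $|\tilde W_{i_0}(t)-\tilde W_{i_0}(s)|\le n2^{-n/2}$. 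Since the map $2^{-n}\mapsto n2^{-n/2}$ is increasing in $2^{-n}$ for $n$ large, one can absorb the factor $N$ by enlarging $n_0$ at the outset (replace $n_0$ by $n_0+\lceil\log_2 N\rceil+1$), so that the bound can be restated cleanly in terms of the original scale as $|\tilde W_{i_0}(t)-\tilde W_{i_0}(s)|<n2^{-n/2}$ for the $n$ with $2^{-n}$ comparable to $\|t-s\|_\infty$. The crucial observation making this uniform is that both the rank $n_0$ (from Lemma \ref{lem_OreyPruitt_original}) and the Lipschitz constant $N$ (from Lemma \ref{lemma_parametrization}) are independent of $K$, so a single random rank works for all subspaces simultaneously.

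I expect the main obstacle to be purely bookkeeping rather than conceptual: namely keeping track of how the Lipschitz constant $N$ shifts the dyadic scale, and verifying that after this shift the modulus statement can still be written in the exact form $|\tilde W_{i_0}(t)-\tilde W_{i_0}(s)|<n2^{-n/2}$ with $n$ tied to $\|t-s\|_\infty\le 2^{-n}$ rather than to $\|\Gamma_{i_0}(t)-\Gamma_{i_0}(s)\|_\infty$. This is handled by monotonicity of $n\mapsto n2^{-n/2}$ for $n$ beyond a fixed threshold and by absorbing the constant into $n_0$; there is no probabilistic difficulty beyond the single application of Lemma \ref{lem_OreyPruitt_original}.
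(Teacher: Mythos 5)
Your proposal is correct and follows essentially the same route the paper intends: combine the $K$-independent random rank from Lemma \ref{lem_OreyPruitt_original} with the uniform $N$-Lipschitz parametrization of Lemma \ref{lemma_parametrization}, and absorb the Lipschitz constant into the rank $n_0$ by a dyadic shift. The bookkeeping you describe (monotonicity of $n\mapsto n2^{-n/2}$ and enlarging $n_0$ by $\lceil\log_2 N\rceil+1$) is exactly what makes the composed modulus take the stated form, so nothing is missing.
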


Finally, we need the following result.
\begin{lemma}
\label{lemme_tx}
Let $x>0$, and $T_x$ defined in \eqref{def_tx}. Noting the number of points in $T_x$ by $\# T_x$, we have
$$\#T_x \leq (r2^{n+1}x)^{N''} \qquad \text{and} \qquad \# T_0 \leq (2r)^{N''}.$$
\end{lemma}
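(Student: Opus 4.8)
The plan is to prove both estimates by a direct lattice-counting argument, exploiting the product structure of the dyadic cubes $D^n_j$ and the fact that all the points involved lie on one fixed coordinate grid.

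First I would make the grid explicit. The center $t^n_j$ of $D^n_j=\prod_{l=1}^{N''}[1+\tfrac{k_l}{2^n},1+\tfrac{k_l+1}{2^n})$ has $l$-th coordinate $1+\tfrac{2k_l+1}{2^{n+1}}$ with $0\le k_l\le 2^n-1$, so each coordinate of a point of $T$ ranges over $2^n$ equally spaced values and, for $t,s\in T$, $|t^l-s^l|$ is always an integer multiple of $2^{-n}$. Hence the quantity $M(t):=\max_{1\le l\le N''}\min_{s\in T_q}|t^l-s^l|$ is either $0$ or at least $2^{-n}$; this is precisely what makes $(T_{2^{-n+j}})_{j=0,\dots,n}\cup T_0$ the announced partition of $T-T_q$, with $T_0$ understood as $\{t\in T-T_q:M(t)=0\}$.

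Next I would bound $\#T_x$ in the only case that is used, $x=2^{-n+j}$ with $j\ge 0$. If $t\in T_x$ then $M(t)\le x$, so for \emph{each} coordinate $l$ there is some $s\in T_q$ with $|t^l-s^l|\le x$; thus the admissible values of $t^l$ lie in the union $\bigcup_{s\in T_q}[s^l-x,s^l+x]$ of at most $r$ intervals of length $2x$. On a grid of spacing $2^{-n}$ this union contains at most $r\,2^{n+1}x$ points (a one-line count, the additive endpoint correction being absorbed thanks to $x\ge 2^{-n}$; this dyadic normalisation is exactly why $x$ is taken of the form $2^{-n+j}$). Since a point of $T$ is determined by its $N''$ coordinates, taking the product over $l=1,\dots,N''$ yields $\#T_x\le(r\,2^{n+1}x)^{N''}$.

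The bound on $\#T_0$ is the degenerate case of the same argument: $t\in T_0$ means $M(t)=0$, so for each $l$ there is $s\in T_q$ with $s^l=t^l$, i.e. $t^l$ lies in the set $\{s^l:s\in T_q\}$ of at most $r$ values; multiplying over the $N''$ coordinates gives $\#T_0\le r^{N''}\le(2r)^{N''}$. I do not foresee any genuine difficulty here — the statement is purely combinatorial — and the only two points deserving a line of care are the grid observation of the first step (which also legitimises the partition invoked just after \eqref{def_tx}) and the harmless remark that the intervals $[s^l-x,s^l+x]$ may overlap, which can only decrease the count.
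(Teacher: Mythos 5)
Your proposal is correct and is essentially the paper's own argument in deterministic form: the paper bounds the probability that a uniformly drawn $t\in T$ lies in $T_x$ by a product of per-coordinate "favorable over total" counts (its "Laplace" step) and multiplies by $\#T=2^{nN''}$, which is exactly your per-coordinate lattice count of at most $r2^{n+1}x$ admissible grid values raised to the power $N''$, and your treatment of $T_0$ matches the paper's "similar reasoning". Your glossing of the $+1$ endpoint correction (which strictly gives a harmless extra factor such as $3^{N''}$, absorbed in the constant $C$ where the lemma is used) is no looser than the paper's own computation, so there is no genuine gap relative to it.
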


\begin{proof}
Suppose that we are uniformly drawing a $t\in T$ and we want to compute $\P(t\in T_x)$. By Laplace, since it's like asking for every coordinate $i=1,\dots,N''$ to be at most $x$-close from one of the $s\in T_q$, we have
$$\P(t\in T_x) \leq \left(\dfrac{2rx}{2^n}\right)^{N''}.$$
Thus, $\# T_x \leq (2rx)^{N''}$.\\
Similar reasoning can be followed to compute $\# T_0$.
\end{proof}

\subsection*{Lemmas for Theorem \ref{thm2}}
A straightforward computations leads to
\begin{lemma}
\label{lemma_gamma}
We have, for any $m\in \N$ and any family $(b_j)_{j\leq 2m}$ where $0<b_j<1$,
$$	
\int_{0=s_0\leq s_1\leq \dots \leq s_{2m}\leq 1}\prod_{j=1}^{2m}(s_j-s_{j-1})^{-b_j} d s_j =\frac{\prod_{j=1}^{2m}g(1-b_j)}{g(1+2m-\sum_{j=1}^{2m}b_j)},
$$
where $g$ is the Euler Gamma-function.
\end{lemma}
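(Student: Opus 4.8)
\textbf{Proof strategy for Lemma \ref{lemma_gamma}.} The identity is the classical Liouville--Dirichlet integral, and the plan is to reduce it to the standard solid-simplex form by a change of variables and then establish that form by induction. First I would perform the substitution $u_j := s_j - s_{j-1}$ for $j=1,\dots,2m$. This is a linear change of variables with lower-triangular matrix having $1$'s on the diagonal, hence of Jacobian $1$, and it maps the ordered region $\{0=s_0\leq s_1\leq\dots\leq s_{2m}\leq 1\}$ bijectively onto the solid simplex $\{u_j\geq 0,\ \sum_{j=1}^{2m}u_j\leq 1\}$, since $s_j=\sum_{i=1}^j u_i$. Therefore
$$\int_{0=s_0\leq s_1\leq \dots \leq s_{2m}\leq 1}\prod_{j=1}^{2m}(s_j-s_{j-1})^{-b_j}\,d s_j=\int_{\substack{u_j\geq 0\\ \sum_{j=1}^{2m}u_j\leq 1}}\prod_{j=1}^{2m}u_j^{-b_j}\,du_j.$$
Because $0<b_j<1$, every exponent $-b_j$ lies in $(-1,0)$, so all the integrals below converge absolutely and all the Gamma values occurring are finite and strictly positive.

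Second, I would prove by induction on $n\geq 1$ the slightly stronger identity: for any positive reals $a_1,\dots,a_n$ and $c$,
$$\int_{\substack{u_j\geq 0\\ \sum_{j=1}^{n}u_j\leq 1}}\prod_{j=1}^{n}u_j^{a_j-1}\Bigl(1-\sum_{j=1}^n u_j\Bigr)^{c-1}\,du_j=\frac{g(c)\prod_{j=1}^{n}g(a_j)}{g\bigl(c+\sum_{j=1}^{n}a_j\bigr)}.$$
The base case $n=1$ is the Euler Beta integral $\int_0^1 u^{a_1-1}(1-u)^{c-1}\,du=g(a_1)g(c)/g(a_1+c)$. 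For the inductive step I would integrate out the variable $u_n$ over the interval $[0,\,1-\sum_{j<n}u_j]$; the substitution $u_n=(1-\sum_{j<n}u_j)\,w$ converts this inner integral into $(1-\sum_{j<n}u_j)^{a_n+c-1}$ times the Beta integral $\int_0^1 w^{a_n-1}(1-w)^{c-1}\,dw=g(a_n)g(c)/g(a_n+c)$. Applying the inductive hypothesis in the remaining $n-1$ variables with $c$ replaced by $a_n+c$, and multiplying the two Gamma fractions, the factor $g(a_n+c)$ telescopes and one obtains exactly the claimed expression for $n$ variables.

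Finally, I would specialize to $n=2m$, $a_j=1-b_j\in(0,1)$ and $c=1$. Since $g(1)=1$ and $c+\sum_{j=1}^{2m}a_j=1+2m-\sum_{j=1}^{2m}b_j$, combining this with the change of variables from the first step yields precisely the asserted formula. The argument is entirely routine; the only point requiring a little care is that one must carry the boundary weight $\bigl(1-\sum u_j\bigr)^{c-1}$ in the induction hypothesis, since integrating out a single variable from the plain Dirichlet integral reintroduces such a weight and the naive statement without it does not close the induction.
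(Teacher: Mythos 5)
Your proof is correct: the reduction to the solid simplex via $u_j=s_j-s_{j-1}$ and the induction on the Dirichlet integral with the boundary weight $\bigl(1-\sum_j u_j\bigr)^{c-1}$, closed by the Euler Beta integral, is exactly the standard computation. The paper offers no proof at all (the lemma is introduced with ``a straightforward computation leads to''), so your argument simply supplies, correctly, the routine verification the authors had in mind.
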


\begin{lemma}
\label{lemma_borel_cantelli}
    Let $(A_n)_{n\geq 0}$ be a sequence of event from a certain filtered probability space $(\Omega,\Ff,(\Ff_n)_{n\geq 0},\P)$ with $A_n$ being measurable with respect to $\Ff_n$.\\
    Suppose that that there exists a constant $c>0$ such that there exists infinitely many $n\geq 0$ such that almost surely $\P(A_n~|~\Ff_n) > c$.\\
    Then $P(\limsup A_n)=1$.
\end{lemma}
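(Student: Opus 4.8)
The plan is to recognise this as a conditional, Lévy-type strengthening of the second Borel--Cantelli lemma and to establish it via a bounded-increment martingale argument. It suffices to show that almost surely $\sum_{n\geq 0}\1_{A_n}=\infty$. First I would form the compensated partial sums $M_k:=\sum_{n=0}^{k}\big(\1_{A_n}-\P(A_n\mid\Ff_n)\big)$; since $\E[\1_{A_n}-\P(A_n\mid\Ff_n)\mid\Ff_n]=0$, the sequence $(M_k)_k$ is a martingale with respect to $(\Ff_{k+1})_k$ (the index shift is the one-step delay already present in the way the lemma is applied), and its increments are bounded by $1$ in absolute value. The structural fact about such martingales that I will lean on is: $M_k$ converges almost surely on the event $\{\sup_k M_k<\infty\}$; I recall its proof at the end.

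Next I would carry out the usual dichotomy. The predictable quadratic variation satisfies $\langle M\rangle_\infty=\sum_n\P(A_n\mid\Ff_n)\big(1-\P(A_n\mid\Ff_n)\big)\leq\sum_n\P(A_n\mid\Ff_n)$, so on $\{\sum_n\P(A_n\mid\Ff_n)<\infty\}$ the martingale $M$ converges almost surely, and then $\sum_n\1_{A_n}=M_k+\sum_{n\leq k}\P(A_n\mid\Ff_n)$ has a finite limit there. Conversely, on $\{\sum_n\1_{A_n}<\infty\}$ one has $M_k=c(\omega)-\sum_{n\leq k}\P(A_n\mid\Ff_n)$ for all large $k$, so if simultaneously $\sum_n\P(A_n\mid\Ff_n)=\infty$ then $M_k\to-\infty$; but $\{M_k\to-\infty\}\subseteq\{\sup_k M_k<\infty\}$, where $M$ converges, a contradiction, so this event is null. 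Hence, up to a null set, $\{\sum_n\1_{A_n}=\infty\}=\{\sum_n\P(A_n\mid\Ff_n)=\infty\}$.

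Finally, the hypothesis gives the conclusion immediately: since $\P(A_n\mid\Ff_n)>c$ almost surely for infinitely many $n$, the series $\sum_n\P(A_n\mid\Ff_n)$ diverges almost surely, hence so does $\sum_n\1_{A_n}$, that is $\P(\limsup_n A_n)=1$. The only ingredient that is not routine bookkeeping is the quoted martingale fact; I would prove it by localisation: for $L>0$ set $\tau_L:=\inf\{k:M_k>L\}$, note that $L+1-M_{k\wedge\tau_L}$ is a non-negative supermartingale and therefore converges almost surely, so $M_k$ converges on $\{\tau_L=\infty\}=\{\sup_k M_k\leq L\}$, and let $L\to\infty$. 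I expect this localisation step, together with keeping the filtration indices consistent, to be the only genuinely delicate point; all the remaining estimates are elementary.
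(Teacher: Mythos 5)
Your argument is correct, and it is worth noting that the paper states Lemma \ref{lemma_borel_cantelli} in the Annex without any proof, so there is no in-paper argument to compare against; what you give is the standard L\'evy (conditional) second Borel--Cantelli lemma, which is exactly the strength needed in Section \ref{section_thm2_final}. The martingale construction $M_k=\sum_{n=0}^{k}\bigl(\1_{A_n}-\P(A_n\mid\Ff_n)\bigr)$, the localisation $\tau_L=\inf\{k: M_k>L\}$ making $L+1-M_{k\wedge\tau_L}$ a nonnegative (super)martingale, and the dichotomy showing that $\{\sum_n\P(A_n\mid\Ff_n)=\infty\}\subseteq\{\sum_n\1_{A_n}=\infty\}$ up to a null set are all sound; combined with the hypothesis that $\P(A_n\mid\Ff_n)>c$ a.s.\ along infinitely many $n$ (intersecting countably many a.s.\ events), this gives $\P(\limsup A_n)=1$. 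Two small remarks. First, the half of your dichotomy based on $\langle M\rangle_\infty<\infty$ is not needed for the conclusion and can be dropped. Second, you were right to flag the index shift: as literally written the lemma conditions an $\Ff_n$-measurable $A_n$ on $\Ff_n$, so $\P(A_n\mid\Ff_n)=\1_{A_n}$ and the statement is degenerate; the intended reading, which your ``one-step delay'' implements and which matches the application in Section \ref{section_thm2_final} (where $E_n\in\Ff_{n+1}$ and one conditions on $\Ff_n$), is that $A_n$ is $\Ff_{n+1}$-measurable and $\P(A_n\mid\Ff_n)>c$. Your proof does require this measurability (so that $M_k$ is $(\Ff_{k+1})$-adapted), so it would be worth stating that corrected hypothesis explicitly rather than leaving it implicit.
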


\newpage
\bibliography{reference}

\end{document}